\documentclass[11pt,reqno]{amsart}

\usepackage[T1]{fontenc}
\usepackage[utf8]{inputenc}
\usepackage{mathpazo}
\usepackage{microtype}
\usepackage[a4paper,margin=31mm]{geometry}
\usepackage{amsmath,amssymb,amsthm,mathtools}
\usepackage{booktabs}
\usepackage{tabularx}
\usepackage{array}
\usepackage{ragged2e}
\usepackage{float}
\usepackage{graphicx}
\usepackage{esint}

\newcolumntype{Y}{>{\RaggedRight\arraybackslash}X}

\usepackage{enumitem}
\usepackage{xcolor}
\usepackage[colorlinks=true,linkcolor=blue!45!black,
  citecolor=blue!45!black,urlcolor=blue!45!black]{hyperref}
\usepackage[nameinlink,noabbrev]{cleveref}

\definecolor{darkred}{rgb}{0.7,0.1,0.1}

\allowdisplaybreaks[2]
\numberwithin{equation}{section}
\setlist{itemsep=2pt,topsep=4pt}

\newtheorem{theorem}{Theorem}[section]
\newtheorem{proposition}[theorem]{Proposition}
\newtheorem{lemma}[theorem]{Lemma}
\newtheorem{corollary}[theorem]{Corollary}
\newtheorem{definition}[theorem]{Definition}
\theoremstyle{remark}
\newtheorem{remark}[theorem]{Remark}

\newcommand{\R}{\mathbb R}
\newcommand{\Om}{\Omega}
\newcommand{\dd}{\,\mathrm d}
\newcommand{\cA}{\mathcal A}
\newcommand{\cR}{\mathcal R}
\newcommand{\esssup}{\operatorname*{ess\,sup}}
\newcommand{\essinf}{\operatorname*{ess\,inf}}
\newcommand{\med}{\operatorname{med}}
\newcommand{\supp}{\operatorname{supp}}
\newcommand{\Wulff}{\mathcal W}

\title[Anisotropic gradient rearrangement estimates]
{Gradient rearrangement estimates for singular and degenerate
anisotropic $p$-Laplace equations}

\author{Lu Chen}
\address{Key Laboratory of Algebraic Lie Theory and Analysis of Ministry of Education,
School of Mathematics and Statistics,
Beijing Institute of Technology,
Beijing 100081, P.~R.~China}
\email{chenlu5818804@163.com}

\author{Daniel Hauer}
\address{Institute of Mathematics,
Brandenburg University of Technology Cottbus--Senftenberg,
Platz der Deutschen Einheit 1,
03046 Cottbus, Germany}
\email{daniel.hauer@b-tu.de}

\address{School of Mathematics and Statistics,
The University of Sydney,
NSW 2006, Australia}
\email{daniel.hauer@sydney.edu.au}

\author{Yabo Yang}
\address{Key Laboratory of Algebraic Lie Theory and Analysis of Ministry of Education,
School of Mathematics and Statistics,
Beijing Institute of Technology,
Beijing 100081, P.~R.~China}
\email{Yabo\_Yang0927@outlook.com}

\date{26 Aug 2026}

\subjclass[2020]{35J25, 35B65, 35J92, 46E30}
\keywords{anisotropic $p$-Laplacian, singular elliptic equation, SOLA, gradient rearrangement, Lorentz space,
Dirichlet problem, Neumann problem}

\begin{document}

\begin{abstract}

In this paper, we prove global pointwise estimates on the decreasing rearrangement $u^{\ast}$ and $[H(\nabla u)]^{\ast}$ of SOLAs $u$ of an elliptic Poisson equation driven by the anisotropic $p$-Laplace operator $\Delta_p^H$ for given $L^1$ data and equipped with either homogeneous Dirichlet or Neumann boundary conditions. Here, we assume that the dimension $n\ge 2$, $1<p<n$ and $H$ is norm on $\R^n$. In the degenerate range $2 \leq p <n$,
this gives an anisotropic extension and a full-interval completion of the
rearrangement theory of Cianchi-Maz'ya [JEMS, 2014]. In particular, we obtain a new rearrangement estimate for gradients by decomposing the data for singular range $1<p<2$, where the nonlinear interpolation method aren't available. Moreover, the endpoint weak-type gradient estimate is obtained for all $1<p<n$. In the range $p_*(n)<p<n$ with $p_*(n):=\frac{3n-2}{2n-1}$, we prove a stability estimate for differences of gradients. These results extends the known ones in the literature to the singular range $p_*(n)<p<2$.

\end{abstract}

\maketitle

\section{Introduction}\label{sec:introduction}

Let $\Omega$ be a bounded domain in $\R^n$, $n\ge 2 $, and $H$ be a norm on $\R^n$. Then, our aim is to establish point-wise estimates on the decreasing rearrangement $[H(\nabla u)]^*$ of the gradient $\nabla u$ of
(approximable) solutions $u$ to the anisotropic Poisson equation
\begin{equation}
    \label{eq:Poisson}
 -\operatorname{div}\!\left(
 H(\nabla u)^{p-1}\nabla H(\nabla u)
 \right)+\beta (u) = f
 \qquad\text{in }\Omega
\end{equation}
satisfying either homogeneous Neumann or Dirichlet boundary conditions for given datum $f\in L^1(\Omega)$ and a given monotone (increasing) continuous function $\beta:\,\R\to \R$ satisfying $\beta(0)=0$ and certain growth conditions. The decreasing rearrangement $[H(\nabla u)]^*(r)$ for $r>0$ records the size of the anisotropic gradient $H(\nabla u)$ 
independently of its location. Thus, pointwise estimate for this
one-dimensional function is considerably more informative than a single
norm inequality: it simultaneously yields global regularity bounds in Lebesgue, Lorentz,
Marcinkiewicz, or Orlicz spaces. Obtaining such an estimate on the whole
interval $(0,|\Omega|)$, and up to the boundary of $\Omega$, is the main
purpose of this paper.

Rearrangement methods have a long history in elliptic boundary value problems.
Talenti's comparison principles \cite{Talenti1976,Talenti1979} compare the symmetric rearrangement of a
solution with the radial solution of a symmetrized problem and lead to sharp estimates
for the solution itself. In the anisotropic setting, Euclidean balls are replaced by
Wulff shapes, and convex symmetrization was developed in \cite{AlvinoFeroneLionsTrombetti1997}; related geometric
properties of anisotropic operators can be found in \cite{BelloniFeroneKawohl2003,CianchiSalani2009}. Fully anisotropic problems
with minimally integrable data were considered in \cite{AlbericoEtAl2019}. For Neumann problems,
ordinary isoperimetry has to be replaced by relative isoperimetry, and a median is the
natural normalization; see \cite{ACMM,Mazya1969}.

Pointwise rearrangement estimates for gradients in problems with merely integrable data were already obtained by Alvino-Ferone-Trombetti \cite{AlvinoFeroneTrombetti2000}. For homogeneous Dirichlet problems involving a broad class of nonlinear elliptic operators, they worked in the framework of entropy solutions and combined truncation estimates, level-set and coarea arguments, the isoperimetric inequality to derive an explicit pointwise bound for \(|\nabla u|^*\) in terms of \(f^*\). Building on this level-set and rearrangement strategy, Cianchi-Maz'ya \cite{CianchiMazya2014} subsequently obtained the sharper and more transparent estimate for \(p\)-Laplace type equations in the degenerate range \(2\leq p<n\). In particular, their estimate is naturally formulated in terms of \((|\nabla u|^*)^{p-1}\), has a linear Hardy-type dependence on \(f^*\), and applies to both homogeneous Dirichlet and Neumann problems. The present paper continues this line of research by extending gradient rearrangement estimates to anisotropic \(p\)-Laplace equations, with particular extend the range of $p$ to the singular range \(1<p<2\).

The gradient is substantially more delicate than the solution. A comparison principle
for $u$ does not, in general, differentiate into a pointwise comparison for $\nabla u$.
Gradient estimates must instead be extracted from truncations, level-set identities,
coarea formulas, isoperimetric inequalities, and endpoint bounds; see, for instance, \cite{AlvinoFeroneLionsTrombetti1997}. The low integrability
of the datum is a genuine source of difficulty. In general,
\[
 L^1(\Omega)\not\subset W^{-1,p'}(\Omega),
\]
and a solution need not belong to the natural energy space. Thus the
equation cannot always be understood through the standard weak formulation.
We work instead with approximable solutions (define as SOLA), obtained as limits of solutions
with bounded data. This framework, developed in
\cite{BenilanEtAl1995,BoccardoGallouet1989,DalMasoEtAl1999,DallAglio1996},
retains the almost-everywhere convergence of gradients needed in the
level-set argument.

A benchmark for the present work is the celebrated paper of Cianchi and Maz'ya \cite{CianchiMazya2014}.
For the isotropic equation
\[
 -\operatorname{div}\!\big(a(|\nabla u|)\nabla u\big)=f
\]
with $p$-growth in the degenerate range $2\le p<n$, they obtained a universal
rearrangement estimate under homogeneous Dirichlet or Neumann boundary conditions
and under borderline assumptions on the geometry of the domain. With
$m=|\Omega|$, $n'=n/(n-1)$, and
$f^{**}(r)=r^{-1}\int_0^r f^*(\rho)\,d\rho$, their main estimate is stated as
\begin{equation}\label{eq:cianchi-mazya-theorem}
 |\nabla u|^*(s)^{p-1}
 \le C\int_s^m f^{**}(r)r^{-\frac{1}{n'}}\,dr,
 \qquad 0<s<m.
\end{equation}
Its decisive feature is that it is independent of any prescribed function space. It
reduces gradient estimates in rearrangement-invariant norms to one-dimensional Hardy inequalities and thereby yields applications in Lebesgue, Lorentz, Lorentz--Zygmund, and Orlicz spaces. In particular, \cite{CianchiMazya2014} shows that the nonlinear
quantity $|\nabla u|^{p-1}$ obeys the same integrability pattern as the gradient in the linear theory.

Our results complement and sharpen this theory in three distinct directions. First, we
replace the Euclidean constitutive law by the genuinely anisotropic vector field
\[
 A_H(\xi)=H(\xi)^{p-1}\nabla H(\xi),
\]
and treat both the Dirichlet and Neumann problems. This requires
anisotropic perimeter, relative anisotropic isoperimetry, and quantitative convexity of
$H^2$; it is not a formal change of notation. Second, and more substantially, we cover
the entire singular interval $1<p<2$, where the difference estimates and nonlinear
interpolation used in the degenerate regime are no longer available. Third, our result
identifies the form of the Hardy tail that remains valid on the whole interval
$(0,m)$.

The last point deserves an explicit explanation. Put
\[
 A_f(s)=\int_0^s f^*(r)\,dr,
 \qquad
 B_f(s)=\int_s^m f^*(r)r^{-\frac{1}{n'}}\,dr.
\]
A direct application of Fubini theorem gives
\[
 \int_s^m f^{**}(r)r^{-\frac{1}{n'}}\,dr
 =n'\Big(s^{-\frac{1}{n'}}A_f(s)+B_f(s)-m^{-\frac{1}{n'}}\|f\|_{L^1(\Omega)}\Big).
\]
By contrast, if $f^*$ is extended by zero to $(m,\infty)$ and the corresponding
maximal rearrangement is denoted by $\widetilde f^{**}$, then
\begin{align}\label{eq:intro-classical}
 \int_s^\infty \widetilde f^{**}(r)r^{-\frac{1}{n'}}\,dr
 =n'\big(s^{-\frac{1}{n'}}A_f(s)+B_f(s)\big).
\end{align}
The two tails are uniformly comparable away from the terminal endpoint, for example
on $0<s\le m/2$, but they are not uniformly comparable as $s\uparrow m$. In fact, an
explicit solution with constant datum on a Wulff ball shows that the unshifted
finite-tail expression in \eqref{eq:intro-classical} decays too rapidly to control the gradient all the way to $s=m$. Thus, in the degenerate isotropic case, our theorem has the same function-space consequences as \cite{CianchiMazya2014}, while its two-term form supplies the full-interval formulation. This distinction is invisible in the usual
small-$s$ estimates but essential for a genuinely global pointwise rearrangement inequality. Moreover, the two terms in \eqref{eq:intro-classical}
reflect two endpoint phenomena: the weak-type estimate associated with
$L^1$ data and the global Lipschitz estimate associated with data in the Lorentz space $L^{n,1}$. The relevant global Lipschitz theory
was developed in \cite{CianchiMazya2011,CianchiMazya2014Systems}. In the
degenerate regime, the full rearrangement inequality can be recovered from
these endpoints through nonlinear interpolation and explicit formulas for
the relevant $K$-functionals, readers may refer to
\cite{BennettSharpley,Holmstedt1970,Tartar1972}.

Recent results have supplied important components of the corresponding
anisotropic theory. Interior gradient estimates and global second-order
estimates were obtained in
\cite{AntoniniCiraoloFarina2023,AntoniniEtAl2025}. Most importantly for
the present problem, Antonini and Cianchi \cite{AntoniniCianchi2025}
established the anisotropic gradient endpoint estimate of $L^{n,1}$ and
$L^\infty$ for every $p>1$, both on convex domains and under the borderline
boundary condition described above. The results of Dong-Zhu \cite{DongZhu2024} are formulated in terms of nonlinear potentials of the
measure datum and yield pointwise gradient bounds and continuity estimates.
To the best of our knowledge, however, a global estimate for the complete
decreasing rearrangement of the gradient, simultaneously covering
homogeneous Dirichlet and Neumann problems, has not previously
been available in the anisotropic setting.

There are two nonformal obstructions to such an extension. The first is
geometric: the proof must use anisotropic perimeter, relative anisotropic
isoperimetry, and the quantitative convexity of $H^2$. The second is
specific to the singular range $1<p<2$. If
\[
 \cA_{H}(\xi)=H(\xi)^{p-1}\nabla H(\xi),
\]
then for $p\geq2$ one has
\[
 \big(\cA_{H}(\xi)-\cA_{H}(\eta)\big)\cdot(\xi-\eta)
 \gtrsim H(\xi-\eta)^p,
\]
whereas for $1<p<2$ the natural coercivity inequality becomes
\begin{equation}\label{eq:intro-weighted}
 \big(\cA_{H}(\xi)-\cA_{H}(\eta)\big)\cdot(\xi-\eta)
 \gtrsim
 [H(\xi)+H(\eta)]^{p-2}H(\xi-\eta)^2.
\end{equation}
Consequently, the difference of two gradients can no longer be controlled
independently of the two background solutions.

There is also a structural obstruction to continuing the degenerate
interpolation argument. The gradient operator 
$T:f\mapsto\nabla u_f$ of the solution mapping $f\mapsto u_f$ has degree of homogeneity $\frac{1}{p-1}$. If $p<2$,
this degree is larger than one, and an estimate of the form
\[
 \|T(f)-T(g)\|_X
 \leq
 C\|f-g\|_{L^1(\Omega)}^{\frac{1}{p-1}}
\]
would contradict homogeneity along every nonzero ray. Thus the stability
estimate underlying the interpolation argument in
\cite{CianchiMazya2014} cannot hold in the singular regime.

Our results may be summarized in the following principles.

\begin{itemize}

\item A global rearrangement estimate for $H(\nabla u)$ holds throughout
the full interval $1<p<n$, for both homogeneous Dirichlet and Neumann boundary conditions.

\item The endpoint weak-type estimate
\[
 T:\; L^1(\Omega)
 \longrightarrow
 L^{\frac{n(p-1)}{n-1},\infty}(\Omega)
\]
remains valid for every $1<p<n$.

\item In a subrange of $1<p<2$, differences of gradients satisfy a
background-dependent stability estimate with the precise homogeneity
dictated by the equation.
\end{itemize}

Our proof separates the estimate for one solution from estimates for differences of
solutions. A direct level-set argument first gives the weak endpoint bound for a single
solution for every $1<p<n$. It uses only the identity
\[
 \cA_H(\nabla u)\cdot\nabla u=H(\nabla u)^p,
\]
and subadditivity of rearrangement inequalities
\[
(a+b)^*(s)
\leq a^*\left(\frac{s}{2}\right)+b^*\left(\frac{s}{2}\right),
\qquad 0<s<|\Omega|.
\] 
Hence avoids the singular difference obstruction. This argument covers the whole
singular interval and does not rely on a background-free stability estimate. Separately,
in a subrange of $1<p<2$, we establish a quantitatively homogeneous difference
estimate by combining Lorentz--Sobolev embedding.\medskip

We now introduce the problems and state the main results. Let $1<p<n$, and
let $H:\R^n\to[0,\infty)$ be an even, uniformly convex norm satisfying the
regularity assumptions stated in \cref{section2}. Set
\[
 \cA_{H}(\xi)=H(\xi)^{p-1}\nabla H(\xi),
 \qquad
 \Delta_p^H u=\operatorname{div}\cA_{H}(\nabla u),
\]
with $\cA_{H}(0)=0$. We consider the Poisson equation
\begin{equation}\label{eq:poisson}
 -\Delta_p^H u +\beta (u)= f
 \qquad\text{in }\Omega
\end{equation}
equipped with homogeneous Dirichlet boundary conditions
\begin{equation}\label{eq:dirichlet}
 u=0
 \qquad\text{on }\partial\Omega.
\end{equation}
Furthermore, we consider in the case $\beta\equiv 0$, the Poisson equation \eqref{eq:poisson} equipped with homogeneous Neumann boundary conditions
\begin{equation}\label{eq:neumann}
 \cA_{H}(\nabla u)\cdot\nu=0
 \qquad\text{on }\partial\Omega.
\end{equation}
In this case, the datum $f\in L^{1}(\Omega)$ needs to satisfy the \emph{compatibility condition}
\begin{displaymath}
 \int_\Omega f\,\dd x=0.
\end{displaymath}

For $f\in L^1(\Omega)$, put
\[
 m=|\Omega|,
 \qquad
 A_f(s)=\int_0^s f^*(r)\,\dd r,
 \qquad
 B_f(s)=\int_s^m f^*(r)r^{-\frac{1}{n'}}\,\dd r.
\]

\begin{theorem}\label{thm:main}
Let $1<p<n$, let $f\in L^1(\Omega)$, and let $u$ be the SOLA of \eqref{eq:poisson}, with either \eqref{eq:dirichlet}
or \eqref{eq:neumann}. Then, for every $0<s<m$,
\begin{equation}\label{eq:main-estimate}
 \big([H(\nabla u)]^*(s)\big)^{p-1}
 \leq C\cR_p f(s),
\end{equation}
where
\begin{equation}\label{eq:Rp}
 \cR_p f(s)=
 \begin{cases}
 s^{-\frac{1}{n'}}A_f(s)+B_f(s),
     & n\ge 2,\qquad 2\leq p<n,\\[2mm]
 s^{-\frac{1}{n'}}\|f\|_{L^1(\Omega)}^{2-p}
 A_f(s)^{p-1}+B_f(s),
     &n\ge 3,\qquad 1<p<2.
 \end{cases}
\end{equation}
The first term in the second line is understood to be zero when $f=0$.
The constant $C$ depends only on $n,p,\lambda,\Lambda$, and the geometric
parameters of $\Omega$; in the Neumann case it may also depend on a relative
anisotropic isoperimetric constant.
\end{theorem}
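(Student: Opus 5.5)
We plan to prove \eqref{eq:main-estimate} by splitting the datum at the level $s$ and treating the two pieces with two endpoint estimates. Fix $0<s<m$ and choose a measurable set $E_s\subset\Omega$ with $|E_s|=s$ on which $|f|\ge f^*(s)$, so that $f_1:=f\chi_{E_s}$ and $f_2:=f\chi_{\Omega\setminus E_s}$ satisfy $\|f_1\|_{L^1}=A_f(s)$ and $f_2^*\le f^*(r+s)$, so that $f_2^*(r)\le f^*(s)$. In the Neumann case we subtract averages, $f_i-\fint_\Omega f_i$, to keep the compatibility condition; this costs a term controlled by $m^{-1}A_f(s)$, which is dominated by $s^{-1/n'}A_f(s)$ up to a constant depending on $m$. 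The two endpoint ingredients are: (a) the weak-type bound $[H(\nabla v)]^*(\sigma)^{p-1}\le C\sigma^{-1/n'}\|g\|_{L^1}$ for the SOLA $v$ with datum $g$, valid for all $1<p<n$; and (b) the global Lipschitz bound $\|H(\nabla w)\|_{L^\infty}^{p-1}\le C\|g\|_{L^{n,1}}$ of Antonini--Cianchi \cite{AntoniniCianchi2025}, valid under the stated geometric assumptions and in both Dirichlet and Neumann settings. Note that $\|f_2\|_{L^{n,1}}\lesssim B_f(s)+s^{1/n}f^*(s)\lesssim B_f(s)+s^{-1/n'}A_f(s)$.

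Step (a) is proved by the level-set method. Test the approximating problems with $T_{t+h}(u)-T_t(u)$ (monotonicity of $\beta$ lets us discard the lower-order term) and let $h\to0$. This gives $-\frac{d}{dt}\int_{\{|u|>t\}}H(\nabla u)^p\le\|g\|_{L^1}$. Combining this with the coarea formula, Hölder's inequality, and the anisotropic (relative, in the Neumann case, applied on levels around the median) isoperimetric inequality $P_H(\{|u|>t\})\ge c\,\mu(t)^{1/n'}$ yields the Marcinkiewicz bound for $u$. The standard Bénilan et al.\ argument then converts this into a bound on $|\{H(\nabla u)>\lambda\}|$, namely $\lambda^{-n(p-1)/(n-1)}\|g\|_{L^1}^{n/(n-1)}$, which is exactly (a). Passage to the SOLA uses a.e.\ convergence of gradients.

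The case $p\ge2$ combines the two pieces through a stability estimate. Let $u_1$ and $u$ be the SOLAs with data $f_1$ and $f$, and put $w=u-u_1$. Strong monotonicity $(\cA_H(\xi)-\cA_H(\eta))\cdot(\xi-\eta)\gtrsim H(\xi-\eta)^p$ gives a difference version of (a). Interpolating it against (b) via Cianchi--Maz'ya's $K$-functional argument, as in \cite{CianchiMazya2014}, controls $H(\nabla u-\nabla u_1)$ by $C\|f_2\|_{L^{n,1}}^{1/(p-1)}$ in $L^\infty$ up to $L^1$ contributions. Subadditivity then gives
\[
[H(\nabla u)]^*(s)\le[H(\nabla u_1)]^*(s/2)+[H(\nabla u-\nabla u_1)]^*(s/2),
\]
and inserting (a) for the first term and the Lipschitz part for the second yields $s^{-1/n'}A_f(s)+B_f(s)$.

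The case $1<p<2$ is the main obstacle, because monotonicity is only the weighted form \eqref{eq:intro-weighted}. Here we test the difference equation with truncations of $w$ and use Hölder's inequality with the weight $[H(\nabla u)+H(\nabla u_1)]^{p-2}$:
\[
\int H(\nabla w)^p\lesssim\Big(\int [H(\nabla u)+H(\nabla u_1)]^{p-2}H(\nabla w)^2\Big)^{p/2}\Big(\int[H(\nabla u)+H(\nabla u_1)]^p\Big)^{(2-p)/2}.
\]
The background factor is bounded using (a) for $f$ itself. This is what produces the factor $\|f\|_{L^1}^{2-p}$ multiplying $A_f(s)^{p-1}$. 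The weak Lorentz norm of $H(\nabla w)$ is then estimated by level sets of $w$ together with the Lorentz--Sobolev embedding; the restriction $n\ge3$ is needed for the integrability exponents to close. The $B_f$ term comes from applying (b) to $u_2$ with datum $f_2$, and $u$ is compared to $u_2$ rather than to $u_1$, so the singular difference is charged only to the $L^1$-small piece $f_1$. The delicate point is matching homogeneities so that the final form is $\|f\|^{2-p}A_f^{p-1}$; we expect to verify this by scaling $f\mapsto\lambda f$.
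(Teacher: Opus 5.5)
Your splitting of $f$ and the two endpoint inputs (a) and (b) are fine. The way you combine them, however, has a gap in each range, and the one for $1<p<2$ is substantive.

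\textbf{The range $2\le p<n$.} Here you compare in the wrong direction. You take $u_1$ to be the solution for the $L^1$ piece $f_1=f\chi_{E_s}$, so you need $\|H(\nabla u-\nabla u_1)\|_{L^\infty}^{p-1}\lesssim\|f_2\|_{L^{n,1}}$. That is an $L^{n,1}\to L^\infty$ \emph{stability} estimate for differences of gradients. Estimate (b) concerns a single solution and gives nothing for $\nabla u-\nabla u_1$, where both gradients are unbounded. The $L^1$ difference estimate would only give $s^{-1/n'}\|f_2\|_{L^1}=s^{-1/n'}\int_s^m f^*(r)\dd r$. This dominates $B_f(s)$ and blows up as $s\to0$ even for bounded $f$.

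The Cianchi--Maz'ya interpolation uses the opposite split, and so does the paper. You should compare $u$ with the solution $u_2$ of the $L^{n,1}$ piece. Then (b) gives $\|H(\nabla u_2)\|_{L^\infty}^{p-1}\lesssim\|f_2\|_{L^{n,1}}\lesssim s^{-1/n'}A_f(s)+B_f(s)$. The degenerate difference estimate (Proposition~\ref{prop:degenerate-stability}) gives $[H(\nabla u-\nabla u_2)]^*(s/2)^{p-1}\lesssim s^{-1/n'}\|f-f_2\|_{L^1}=s^{-1/n'}A_f(s)$. You already use this orientation for $p<2$, so this part is repairable.

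\textbf{The range $1<p<2$.} Your H\"older step puts $\int_\Omega(H(\nabla u)+H(\nabla u_2))^p\dd x$ on the right-hand side, and this is infinite in general. Indeed $L^1\not\subset W^{-1,p'}$, and (a) only gives $H(\nabla u)\in L^{q,\infty}$ with $q=\frac{n(p-1)}{n-1}<p$.

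You could repair this with Lorentz--H\"older against the weak-$L^q$ background, followed by Lorentz--Sobolev. That is exactly the route of Theorem~\ref{thm:singular-stability}. It requires $\frac{2n(p-1)}{n+p-2}>1$, i.e.\ $p>p_*(n)$, and $n\ge3$ does not remove this restriction. So the range $1<p\le p_*(n)$ would be lost.

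The missing idea is to use only that the background gradient is bounded.
\begin{itemize}
\item Set $L=\|H(\nabla u_2)\|_{L^\infty}$. Then $H(\nabla u)+H(\nabla u_2)\le2(H(\nabla w)+L)$.
\item The weighted monotonicity therefore yields coercivity $\gtrsim\Psi_L(H(\nabla w))=H(\nabla w)^2(L+H(\nabla w))^{p-2}$. This involves only $w$ and the constant $L$ (Lemma~\ref{lem:bounded-coercivity}).
\item The level-set/isoperimetric argument then gives the two-phase inequality $-\mu'\gtrsim\min\{M^{-\frac{1}{p-1}}\mu^{p'/n'},\,M^{-1}L^{p-2}\mu^{2/n'}\}$. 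This is where $n\ge3$ is actually used, to have $2/n'>1$.
\item The rank-weighted test function and modular inversion then give $[H(\nabla w)]^*(s)^{p-1}\lesssim s^{-\frac{1}{n'}}M+s^{-\frac{p-1}{n'}}M^{p-1}L^{(p-1)(2-p)}$ with $M=A_f(s)$ (Proposition~\ref{prop:singular-background}).
\end{itemize}

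The factor $\|f\|_{L^1}^{2-p}$ then comes from $L^{p-1}\lesssim\|f_2\|_{L^{n,1}}\lesssim s^{-1/n'}\|f\|_{L^1}$, using $B_f(s)\le s^{-1/n'}(\|f\|_{L^1}-A_f(s))$. It does not come from (a) applied to $f$. A scaling check $f\mapsto\lambda f$ only confirms that the homogeneities match; it cannot supply this step.
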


When $H(\xi)=|\xi|$ and $2\leq p<n$, Theorem \ref{thm:main} reduces to
\eqref{eq:intro-classical}. For a general norm $H$, however, the argument
requires anisotropic perimeter, relative anisotropic isoperimetry, and the
quantitative convexity of $H^2$; the extension is therefore not a mere
change of notation.

The second line of \eqref{eq:Rp} is the genuinely singular part of the
theorem. Since
\[
 A_f(s)\leq\|f\|_{L^1(\Omega)}
 \quad\Longrightarrow\quad
 A_f(s)
 \leq
 \|f\|_{L^1(\Omega)}^{2-p}A_f(s)^{p-1}
 \qquad(1<p<2),
\]
the singular estimate is weaker than the formal analogue of the degenerate
one. Its proof is based on a decomposition of the datum into an $L^1$ part
and an $L^{n,1}$ part. The solution corresponding to the latter has bounded
gradient by \cite{AntoniniCianchi2025} and can therefore serve as a
controlled background in \eqref{eq:intro-weighted}. This device is what
allows \Cref{thm:main} to cover the entire singular interval $1<p<2$.

The weak endpoint contained in this analysis is stated separately, since
it also plays a central role in the stability argument.

\begin{theorem}\label{thm:individual}
Let $1<p<n$ and set
\[
 q=\frac{n(p-1)}{n-1}.
\]
Let $f\in L^1(\Omega)$, and let $u$ be the SOLA of
\eqref{eq:poisson}, with either \eqref{eq:dirichlet} or
\eqref{eq:neumann}. Then
\begin{equation}\label{eq:individual}
 \|H(\nabla u)\|_{L^{q,\infty}(\Omega)}
 \leq
 C\|f\|_{L^1(\Omega)}^{\frac{1}{p-1}}.
\end{equation}
\end{theorem}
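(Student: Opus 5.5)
We prove \eqref{eq:individual} by the classical level-set (truncation) argument of Boccardo--Gallou\"et and B\'enilan et al., adapted to $H$. We work first with the approximating problems with bounded data $f_k\to f$ in $L^1(\Omega)$ and solutions $u_k$ in the energy space. We establish the bound for $u_k$ with a constant independent of $k$. Then we pass to the limit, using the a.e.\ convergence $\nabla u_k\to\nabla u$ built into the definition of SOLA and Fatou's lemma for distribution functions.

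\emph{Step 1: energy on level bands.} For $t>0$ and $h>0$ we test with $T_{t,h}(u_k)=T_{t+h}(u_k)-T_t(u_k)$ in the Dirichlet case. In the Neumann case we use the same test function applied to $u_k-\med(u_k)$, which is admissible since $\int f_k=0$. The identity $\cA_H(\nabla u)\cdot\nabla u=H(\nabla u)^p$ and the monotonicity of $\beta$ (the term $\beta(u_k)T_{t,h}(u_k)\ge0$ is dropped) give
\[
\int_{\{t<|u_k|<t+h\}}H(\nabla u_k)^p\,\dd x\le h\|f\|_{L^1(\Omega)},
\qquad\text{in particular}\qquad
\int_{\{|u_k|<t\}}H(\nabla u_k)^p\le t\|f\|_{L^1}.
\]

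\emph{Step 2: decay of $\mu(t)=|\{|u_k|>t\}|$.} We divide the band estimate by $h$ and let $h\to0$. Combining the coarea formula with H\"older's inequality and the anisotropic isoperimetric inequality $P_H(\{|u_k|>t\})\ge c\,\mu(t)^{1/n'}$ gives a differential inequality. In the Neumann case this is the relative version for sets of measure at most $m/2$, which is why the median normalization is used. The differential inequality integrates to
\[
\mu(t)\le C\big(\|f\|_{L^1}^{\frac1{p-1}}/t\big)^{\frac{n(p-1)}{n-p}}.
\]
Alternatively, we apply the Sobolev inequality to $T_t(u_k)$, which gives the same bound more directly.

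\emph{Step 3: splitting.} For $\lambda>0$ and any $t>0$,
\[
|\{H(\nabla u_k)>\lambda\}|\le \mu(t)+\lambda^{-p}\int_{\{|u_k|\le t\}}H(\nabla u_k)^p
\le C\Big(\frac{\|f\|^{\frac1{p-1}}}{t}\Big)^{\frac{n(p-1)}{n-p}}+\frac{t\|f\|}{\lambda^p}.
\]
We optimize in $t$, choosing $t=\|f\|^{\frac{1}{p-1}}(\|f\|^{\frac1{p-1}}/\lambda)^{\frac{n-p}{n-1}}$ up to constants. Both terms then become $C(\|f\|^{\frac1{p-1}}/\lambda)^{q}$ with $q=\frac{n(p-1)}{n-1}$. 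This is exactly \eqref{eq:individual}, since $\sup_\lambda\lambda|\{H(\nabla u)>\lambda\}|^{1/q}$ is equivalent to the $L^{q,\infty}$ quasinorm. The argument uses neither difference estimates nor \eqref{eq:intro-weighted}, so it is valid for all $1<p<n$.

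\emph{Main obstacle.} The delicate point is the Neumann case. There the isoperimetric or Sobolev step only holds for sets of measure at most $m/2$, and we must control $\mu(t)$ for $u_k-\med(u_k)$ via the relative anisotropic isoperimetric constant. We must also justify that the approximating medians are compatible with the SOLA normalization when passing to the limit. We would handle this by running Steps 2--3 separately on $(u_k-\med u_k)^{\pm}$, each of whose supports has measure at most $m/2$. We would then add the two distribution functions.
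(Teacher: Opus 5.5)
Your argument is correct, and it takes a genuinely different route from the paper's.

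\textbf{The paper's route.} The paper first proves the weak-$L^{\frac{n(p-1)}{n-p}}$ bound on $u_\pm$ (Lemma~\ref{lem:weak-estimate-solution}) via the coarea/isoperimetric distribution inequality. It then tests with the rank-weighted function $\Phi(x)=\int_0^{u_+(x)}I(\mu_{u_+}(t))\,\dd t$, where $I(r)=\int_0^{\min\{r,s\}}\zeta$. It combines $\|\Phi\|_{L^\infty}\le\int_0^s\zeta\,(u_+)^*$ with the lower Hardy--Littlewood bound of Lemma~\ref{lem:rank-weight} and takes the supremum over $\zeta\ge0$. This gives $[G_+^*(s)]^p\le CM^{p'}s^{-p/q}$ directly for rearrangements. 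Finally it recombines $G_\pm$ via $(a+b)^*(s)\le a^*(s/2)+b^*(s/2)$.

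\textbf{Your route and the comparison.} You use the classical B\'enilan et al.\ splitting $|\{H(\nabla u_k)>\lambda\}|\le\mu(t)+\lambda^{-p}\int_{\{|u_k|\le t\}}H(\nabla u_k)^p$ and optimize in $t$. Both proofs rest on the same two facts: the energy below height $t$ is at most $tM$, and $\mu$ has weak-type decay. In the limit where $\zeta$ concentrates at $\rho_0=s/2$, the paper's $\Phi$ is exactly $T_k(u_+)$ with $k=(u_+)^*(s/2)$. Its inequality then reads $(s/2)[G_+^*(s)]^p\le Mk$, which is the rearrangement form of your splitting.

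Your version is shorter. It does not need Lemma~\ref{lem:rank-weight} or its hypothesis $h=0$ on $\{\nabla z=0\}$. If you use the Sobolev alternative in Step~2, it also bypasses the distribution inequality of Lemma~\ref{lem:distribution}. The paper's weighted-test-function formalism is heavier here, but it is set up to be reused for the difference estimates in Section~\ref{section4} and Proposition~\ref{prop:singular-background}.

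Your ``main obstacle'' about medians is not really one. Only $\nabla u_k\to\nabla u$ a.e.\ enters the limit passage, and gradients do not see normalizing constants.

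\textbf{One slip to fix.} Your choice of $t$ has the ratio inverted. Write $A=\|f\|_{L^1}^{1/(p-1)}$ and $r_0=\frac{n(p-1)}{n-p}$. Balancing $(A/t)^{r_0}$ against $tA^{p-1}\lambda^{-p}$ requires
\[
t=A\Big(\frac{\lambda}{A}\Big)^{\frac{n-p}{n-1}}=\|f\|_{L^1}^{\frac{1}{n-1}}\lambda^{\frac{n-p}{n-1}},
\]
which increases with $\lambda$. With this $t$, both terms equal $(A/\lambda)^{q}$, because $\frac{n-p}{n-1}\,r_0=q$ and $p-\frac{n-p}{n-1}=q$. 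With your $t=A(A/\lambda)^{\frac{n-p}{n-1}}$, the first term becomes $(\lambda/A)^{q}$, which blows up as $\lambda\to\infty$.
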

Together with the global estimate
\[
 \|H(\nabla u)\|_{L^\infty(\Omega)}
 \leq
 C\|f\|_{L^{n,1}(\Omega)}^{\frac{1}{p-1}}
\]
from \cite{AntoniniCianchi2025}, Theorem \ref{thm:individual} provides the two
endpoint controls behind the rearrangement problem. In the singular regime,
however, they cannot be joined by the same stability estimate used when
$p\geq2$. The correct replacement must depend on the size of both data.

\begin{theorem}\label{thm:singular-stability}
Let
\[
 p_*(n)=\frac{3n-2}{2n-1},
 \qquad
 p_*(n)<p<2,
 \qquad
 q=\frac{n(p-1)}{n-1}.
\]
Let $u$ and $v$ be the SOLA corresponding to
$f,g\in L^1(\Omega)$ and with the same boundary condition. In the
Neumann case, assume that both data have zero mean. Then
\begin{equation}\label{eq:singular-stability}
 \|H(\nabla u-\nabla v)\|_{L^{q,\infty}(\Omega)}
 \leq
 C\|f-g\|_{L^1(\Omega)}
 \big(
 \|f\|_{L^1(\Omega)}+\|g\|_{L^1(\Omega)}
 \big)^{\frac{2-p}{p-1}}.
\end{equation}
\end{theorem}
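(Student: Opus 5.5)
We first reduce to bounded data. For $f,g\in L^\infty(\Omega)$ (with zero mean in the Neumann case), the corresponding weak solutions $u,v$ lie in the energy space. For general $L^1$ data we pass to the limit along the approximating sequences defining the SOLAs. Gradients converge a.e., and the $L^{q,\infty}$ quasinorm is lower semicontinuous under a.e. convergence (Fatou for distribution functions). So it suffices to prove \eqref{eq:singular-stability} for bounded data with a constant independent of the $L^\infty$ norms. By homogeneity we may also normalise $\|f\|_{L^1}+\|g\|_{L^1}=1$. Indeed, $u_{tf}=t^{1/(p-1)}u_f$ when $\beta\equiv0$; when $\beta\neq0$ we do not use homogeneity but keep track of the factors explicitly. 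Set $w=u-v$ and $\delta=\|f-g\|_{L^1}$.

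\textbf{Level-set estimate for the difference.} Test the difference of the two equations with the truncation $T_{k+h}(w)-T_k(w)$, or with $\operatorname{sign}$-type truncations of $w-\med w$ in the Neumann case. The monotone term $\beta(u)-\beta(v)$ has the right sign and can be dropped. Letting $h\to0$ and using \eqref{eq:intro-weighted} gives, for a.e.\ $k$,
\[
 -\frac{d}{dk}\int_{\{|w|>k\}}[H(\nabla u)+H(\nabla v)]^{p-2}H(\nabla w)^2\,\dd x\;\le\; C\delta .
\]
Integrating in $k$, we would like to use the plain energy bound $\int_{\{|w|<k\}}\Phi^{p-2}H(\nabla w)^2\le Ck\delta$, where $\Phi=H(\nabla u)+H(\nabla v)$. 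However, the coarea and isoperimetric argument of Cianchi--Maz'ya needs $\int_{\{|w|=k\}}H(\nabla w)$, and the weight $\Phi^{p-2}$ blocks a direct application of Cauchy--Schwarz. We therefore use Hölder:
\[
 \int_{E}H(\nabla w)^{r}\le\Big(\int_E\Phi^{p-2}H(\nabla w)^2\Big)^{r/2}\Big(\int_E\Phi^{\frac{(2-p)r}{2-r}}\Big)^{\frac{2-r}{2}},
\]
with $r<q$ chosen so that $\frac{(2-p)r}{2-r}<q$. We then control $\Phi$ in $L^{q,\infty}$ by \Cref{thm:individual}: $\|\Phi\|_{L^{q,\infty}}\le C(\|f\|_{L^1}+\|g\|_{L^1})^{1/(p-1)}$.

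\textbf{From truncated energy to a Marcinkiewicz bound.} First estimate $w$ itself. The truncated energy bound, combined with the Hölder step (taking $E=\{|w|<k\}$) and the anisotropic (relative) Sobolev inequality applied to $T_k(w)$, gives $\|w\|_{L^{\sigma,\infty}}$ for a Sobolev exponent $\sigma$ obtained via the Lorentz--Sobolev embedding. Then split
\[
 |\{H(\nabla w)>t\}|\le|\{|w|>k\}|+|\{H(\nabla w)>t,\ |w|\le k\}|,
\]
bound the second set by Chebyshev in $L^{r}$ through the Hölder inequality above, and optimise in $k$. This is the Bénilan et al.\ scheme with weights.

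\textbf{Main obstacle.} The critical point is the exponent bookkeeping. We need $r$ such that the tail of $\Phi$ is integrable, namely $\frac{(2-p)r}{2-r}<q$. We also need the resulting Lorentz--Sobolev exponent for $w$ to be large enough that optimising in $k$ returns exactly the exponent $q$, rather than something smaller, with the powers of $\delta$ and of the normalisation combining as $\delta\,(\cdot)^{(2-p)/(p-1)}$. I expect these conditions to be compatible precisely when $p>p_*(n)=\frac{3n-2}{2n-1}$. Note that $p_*$ solves $2(p-1)n=n-(2-p)(n-1)\cdot$(correction); the verification is a linear inequality in $p$. If the direct optimisation misses the endpoint $q$, the fallback is to prove strong $L^{r}$ bounds for every $r<q$ and upgrade to $L^{q,\infty}$ by a dyadic decomposition of the level sets $\{2^j<|w|\le2^{j+1}\}$. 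On each such set we apply the Hölder step with the weak-type norm of $\Phi$ restricted there, so that Lorentz-space Hölder (in the form $L^{q,\infty}\cdot L^{q',1}$-type pairings) gives the endpoint. Throughout, the homogeneity check, with both sides scaling like $t^{1/(p-1)}$, serves as a consistency test on the exponents.
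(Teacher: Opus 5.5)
Your proposal has a genuine gap at the step that decides the endpoint. You do the H\"older step in Lebesgue spaces with $\frac{(2-p)r}{2-r}<q$, and you even impose $r<q$. That is strictly below the critical exponent $r_c=\frac{2q}{q+2-p}=\frac{2n(p-1)}{n+p-2}$, which for $p<n$ is in fact larger than $q$. Everything downstream inherits the loss:
\begin{itemize}
\item the Sobolev inequality for $T_k(w)$ gives $|\{|w|>k\}|\lesssim k^{-r^*/2}$ with $r^*/2<\frac{n(p-1)}{n-p}$;
\item your splitting and optimisation in $k$ then return the weak exponent $\frac{nr}{2n-r}$, which is strictly increasing in $r$ and equals $q$ only at $r=r_c$.
\end{itemize}
So the main line proves $H(\nabla w)\in L^{\tilde q,\infty}$ for some $\tilde q<q$, not \eqref{eq:singular-stability}. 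The fallback does not repair this. Strong $L^r$ bounds for all $r<q$, even with the optimal blow-up of constants as $r\uparrow q$, only give $[H(\nabla w)]^*(s)\lesssim s^{-1/q}\log(e|\Omega|/s)^{1/q}$. The dyadic decomposition with ``$L^{q,\infty}\cdot L^{q',1}$ pairings'' is never actually specified. You also never locate the threshold: the relation you display does not produce $p_*(n)$. With $1<r<q$ you would even need $q>1$, i.e.\ $p>2-\frac1n$, which is a smaller range than $p>p_*(n)$.

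The missing idea is to do the H\"older step exactly at $r_c$ in Lorentz spaces, feeding in the weak-type background bound itself. Take your $\Phi=H(\nabla u)+H(\nabla v)$ and $N=\|f\|_{L^1}+\|g\|_{L^1}$, and write $H(\nabla w)\chi_{\{|w|<k\}}=\bigl(\Phi^{\frac{p-2}{2}}H(\nabla w)\chi_{\{|w|<k\}}\bigr)\Phi^{\frac{2-p}{2}}$. The first factor has $L^2$ norm at most $(Ck\delta)^{1/2}$, and $\|\Phi^{\frac{2-p}{2}}\|_{L^{\frac{2q}{2-p},\infty}}\lesssim N^{\frac{2-p}{2(p-1)}}$, so the product lies in $L^{r_c,2}$. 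The Lorentz--Sobolev inequality \eqref{eq:lorentz-sobolev} at exponent $r_c$ needs $r_c>1$. This is equivalent to $q>2-p$, i.e.\ $p>p_*(n)$, and it is exactly where the threshold comes from. It yields $|\{|w|>k\}|\le (A/k)^{r_0}$ with $A=C\delta N^{\frac{2-p}{p-1}}$ and $r_0=\frac{n(p-1)}{n-p}=\frac{r_c^*}{2}$. With these endpoint bounds your splitting does close:
\[
|\{H(\nabla w)>t\}|\le \Bigl(\frac{A}{k}\Bigr)^{r_0}+Ct^{-r_c}(k\delta)^{\frac{r_c}{2}}N^{\frac{r_c(2-p)}{2(p-1)}}.
\]
Optimising in $k$ gives $Ct^{-q}\bigl(\delta N^{\frac{2-p}{p-1}}\bigr)^{q}$, since $\frac{r_c r_0}{r_0+r_c/2}=q$.

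Repaired this way, your argument would be a legitimate alternative to the second half of the paper's proof. The paper instead tests with $\int_0^{w_+}I(\mu(t))\,dt$ and uses the rank-weight lemma to put $\Phi^{p-2}H(\nabla w)^2\chi_{\{w>0\}}$ in $L^{q/p,\infty}$ with norm $\lesssim\delta^2N^{\frac{2-p}{p-1}}$. It then concludes by weak-Lorentz H\"older. The first half of the paper's proof is precisely the piece your argument lacks: the truncated weighted energy, the endpoint Lorentz--H\"older step, and Lorentz--Sobolev for $T_k(w_+)$.
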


The right-hand side of \eqref{eq:singular-stability} has exactly the same
homogeneity as the gradient of a solution. The restriction on $p$ is caused
by the Lorentz--Sobolev exponent
\[
 r
 =
 \frac{2q}{q+2-p}
 =
 \frac{2n(p-1)}{n+p-2}.
\]
The proof requires $r>1$, which is equivalent to $p>p_*(n)$. This threshold
belongs to the present method; no failure of
\eqref{eq:singular-stability} at or below $p_*(n)$ is asserted.

We also emphasize that Theorem \ref{thm:main} in the singular range does not rely
on Theorem \ref{thm:singular-stability}. It uses instead the bounded condition described above. Thus the rearrangement estimate holds for every
$1<p<2$, and the free bounded condition for stability theorem is presently obtained only for $p>p_*(n)$.

A central contribution of this paper is that the global gradient
rearrangement estimate covers the entire range $1<p<n$, including the singular interval
$1<p<2$. This is not a consequence of the difference estimate stated above.
In the singular region, the classical coercivity inequality involves the gradients of both solutions are not hold, and the estimation of differential stability cannot be performed for global range due to the influence of the H\'older inequality. We overcome this problem by decomposing the datum into an $L^1$ part and an $L^{n,1}$ part, and comparing the original solution with the solution associated with the latter.

It is therefore important to distinguish the range of the rearrangement theorem from that of the stability theorem.
The rearrangement estimate and the individual weak-type gradient
estimate hold for every $1<p<n$, whereas the stability estimate is established, in the singular regime only for $p_*(n)<p<2$.
The threshold $p_*(n)$ arises from the Lorentz-Sobolev step in the proof of stability.

The global nature of Theorem \ref{thm:main} has several consequences. It yields
the endpoint regularity scale, applies after an exponential change of
variables to equations with a natural $H(\nabla u)^p$ growth term, and can
be tested explicitly on Wulff balls. The Wulff-ball solution also reveals
a feature that is invisible in small-set asymptotics: a finite-tail
representation whose lower limit is exactly $s$ cannot hold uniformly as
$s\uparrow|\Omega|$. A zero extension of $f^*$ or a shifted lower limit is
needed to retain the correct large-$s$ behavior.

The rearrangement estimates yield a systematic passage from
integrability assumptions on the datum to global regularity of
the anisotropic gradient.
We formulate this passage through one-dimensional Hardy
inequalities in rearrangement-invariant spaces.
In the degenerate range $2\leq p<n$, this gives the weak endpoint
estimate for $L^1$ data and the strong estimate
\[
    \|H(\nabla u)\|_{L^{\frac{nq(p-1)}{n-q}}(\Omega)}
    \leq C\|f\|_{L^q(\Omega)}^{\frac{1}{p-1}},
    \qquad 1<q<n.
\]
For $q=n$, the gradient belongs to every finite Lebesgue space,
whereas $q>n$ yields a globally bounded gradient.
We also derive Orlicz and Zygmund estimates in this regime,
including exponential and double-exponential integrability
at the critical scale.

The applications in the singular range $1<p<2$ are stated
separately, reflecting the different structure of the
rearrangement bound.
For $L^1$ data, the same weak endpoint
$H(\nabla u)\in L^{n'(p-1),\infty}(\Omega)$ remains valid
throughout the singular interval.
For $1<q<n$, the singular rearrangement estimate yields
\[
    \|H(\nabla u)\|_{L^{\sigma_{p,q},\infty}(\Omega)}
    \leq C\|f\|_{L^q(\Omega)}^{\frac{1}{p-1}},
    \qquad
    \sigma_{p,q}
    =\frac{p-1}{\frac{1}{n'}-\frac{p-1}{q'}},
    \qquad
    q'=\frac{q}{q-1},
\]
and hence strong $L^\sigma$ estimates for every
$0<\sigma<\sigma_{p,q}$.
At $q=n$, the corresponding weak exponent is
$\frac{n'(p-1)}{2-p}$.
For $q>n$, a bounded gradient follows by combining the global
$L^{n,1}$ endpoint estimate with
$L^q(\Omega)\hookrightarrow L^{n,1}(\Omega)$.
These conclusions distinguish what follows directly from the singular rearrangement bound from what requires the additional
Lorentz endpoint estimate. Moreover, in the degenerate range, we also obtain spatial regularity estimates for almost every time slice of parabolic solutions satisfying the additional relation $\partial_t u=\beta(u)$,
under the hypotheses stated in the corresponding application.

\subsection{Organization of the paper}\label{subsec:organization}

The remaining part of the paper is organized as follows. In
\Cref{section2} we collect the anisotropic structural inequalities,
rearrangement and Lorentz-space tools, level-set formulas, boundary
inequalities, and the definition of SOLA. In
\Cref{section3} we establish the individual weak endpoint estimate.
\Cref{section4} contains the homogeneity obstruction and the difference
estimates in the degenerate and singular regimes. The global rearrangement
theorem is proved in \Cref{section5} through a decomposition of the datum
and a comparison with a solution having bounded gradient. Endpoint
consequences, the natural-gradient-growth application, and the Wulff-ball
model example are discussed in \Cref{section6}. Finally, \Cref{section7} develops applications of the rearrangement estimate to function-space regularity.

\section{Preliminaries}

\subsection{Anisotropic structure and geometry}
\label{section2}

For this section we refer to \cite{CWYY,CY2025ANS} for more details of anisotropic operator (also known as Finsler operator).
Throughout the paper we assume that \(H:\mathbb R^n\to[0,+\infty)\) is a norm
such that
\[
H\in C^2(\mathbb R^n\setminus\{0\}),
\]
and such that the unit ball of \(H\) is strictly convex and has \(C^2\) boundary.
The function $H$ is called anisotropic norm or Finsler norm, and, often, it represents the surface tension density.  In particular,
\begin{equation}\label{eq:H_properties}
H(\xi)>0\quad\text{for }\xi\neq0,\qquad
H(t\xi)=|t|H(\xi)
\quad\text{for every }t\in\mathbb R,\ \xi\in\mathbb R^n.
\end{equation}
Moreover, we have that the anisotropy $H$ satisfies there exist constants $0<\lambda\le\Lambda<\infty$ such that
\begin{equation}\label{eq:ellipticity-H}
 \lambda I
 \le \frac12\nabla_\xi^2H^2(\xi)
 \le \Lambda I
 \quad (\xi\ne 0).
\end{equation} 
The polar norm is denoted by $H^\circ: \R^n \to [0, +\infty)$  and it is defined in the following way 
\begin{equation}
    H^\circ(x) = \sup_{\xi \neq 0} \frac{\langle x, \xi \rangle}{H(\xi)},
\end{equation}
 recalling that 
\begin{equation*}
H(x) = \sup_{\xi \neq 0 } \frac{\left< x , \xi \right>}{H^{\circ}(\xi)}.
\end{equation*}
The Wulff ball, or Wulff shape,  centered at the origin with radius $R$ is defined as:
\[
    \mathcal{W}_R = \{ x \in \R^n : H^\circ(x) < R \}.
\]
 We denote the unit Wulff ball by $\mathcal{W} = \mathcal{W}_1$ and its Lebesgue measure by $\kappa_n = |\mathcal{W}|$. Note that in the Euclidean case where $H(\xi)=|\xi|$, $\mathcal{W}$ is the standard unit ball and $\kappa_n = \omega_n$.
We also recall the following properties for $H$ and $H^{\circ}$:
\begin{equation} \label{prodsc}
H_{\xi}(\xi)\cdot \xi = H(\xi), \;\;\;\; H^{\circ}_{\xi}(\xi)\cdot \xi = H^{\circ}(\xi),
\end{equation} 
\begin{equation} \label{HHcirc}
H(H^{\circ}_{\xi}(\xi))= H^{\circ}(H_{\xi}(\xi))=1 \,\,\,\,\, \forall \xi \in \mathbb{R}^n \setminus \{0\},
\end{equation} 
\begin{equation}\label{Hcircxi}
H^{\circ}(\xi)H_{\xi}(H^{\circ}_{\xi}(\xi))=H(\xi)H^{\circ}_{\xi}(H_{\xi}(\xi)) = \xi \,\,\, \forall \xi \in \mathbb{R}^n \setminus \{0\}.
\end{equation}

 Classical notion of perimeter intended as the $\mathcal{H}^{n-1}$-dimensional measure. This yields the generalized definition of perimeter of a set $E$ of $\Omega$ with anisotropic operator as follows
\begin{align*}\label{2.1.11}
P_{H}(E;\Omega) = \int_{\Omega}|D \chi_{E}(x)|_{H} \mathrm{d}x= \sup\{\int_{E}\mathrm{div}\varrho(x)\;\mathrm{d}x: \varrho \in C^{1}_{c}(\Omega,\mathbb{R}^n), H^{\circ}(\varrho) \leq 1 \}.
\end{align*}\[\]
When $E$ is finite perimeter set, we have
\vskip0.1cm
\begin{align*}\label{2.1.12}
P_{H}(E;\Omega) =   \int_{\Omega}H(\frac{\dd D_{\chi_E}}{\dd |D_{\chi_E}|})\mathrm{d} |D_{\chi_E}|= \int_{\partial^{*}E\cap\Omega}H(\nu)\mathrm{d}\mathcal{H}^{n-1},
\end{align*}\[\]
where $\partial^{*}E$ is the reduced boundary of $E$ and $\nu$ is outer normal (more details about reduced boundary see book \cite{M} section 12.1).

We impose one of the alternatives
\begin{equation}\label{eq:domain}
 \begin{cases}
 \Omega\text{ is bounded and Lipschitz, and }
 \partial\Omega\in W^2L^{n-1,1},&\text{or}\\
 \Omega\text{ is bounded and convex}.&
 \end{cases}
\end{equation}
The first condition means that local graph representations of $\partial\Om$ have weak second derivatives in $L^{n-1,1}$.  Equivalently, the weak second fundamental form belongs locally to this Lorentz space. In the end, for H\"older exponent $p$ and $p'$ satisfying $\frac{1}{p}+\frac{1}{p'}=1$ and $p'=\frac{p}{p-1}$.

\subsection{Rearrangement and Function spaces}

For a measurable function $u$ on $\Om$, let
\[
 \mu_u(t)=|\{x\in\Om:|u(x)|>t\}|,
 \qquad
 u^*(s)=\inf\{t\ge 0:\mu_u(t)\le s\}.
\]
Moreover, we define the increasing rearrangements of $u$ as 
\[
u_*(s)=u^*(|\Omega|-s).
\]
Consequently, for any $p \in [1, \infty]$, we have
\[
    \|u\|_{L^p(\Omega)} = \|u^*\|_{L^p(0, |\Omega|)} = \|u_*\|_{L^p(0, |\Omega|)}.
\]
A key property we will use is the Hardy-Littlewood inequality: for any non-negative functions $f, g \in L^1(\Omega)$,
\[
    \int_\Omega |f(x) g(x)| \, dx \leq \int_0^{|\Omega|} f^*(s) g^*(s) \, ds. 
\]
In particular, choosing $g(x) = \chi_{\{u > t\}}(x)$, we obtain:
\begin{equation} \label{eq:HL_levelsets}
    \int_{\{u > t\}} f(x) \, dx \leq \int_0^{\mu(t)} f^*(s) \, ds.
\end{equation}
Moreover, we have
\[
     \int_0^{|\Omega|} f^*(s) g_*(s) \, ds\leq\int_{\Omega} |f(x)g(x)|dx, 
\]
The maximal rearrangement of \(u\) is defined by
\[
u^{**}(s)
=
\frac1s\int_0^s u^*(r)\,\mathrm dr,
\qquad 0<s<m.
\]
Clearly,
\[
u^*(s)\leq u^{**}(s),
\]
and \(u^{**}\) is non-increasing. 

Let \(0<q<\infty\) and \(0<k\leq\infty\). The Lorentz space
\(L^{q,k}(\Omega)\) consists of all measurable functions
\(u:\Omega\to\mathbb R\) for which the quantity
\[
\|u\|_{L^{q,k}(\Omega)}
=
\left\|s^{1/q}u^*(s)\right\|_{L^k((0,m),\,\mathrm ds/s)}
\]
is finite. More explicitly, when \(0<k<\infty\),
\[
\|u\|_{L^{q,k}(\Omega)}
=
\left(
\int_0^m
\bigl[s^{\frac1q}u^*(s)\bigr]^k
\frac{\mathrm ds}{s}
\right)^{1/k}
=
\left(
\int_0^m
s^{\frac{k}{q}-1}u^*(s)^k\,\mathrm ds
\right)^{1/k},
\]
where
\[
\|u\|_{L^{q,\infty}(\Omega)}
=
\sup_{0<s<m}s^{\frac1q}u^*(s).
\]
We also adopt the convention
\[
L^{\infty,\infty}(\Omega)=L^\infty(\Omega)\qquad L^{q,q}(\Omega)=L^q(\Omega),
\]
with equality of norms for every \(0<q<\infty\).

For \(1<q<\infty\), Hardy's inequality yields
\[
\left\|s^{\frac1q}u^*(s)\right\|_{L^k((0,m),\,\mathrm ds/s)}
\sim
\left\|s^{\frac1q}u^{**}(s)\right\|_{L^k((0,m),\,\mathrm ds/s)},
\]
where the equivalence constants depend only on \(q\) and \(k\).
Accordingly, one may equivalently use
\[
\|u\|_{(q,k)}
=
\left\|s^{\frac1q}u^{**}(s)\right\|_{L^k((0,m),\,\mathrm ds/s)}.
\]
If \(1<q<\infty\) and \(1\leq k\leq\infty\), this expression defines
an equivalent Banach function norm. When \(0<k<1\), the space
\(L^{q,k}(\Omega)\) is, in general, only a quasi-Banach function
space.

The weak Lorentz space \(L^{q,\infty}(\Omega)\), also called the
Marcinkiewicz or weak Lebesgue space, admits the equivalent
distribution-function characterization
\[
\|u\|_{L^{q,\infty}(\Omega)}
=
\sup_{t>0}
t\,\mu_u(t)^{\frac1q}
=
\sup_{0<s<m}s^{\frac1q}u^*(s).
\]
Equivalently, \(u\in L^{q,\infty}(\Omega)\) if and only if there exists
a constant \(C>0\) such that
\[
\mu_u(t)\leq C^q t^{-q}
\qquad\text{for every }t>0.
\]

The Lorentz scale refines the usual Lebesgue scale. More precisely, if
\(0<k_1<k_2\leq\infty\), then
\[
L^{q,k_1}(\Omega)
\hookrightarrow
L^{q,k_2}(\Omega),
\]
and hence
\[
L^{q,1}(\Omega)
\hookrightarrow
L^{q,k}(\Omega)
\hookrightarrow
L^{q,\infty}(\Omega)
\qquad
\text{for every }1\leq k\leq\infty.
\]
Furthermore, since \(|\Omega|<\infty\), if
\[
0<q_2<q_1\leq\infty
\]
and \(0<k_1,k_2\leq\infty\), then
\[
L^{q_1,k_1}(\Omega)
\hookrightarrow
L^{q_2,k_2}(\Omega).
\]
More quantitatively,
\[
\|u\|_{L^{q_2,k_2}(\Omega)}
\leq
C
|\Omega|^{1/q_2-1/q_1}
\|u\|_{L^{q_1,k_1}(\Omega)},
\]
where \(C\) depends only on \(q_1,q_2,k_1\), and \(k_2\).

If \(1<q<\infty\), \(1\leq k\leq\infty\), and
\[
\frac1q+\frac1{q'}=1,
\qquad
\frac1k+\frac1{k'}=1,
\]
with the usual endpoint conventions, then the Lorentz--H\"older
inequality gives
\begin{align}\label{eq:Lorentz--Holder}
\int_\Omega |u(x)v(x)|\,\mathrm dx
\leq
C
\|u\|_{L^{q,k}(\Omega)}
\|v\|_{L^{q',k'}(\Omega)}.
\end{align}
In particular,
\[
L^{q,1}(\Omega)\cdot L^{q',\infty}(\Omega)
\hookrightarrow L^1(\Omega).
\]
This endpoint duality is one of the principal reasons for the
appearance of \(L^{n,1}(\Omega)\) and \(L^{n',\infty}(\Omega)\) in
global gradient estimates.

\subsection{Coercivity and rearrangement tools}

\begin{lemma}\label{lem:monotonicity}
Let $\cA_{H}(\xi)=H(\xi)^{p-1}\nabla H(\xi)$ for $\xi\ne0$.
\begin{enumerate}[label=\textup{(\roman*)}]
\item If $p\ge2$, then
\begin{equation}\label{eq:strong-monotonicity}
 (\cA_{H}(\xi)-\cA_{H}(\eta))\cdot(\xi-\eta)
 \ge c H(\xi-\eta)^p.
\end{equation}
\item If $1<p<2$, then
\begin{equation}\label{eq:weighted-monotonicity}
 (\cA_{H}(\xi)-\cA_{H}(\eta))\cdot(\xi-\eta)
 \ge c[H(\xi)+H(\eta)]^{p-2}H(\xi-\eta)^2.
\end{equation}
\end{enumerate}
The constants depend only on $p,\lambda,\Lambda$.
\end{lemma}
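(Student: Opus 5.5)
We write $V(\xi)=\tfrac12 H(\xi)^2$, so that $\cA_H(\xi)=H(\xi)^{p-2}\nabla V(\xi)$ with $\nabla V(\xi)=H(\xi)\nabla H(\xi)$. Two facts about $V$ will be used repeatedly.

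\begin{itemize}
\item By \eqref{eq:ellipticity-H}, $\lambda|\zeta|^2\le\nabla^2V(\zeta)\zeta'\cdot\zeta'/|\zeta'|^2\cdot|\zeta'|^2\,|\zeta|^{-2}$ in the sense $\lambda I\le\nabla^2V\le\Lambda I$ away from the origin.
\item $V$ is homogeneous of degree two and $\nabla V$ is $1$-homogeneous and continuous.
\end{itemize}

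Together these give the Lipschitz bound $|\nabla V(\xi)-\nabla V(\eta)|\le\Lambda|\xi-\eta|$ and the strong monotonicity $(\nabla V(\xi)-\nabla V(\eta))\cdot(\xi-\eta)\ge\lambda|\xi-\eta|^2$. For strong monotonicity, integrate $\nabla^2V$ along the segment $\eta+t(\xi-\eta)$. If the segment passes through $0$, then $\xi,\eta$ are antiparallel and the inequality follows from homogeneity and Euler's identity; alternatively, perturb and use continuity. Since $H$ is a norm, $a|\zeta|\le H(\zeta)\le b|\zeta|$, and the constants $a,b$ can be taken to depend on $\lambda,\Lambda$ by comparing $V$ with $\tfrac\lambda2|\cdot|^2$ and $\tfrac\Lambda2|\cdot|^2$ via Taylor expansion at $0$ of a $2$-homogeneous function. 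Hence it suffices to prove both inequalities with $|\xi-\eta|$ on the right, and also $|\xi|+|\eta|$ in place of $H(\xi)+H(\eta)$ in (ii).

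The core step is the standard integral representation. Set $\xi_t=\eta+t(\xi-\eta)$. We have $D\cA_H(\zeta)=H^{p-2}\big(\nabla^2V+(p-2)\,\nabla H\otimes\nabla H\big)$, and $\nabla H\otimes\nabla H=H^{-2}\nabla V\otimes\nabla V$. We claim that, for every $\zeta'$ and $\zeta\ne0$,
\[
D\cA_H(\zeta)\zeta'\cdot\zeta'\ \ge\ c\,H(\zeta)^{p-2}|\zeta'|^2 .
\]
For $p\ge2$ this is immediate from $\nabla^2V\ge\lambda I$. For $1<p<2$ we need $\nabla^2V\zeta'\cdot\zeta'-(2-p)\,(\nabla H\cdot\zeta')^2\ge c|\zeta'|^2$. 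By $2$-homogeneity, $\nabla^2V(\zeta)\zeta=\nabla V(\zeta)$ and $\nabla^2V\zeta\cdot\zeta=H^2$. Cauchy--Schwarz for the positive form $\nabla^2V$ then gives
\[
(\nabla V\cdot\zeta')^2=(\nabla^2V\zeta\cdot\zeta')^2\le H^2\,\nabla^2V\zeta'\cdot\zeta',
\]
that is, $(\nabla H\cdot\zeta')^2\le\nabla^2V\zeta'\cdot\zeta'$. Therefore the form is at least $(p-1)\nabla^2V\zeta'\cdot\zeta'\ge(p-1)\lambda|\zeta'|^2$. The segment meets $0$ in at most one point, so integrating in $t$ gives
\[
(\cA_H(\xi)-\cA_H(\eta))\cdot(\xi-\eta)\ge c|\xi-\eta|^2\int_0^1H(\xi_t)^{p-2}\,dt.
\]

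It remains to estimate $I=\int_0^1|\xi_t|^{p-2}dt$ using the comparability of $H(\xi_t)$ with $|\xi_t|$.
\begin{itemize}
\item \textbf{Case (ii), $p<2$.} Here $|\xi_t|\le|\xi|+|\eta|$, so $I\ge(|\xi|+|\eta|)^{p-2}$, which is exactly \eqref{eq:weighted-monotonicity}.
\item \textbf{Case (i), $p\ge2$.} Here we use the classical lower bound $\int_0^1|\eta+t(\xi-\eta)|^{p-2}dt\ge c_p|\xi-\eta|^{p-2}$. On the subinterval of $t$ of length at least $1/4$ where $|\xi_t|\ge|\xi-\eta|/4$, the integrand is bounded below. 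Indeed, $t\mapsto|\xi_t|$ is a convex function with slope of modulus $|\xi-\eta|$ along the line, so the set where $|\xi_t|<|\xi-\eta|/4$ has length at most $1/2$. This yields $c|\xi-\eta|^p\ge c'H(\xi-\eta)^p$.
\end{itemize}

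The main obstacle is the singular-case Hessian bound $\nabla^2V-(2-p)\nabla H\otimes\nabla H\ge(p-1)\lambda I$. The key point there is the Cauchy--Schwarz trick based on Euler's identity $\nabla^2V(\zeta)\zeta=\nabla V(\zeta)$. The remaining steps are routine.
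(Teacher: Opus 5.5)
Your proof is correct and follows the same route as the paper. The common steps are the Hessian bound $D\cA_H(\zeta)\ge c\,H(\zeta)^{p-2}I$, integration along the segment $\xi_t=\eta+t(\xi-\eta)$, and a lower bound on $\int_0^1 H(\xi_t)^{p-2}\,dt$; for $1<p<2$ the Hessian bound rests on $(\nabla H\cdot z)^2\le\tfrac12\nabla^2H^2(\zeta)z\cdot z$, which you get from Cauchy--Schwarz for $\nabla^2V$ plus Euler's identity, where the paper uses convexity of $H$. Your sublevel-interval argument for $p\ge2$ and the bound $\lambda|\zeta|^2\le H(\zeta)^2\le\Lambda|\zeta|^2$ supply details the paper only sketches (its ``norming functional'' step and the passage from $|\xi-\eta|$ to $H(\xi-\eta)$ with constants depending on $\lambda,\Lambda$); you should also fix the garbled first bullet, which should simply read $\lambda I\le\nabla^2V\le\Lambda I$ away from the origin.
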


\begin{proof}
Set $F(\xi)=\frac{H(\xi)^p}{p}$.  Euler's identity and direct
differentiation give
\begin{equation}\label{eq:hessian-F}
 \nabla F=\cA_{H},\qquad
 \nabla^2F(\xi)=H(\xi)^{p-2}
 \left[\frac12\nabla_\xi^2H^2(\xi)+(p-2)\nabla H(\xi)\otimes\nabla H(\xi)\right],
\end{equation}
Since $H$ is convex,
\begin{equation}\label{eq:matrix-cauchy}
  \left(\frac12\nabla_\xi^2H^2(\xi)\right)z\cdot z =(H\nabla^2H+\nabla H\otimes\nabla H)z\cdot z \geq (\nabla H(\xi)\cdot z)^2.
\end{equation}
If $p\ge2$, $(p-2)(\nabla H(\xi)\cdot z)^2 \geq 0$. Combining with \eqref{eq:ellipticity-H}, we have
\begin{align*}
 \nabla^2F(\xi)z\cdot z &= H(\xi)^{p-2}
 \left[\frac12\nabla_\xi^2H^2(\xi)+(p-2)\nabla H(\xi)\otimes\nabla H(\xi)\right]z\cdot z\\
 &\ge\lambda H(\xi)^{p-2}|z|^2.
\end{align*}
Integrating on the segment from $\eta$ to $\xi$ and using a norming
functional for $\xi-\eta$ yields \eqref{eq:strong-monotonicity}.

If $1<p<2$, multiplying both sides of \eqref{eq:matrix-cauchy} by $(p-2)H^{p-2}$, then we obtain
\begin{align*}
&(p-2)H^{p-2}\left(\frac12\nabla_\xi^2H^2(\xi)\right)z\cdot z +H^{p-2}\left(\frac12\nabla_\xi^2H^2(\xi)\right)z\cdot z\\
&\leq (p-2)H^{p-2}(\nabla H(\xi)\cdot z)^2 + H^{p-2}\left(\frac12\nabla_\xi^2H^2(\xi)\right)z\cdot z \\
&= H^{p-2}\left((p-1)\nabla H\otimes\nabla H + H\nabla^2H\right)z\cdot z\\
&= \nabla^2F(\zeta)z\cdot z.
\end{align*}
Combining with \eqref{eq:ellipticity-H}, we have
 \begin{align*}
(p-1)\lambda H(\zeta)^{p-2}|z|^2 \leq \nabla^2F(\zeta)z\cdot z
\end{align*}
Let
\[
d=\xi-\eta,\qquad z_t=\eta+td=(1-t)\eta+t\xi,
\qquad 0\le t\le 1.
\]
Since \(A_H=\nabla F\), direct calculation gives
\begin{align*}
\bigl(A_H(\xi)-A_H(\eta)\bigr)\cdot d
&=\int_0^1 \frac{d}{dt}
       \bigl[A_H(z_t)\cdot d\bigr]\,dt\\
&=\int_0^1 D^2F(z_t)d\cdot d\,dt\\
&\ge (p-1)\lambda |d|^2
       \int_0^1 H(z_t)^{p-2}\,dt.
\end{align*}
By convexity and positive one-homogeneity of \(H\),
\[
H(z_t) 
\le (1-t)H(\eta)+tH(\xi)
\le H(\eta)+H(\xi),\qquad 0<t<1.
\]
Since \(p-2<0\), the function \(s\mapsto s^{p-2}\) is decreasing
on \((0,\infty)\). Hence, for almost every \(t\in(0,1)\),
\[
H(z_t)^{p-2}
\ge \bigl[H(\xi)+H(\eta)\bigr]^{p-2}.
\]
Consequently,
\[
\bigl(A_H(\xi)-A_H(\eta)\bigr)\cdot d
\ge c
    \bigl[H(\xi)+H(\eta)\bigr]^{p-2}|d|^2,
\]
which is the proof of
\eqref{eq:weighted-monotonicity}.
\end{proof}

\vspace{1pt}
For \(1<p<2\) and \(L\ge0\), define
\begin{equation}\label{eq:Psi}
  \Psi_L(t)=t^2(L+t)^{p-2}=\frac{t^2}{(L+t)^{2-p}},
  \qquad t\ge0.
\end{equation}
The function \(\Psi_L\) is continuous, strictly increasing, and satisfies \(0\le\Psi_L(t)\le t^p\).

\begin{lemma}\label{lem:bounded-coercivity}
Let \(1<p<2\). If \(H(\eta)\le L\), then
\begin{equation}\label{eq:bounded-coercivity}
  (\cA_H(\xi)-\cA_H(\eta))\cdot(\xi-\eta)
  \ge c\Psi_L\bigl(H(\xi-\eta)\bigr).
\end{equation}
Moreover,
\begin{equation}\label{eq:Psi-inversion-rough}
  t\le C\bigl[\Psi_L(t)^{\frac{1}{p}}+L^{\frac{2-p}{2}}\Psi_L(t)^{\frac{1}{2}}\bigr]
  \qquad t\ge0,
\end{equation}

\end{lemma}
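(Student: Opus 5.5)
The plan is to obtain both parts of \cref{lem:bounded-coercivity} from \cref{lem:monotonicity}(ii), using elementary properties of $\Psi_L$.

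\emph{Step 1: the coercivity bound.} Set $t=H(\xi-\eta)$. By \eqref{eq:weighted-monotonicity} it suffices to show
\[
[H(\xi)+H(\eta)]^{p-2}t^2\ge c\,(L+t)^{p-2}t^2 .
\]
Since $p-2<0$, this follows once $H(\xi)+H(\eta)\le C(L+t)$. The triangle inequality gives $H(\xi)\le H(\eta)+H(\xi-\eta)\le L+t$, and hence
\[
H(\xi)+H(\eta)\le 2L+t\le 2(L+t).
\]
Therefore $[H(\xi)+H(\eta)]^{p-2}\ge 2^{p-2}(L+t)^{p-2}$, which yields \eqref{eq:bounded-coercivity} with $c$ replaced by $2^{p-2}c$. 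The degenerate case $\xi=\eta=0$ is trivial. If the bracket in \eqref{eq:weighted-monotonicity} vanishes, then $\xi=\eta=0$ as well, so the bound holds in every case.

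\emph{Step 2: the inversion bound \eqref{eq:Psi-inversion-rough}.} I split according to the size of $t$ relative to $L$.

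If $t\ge L$, then $L+t\le 2t$ and $(L+t)^{p-2}\ge (2t)^{p-2}$. Hence $\Psi_L(t)\ge 2^{p-2}t^p$, that is,
\[
t\le 2^{\frac{2-p}{p}}\Psi_L(t)^{\frac1p}.
\]

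If $t<L$, then $L+t<2L$, so $\Psi_L(t)\ge (2L)^{p-2}t^2$. Taking square roots,
\[
t\le 2^{\frac{2-p}{2}}L^{\frac{2-p}{2}}\Psi_L(t)^{\frac12}.
\]

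Adding the two cases gives \eqref{eq:Psi-inversion-rough} with $C=2^{(2-p)/p}$. This constant dominates $2^{(2-p)/2}$ because $p<2$. The case $L=0$ is included: then $t\ge L$ always holds, and $\Psi_0(t)=t^p$.

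Neither step is a real obstacle. The only point needing care is keeping track of the direction of the inequalities, since the exponent $p-2$ is negative. The basic properties of $\Psi_L$ quoted before the lemma (monotonicity and $\Psi_L\le t^p$) are not needed in this argument.
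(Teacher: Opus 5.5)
Your proof is correct and follows the paper's argument essentially line by line. The triangle inequality gives $H(\xi)+H(\eta)\le 2(L+t)$, and since $p-2<0$, Lemma~\ref{lem:monotonicity}(ii) then yields the coercivity bound; the inversion bound comes from the same case split, $t\ge L$ versus $t<L$. Your explicit constants and your treatment of the degenerate cases ($\xi=\eta=0$ and $L=0$) only make precise what the paper leaves implicit.
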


\begin{proof}
Put \(d=H(\xi-\eta)\). By the triangle inequality,
\begin{align*}
  H(\xi)+H(\eta)&= H((\xi-\eta)+\eta)+H(\eta)\\
  &\leq H(\xi-\eta)+2H(\eta)\\
  &\leq d+2L\\
  &\le2(d+L).
\end{align*}
Since \(p-2<0\), \eqref{eq:weighted-monotonicity} gives \eqref{eq:bounded-coercivity}. If \(t\ge L\), then \(L+t\le2t\) and \(\Psi_L(t)^{\frac{1}{p}}\ge ct\). If \(0\le t<L\), then
\[
  L^{\frac{2-p}{2}}\Psi_L(t)^{\frac{1}{2}}
  =t\left(\frac{L}{L+t}\right)^{\frac{2-p}{2}}\ge ct.
\]
This proves \eqref{eq:Psi-inversion-rough}.
\end{proof}

\begin{lemma}\label{lem:distribution-inversion}
Let \(z\ge0\) be measurable and let \(\mu(t)=|\{z>t\}|\). Suppose that, for some \(a,b,\alpha,\beta>0\),
\begin{equation}\label{eq:two-phase-distribution}
  -\mu'(t)\ge c\min\{a\mu(t)^{1+\alpha},\ b\mu(t)^{1+\beta}\}
\end{equation}
for almost every \(t>0\) such that \(\mu(t)>0\). Then
\begin{equation}\label{eq:two-phase-inverted}
  z^*(s)\le C\bigl(a^{-1}s^{-\alpha}+b^{-1}s^{-\beta}\bigr)
  \qquad 0<s<m.
\end{equation}
\end{lemma}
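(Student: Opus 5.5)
The plan is to integrate the differential inequality \eqref{eq:two-phase-distribution} along the level parameter $t$, exactly as one does for the model ODE $-\mu'=a\mu^{1+\alpha}$. For that ODE, $\frac{d}{dt}\mu^{-\alpha}=\alpha a$, so $\mu(t)\le(\alpha a t)^{-1/\alpha}$, which inverts to $z^*(s)\le s^{-\alpha}/(\alpha a)$. For the two-phase minimum, the only extra observation needed is the pointwise bound
\[
 \frac{1}{\min\{a\sigma^{1+\alpha},\,b\sigma^{1+\beta}\}}\le a^{-1}\sigma^{-1-\alpha}+b^{-1}\sigma^{-1-\beta},
 \qquad \sigma>0 .
\]

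Concretely, put $\varphi(\sigma)=c\min\{a\sigma^{1+\alpha},b\sigma^{1+\beta}\}$, which is continuous and positive on $(0,\infty)$. Define
\[
 \Phi(\sigma)=\int_\sigma^m\frac{\dd\tau}{\varphi(\tau)},\qquad 0<\sigma\le m .
\]
Then $\Phi$ is nonnegative, decreasing, and locally Lipschitz on $(0,m]$. By the pointwise bound above,
\[
 \Phi(\sigma)\le \frac1c\Big(\frac{a^{-1}}{\alpha}\sigma^{-\alpha}+\frac{b^{-1}}{\beta}\sigma^{-\beta}\Big).
\]

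Next, fix $T>0$ with $\mu(T)>0$, so that $\mu\ge\mu(T)>0$ on $[0,T]$. Since $\mu$ is nonincreasing and $\Phi$ is decreasing, the function $g=\Phi\circ\mu$ is nondecreasing on $[0,T]$. Because $\mu$ takes values in $[\mu(T),m]$, where $\Phi$ is Lipschitz, the chain rule holds at every point where $\mu$ is differentiable, hence for a.e. $t$. At such points \eqref{eq:two-phase-distribution} gives
\[
 g'(t)=-\frac{\mu'(t)}{\varphi(\mu(t))}\ge 1 .
\]
The delicate point is that $\mu$ may have jumps or singular parts, so $g$ need not be absolutely continuous. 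This causes no harm: for a nondecreasing function one always has $g(T)-g(0)\ge\int_0^T g'\,\dd t$, and the jumps and singular parts only help. Therefore
\[
 T\le \Phi(\mu(T))-\Phi(\mu(0))\le\Phi(\mu(T))
 \qquad\text{whenever }\mu(T)>0 .
\]

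Finally, invert. Let $0<s<m$ and take any $t<z^*(s)$. By the definition of $z^*$ we have $\mu(t)>s>0$, so the previous step applies. Since $\Phi$ is decreasing,
\[
 t\le\Phi(\mu(t))\le\Phi(s).
\]
Letting $t\uparrow z^*(s)$ gives $z^*(s)\le\Phi(s)$. Combined with the bound on $\Phi$, this yields \eqref{eq:two-phase-inverted} with $C=c^{-1}\max\{\alpha^{-1},\beta^{-1}\}$. The only step requiring care is the monotone chain-rule/integration inequality, which I handle as described rather than assuming absolute continuity of $\mu$.
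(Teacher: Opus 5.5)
Your proof is correct and follows essentially the same route as the paper: both build the decreasing primitive of $1/\varphi$ ($\Gamma$ in the paper, integrated up to $\mu(0)$ rather than your $m$), show $t\le\Phi(\mu(t))$, and invert using $\mu(t)>s$ for $t<z^*(s)$. The only difference is in justifying the integration step: you use Lebesgue's inequality $g(T)-g(0)\ge\int_0^T g'\,\dd t$ for nondecreasing $g$, while the paper uses the BV chain rule plus a sign argument for the singular part of $D(\Gamma\circ\mu)$, so your justification is the more elementary of the two.
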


\begin{proof}
Set \(m_0=\mu(0)=|\{z>0\}|\). The conclusion is trivial if
\(s\geq m_0\), since then \(z^*(s)=0\). For \(0<s<m_0\), define
\[
F(r)
=
c\min\{ar^{1+\alpha},br^{1+\beta}\},
\qquad
\Gamma(r)
=
\int_r^{m_0}\frac{d\rho}{F(\rho)}.
\]
Then \(\Gamma\geq0\) is nonnegative, continuous and strictly decreasing on
\((0,m_0]\).

Since \(\mu\) is nonincreasing, it belongs to
\(BV_{\mathrm{loc}}((0,\infty))\). By the BV chain rule and the assumed
differential inequality, the absolutely continuous part of
\(D(\Gamma\circ\mu)\) satisfies
\[
\frac{dD^a(\Gamma\circ\mu)}{dt}
=
\Gamma'(\mu(t))\mu_a'(t)
=
-\frac{\mu_a'(t)}{F(\mu(t))}
\geq1
\]
whenever \(\mu(t)>0\). Moreover, every singular decrease of \(\mu\)
produces a nonnegative contribution to \(D(\Gamma\circ\mu)\), since
\(\Gamma\) is decreasing. Thus
\[
D(\Gamma\circ\mu)\geq dt.
\]
Consequently,
\(\Gamma\circ\mu-\operatorname{id}\) is nondecreasing, and hence
\[
\Gamma(\mu(t))-t
\geq
\Gamma(\mu(0))
=
\Gamma(m_0)
=
0.
\]
Therefore
\[
t\leq\Gamma(\mu(t)).
\]

If \(0<t<z^*(s)\), then the definition of decreasing rearrangement
gives \(\mu(t)>s\). Since \(\Gamma\) is decreasing,
\[
t\leq\Gamma(\mu(t))\leq\Gamma(s).
\]
Letting \(t\uparrow z^*(s)\), we obtain
\[
z^*(s)\leq\Gamma(s).
\]
Finally,
\[
\begin{aligned}
\Gamma(s)
&\leq
\frac1c\int_s^{m_0}
\left(
a^{-1}\rho^{-1-\alpha}
+
b^{-1}\rho^{-1-\beta}
\right)d\rho\\
&\leq
C\left(
a^{-1}s^{-\alpha}
+
b^{-1}s^{-\beta}
\right).
\end{aligned}
\]
This proves the assertion. 

\end{proof}

\begin{lemma}\label{lem:modular-inversion}
Let \(1<p<2\) and \(L,X,t\ge0\). If
\begin{equation}\label{eq:Psi-assumption}
  \Psi_L(t)\le C_0\bigl(X^{p'}+L^{2-p}X^2\bigr),
\end{equation}
then
\begin{equation}\label{eq:Psi-inverted}
  t^{p-1}\le C\bigl[X+X^{p-1}L^{(2-p)(p-1)}\bigr],
\end{equation}
where \(C\) depends only on \(p\) and \(C_0\).
\end{lemma}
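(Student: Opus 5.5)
The plan is to invert $\Psi_L$ with the rough inversion \eqref{eq:Psi-inversion-rough} of \Cref{lem:bounded-coercivity}, substitute the hypothesis \eqref{eq:Psi-assumption}, raise to the power $p-1$, and absorb the mixed terms by weighted arithmetic–geometric means. Throughout, $C$ denotes a constant depending only on $p$ and $C_0$.

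First, \eqref{eq:Psi-inversion-rough} gives
\[
t\le C\bigl[\Psi_L(t)^{\frac1p}+L^{\frac{2-p}{2}}\Psi_L(t)^{\frac12}\bigr].
\]
Insert \eqref{eq:Psi-assumption} and use $(a+b)^\gamma\le a^\gamma+b^\gamma$ for $\gamma\in(0,1]$, applied with $\gamma=\frac1p$ and with $\gamma=\frac12$. Since $p'/p=\frac1{p-1}$, this yields
\[
t\le C\Bigl[X^{\frac1{p-1}}+L^{\frac{2-p}{p}}X^{\frac2p}+L^{\frac{2-p}{2}}X^{\frac{p'}{2}}+L^{2-p}X\Bigr].
\]

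Next, raise this to the power $p-1\in(0,1)$ and use subadditivity of $s\mapsto s^{p-1}$ again. This gives
\[
t^{p-1}\le C\Bigl[X+L^{\frac{(2-p)(p-1)}{p}}X^{\frac{2(p-1)}{p}}+L^{\frac{(2-p)(p-1)}{2}}X^{\frac p2}+L^{(2-p)(p-1)}X^{p-1}\Bigr].
\]
The first and last terms are exactly $a:=X$ and $b:=X^{p-1}L^{(2-p)(p-1)}$, the two terms in \eqref{eq:Psi-inverted}.

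The only point needing care is the two middle terms. I will check that each is a geometric mean $a^\theta b^{1-\theta}$ with $\theta\in(0,1)$. The powers of $a$ and $b$ in such a product are $X^{\theta+(p-1)(1-\theta)}L^{(2-p)(p-1)(1-\theta)}$.
\begin{itemize}
\item Choosing $1-\theta=\frac1p$ gives $X^{2(p-1)/p}L^{(2-p)(p-1)/p}$, which is the second term.
\item Choosing $1-\theta=\frac12$ gives $X^{p/2}L^{(2-p)(p-1)/2}$, which is the third term.
\end{itemize}
Young's inequality $a^\theta b^{1-\theta}\le a+b$ then bounds both middle terms by $a+b$. This gives \eqref{eq:Psi-inverted}.

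Degenerate cases cause no trouble. When $L=0$, the hypothesis reads $t^p\le C_0X^{p'}$, which gives $t^{p-1}\le CX$ directly. When $X=0$, we get $\Psi_L(t)=0$, so $t=0$.
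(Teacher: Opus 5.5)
Your proof is correct and is essentially the paper's argument. The paper splits directly into the cases $t\ge L$ (where $\Psi_L(t)\ge ct^p$) and $t<L$ (where $\Psi_L(t)\ge cL^{p-2}t^2$), which is exactly the dichotomy packaged in \eqref{eq:Psi-inversion-rough}; it then raises to the power $p-1$ and absorbs the mixed terms by Young's inequality, as you do. Your identification of the mixed terms as $a^{(p-1)/p}b^{1/p}$ and $a^{1/2}b^{1/2}$, with $a=X$ and $b=X^{p-1}L^{(2-p)(p-1)}$, is the correct form of that Young step. The paper's displayed rewriting in the case $t\ge L$ states it inaccurately.
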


\begin{proof}
If \(t\ge L\), then \(\Psi_L(t)\ge ct^p\). Raising \eqref{eq:Psi-assumption} to the power \(\frac{p-1}{p}\) and applying Young inequality gives
\begin{align*}
  t^{p-1} &\le C_0\bigl[X+L^{\frac{(2-p)(p-1)}{p}}X^{\frac{2(p-1)}{p}}\bigr]\\
  &=C_0\bigl[X+X^{\frac{p-1}{p}}(L^{(2-p)}X)^{\frac{1}{p}}\bigr]\\
  &\leq C_0\bigl[X+\frac{p-1}{p}X^{\frac{p-1}{p}}+ \frac{1}{p}L^{(2-p)(p-1)}X^{p-1}  \bigr]\\
  &\leq C\bigl[X+L^{(2-p)(p-1)}X^{p-1} \bigr].
\end{align*}
which is \eqref{eq:Psi-inverted}. If \(0\le t<L\), then \(\Psi_L(t)\ge cL^{p-2}t^2\). Raising the resulting inequality to the power \(\frac{p-1}{2}\) and applying Young inequality gives
\begin{align*}
  t^{p-1} &\leq C_0L^{\frac{(2-p)(p-1)}{2}}\bigl[ X^{p'}+L^{2-p}X^2\bigr]^{\frac{p-1}{2}}\\
  &\leq C_0\bigl[ X^{\frac{p}{2}}L^{\frac{(2-p)(p-1)}{2}}+X^{p-1}L^{(2-p)(p-1)}\bigr]\\
  &\le C\bigl[X+X^{p-1}L^{(2-p)(p-1)}\bigr],
\end{align*}
which proves \eqref{eq:Psi-inverted}. The case \(L=0\) follows directly from \(\Psi_0(t)=t^p\).
\end{proof}

\begin{lemma}\label{lem:rank-weight}
Let \(z\ge0\) be a Sobolev function in \(\Omega\), let \(h\in L^1(\Omega)\) be nonnegative, and assume
\begin{equation}\label{eq:h-critical-zero}
  h=0\qquad\text{a.e. on }\{\nabla z=0\}.
\end{equation}
Let \(I:[0,m]\to[0,\infty)\) be continuous and nondecreasing. Then
\begin{equation}\label{eq:rank-weight-main}
  \int_\Omega I(\mu_z(z(x)))h(x)\,\dd x
  \ge\int_0^m I(r)h^*(r)\,\dd r.
\end{equation}
Consequently, for every \(s\in(0,m)\),
\begin{equation}\label{eq:rank-weight-consequence}
  \int_\Omega I(\mu_z(z(x)))h(x)\,\dd x
  \ge h^*(s)\int_0^s I(r)\,\dd r.
\end{equation}
\end{lemma}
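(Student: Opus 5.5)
The plan is to reduce \eqref{eq:rank-weight-main} to a Hardy--Littlewood type inequality for the pair of functions $I(\mu_z(z(\cdot)))$ and $h$. Set $w(x)=I(\mu_z(z(x)))$. Since $\mu_z$ is nonincreasing and $I$ is nondecreasing, $w$ is a nonincreasing function of $z$: points with large $z$ receive small weights. The heuristic is that $w$ is (up to null sets) equimeasurable with the increasing rearrangement $r\mapsto I(r)$. Granting this, the reversed Hardy--Littlewood inequality $\int_0^m f^*g_*\le\int_\Omega fg$ recalled in the preliminaries, applied with $f=h$ and $g=w$, gives
\[
\int_\Omega w\,h\,\dd x\ \ge\ \int_0^m h^*(r)\,w_*(r)\,\dd r .
\]
It then remains to show $w_*(r)\ge I(r)$, or at least that the right-hand side is at least $\int_0^m I(r)h^*(r)\,\dd r$.

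To compute $w_*$, I would use the following fact: on the set where $z$ is not locally constant, the map $x\mapsto\mu_z(z(x))$ is distributed uniformly on $(0,m)$. More precisely, the push-forward of Lebesgue measure restricted to $\{\nabla z\neq0\}$ under $\mu_z\circ z$ is dominated by Lebesgue measure on $(0,m)$. The reason is that $\mu_z$ has jumps exactly at the values $t$ with $|\{z=t\}|>0$. By Stampacchia's theorem, $\nabla z=0$ a.e. on such plateaus, so off the plateaus $\mu_z(z(x))$ behaves like the rank of $x$.

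This is exactly where hypothesis \eqref{eq:h-critical-zero} enters. Since $h$ vanishes on $\{\nabla z=0\}$, the integral only sees $E=\{\nabla z\ne0\}$. On $E$, the rank map $x\mapsto\mu_z(z(x))$ is measure-preserving from $E$ onto a subset $S$ of $(0,m)$ with $|S|=|E|$, and $h\mapsto h\circ(\text{rank})^{-1}$ transports $h|_E$ to a function $\tilde h$ on $S$. Then
\[
\int_\Omega wh=\int_S I(r)\tilde h(r)\,\dd r,
\]
with $\tilde h$ equimeasurable with $h$. By the one-dimensional Hardy--Littlewood inequality against the nondecreasing weight $I$, this is at least $\int_0^{|S|}I(r)\tilde h^{*}(r)\,\dd r$. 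That bound is not yet the full claim: $S$ may be a proper subset of $(0,m)$, so we need more than rearranging $\tilde h$ onto the left end of $S$.

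I would close the gap by the layer-cake route. Write $I(r)=I(0)+\int_0^r \dd I(\tau)$, which is available since $I$ is continuous and monotone. By Fubini, the claim reduces to two statements: for each $\tau$,
\[
\int_{\{\mu_z(z)>\tau\}} h \ \ge\ \int_\tau^m h^*,
\]
and the same inequality at $\tau=0$. Since $\{\mu_z(z)>\tau\}=\{z<t_\tau\}$ up to plateau sets, and $h=0$ on plateaus, this is a complement statement. We have $\int_{\{z\ge t_\tau\}}h\le\int_0^{|\{z\ge t_\tau\}|}h^*$ by \eqref{eq:HL_levelsets}, and $|\{z\ge t_\tau\}|\le\tau$ off plateaus. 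Subtracting from $\int h=\int_0^m h^*$ gives the layer inequality.

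Finally, \eqref{eq:rank-weight-consequence} follows because $h^*$ is nonincreasing: $\int_0^m Ih^*\ge\int_0^s I(r)h^*(r)\,\dd r\ge h^*(s)\int_0^s I$. I expect the main obstacle to be the careful measure-theoretic handling of the level sets with positive measure. In particular, one must verify that $|\{z\ge t_\tau\}\setminus\{\nabla z=0\}|\le\tau$ using the definition of $\mu_z$ and Stampacchia's theorem.
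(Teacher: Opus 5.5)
Your proof is correct. It rests on the same key fact as the paper's proof but closes the argument differently.

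\emph{The shared key fact.} Both proofs hinge on the sub-uniformity of the rank map $R=\mu_z\circ z$ on the support of $h$. Since $\mu_z$ is right-continuous, $\{R\le\tau\}=\{z\ge z^*(\tau)\}$ holds exactly, not merely up to plateaus. Moreover $|\{z>z^*(\tau)\}|=\mu_z(z^*(\tau))\le\tau$, and Stampacchia's theorem makes $h$ vanish a.e.\ on the level set $\{z=z^*(\tau)\}$. This is precisely the verification you flagged as the main obstacle, and it is immediate. Note that the Hardy--Littlewood bound should be applied to $\{z>z^*(\tau)\}$ (or to $\{z\ge z^*(\tau)\}\setminus\{\nabla z=0\}$), not to $\{z\ge z^*(\tau)\}$ itself, whose measure can exceed $\tau$.

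\emph{How the two proofs conclude.} The paper turns the key fact into $|\{x\in E:R(x)<a\}|\le a$ on $E=\{h>0\}$. It deduces $R_{*,E}(r)\ge r$, hence $[I(R)]_{*,E}\ge I$, and concludes with the lower Hardy--Littlewood inequality $\int_E fg\ge\int f^*g_*$. You instead expand $I(r)=I(0)+\int_{(0,r)}\dd I$ and use Tonelli to reduce to the single-set inequalities $\int_{\{R>\tau\}}h\ge\int_\tau^m h^*$. You prove these by complementation from $\int_A h\le\int_0^{|A|}h^*$ and $\int_\Omega h=\int_0^m h^*$. In effect you reprove the lower Hardy--Littlewood inequality by a layer-cake argument in this special case. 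Your route is more elementary: it needs no increasing rearrangements on a subset and no commuting of $I$ with rearrangement. The paper's version is shorter given those standard tools.

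\emph{The transport detour.} This step is unnecessary, and you misjudged it. Rearranging $\tilde h$ onto the left end of $S$ already yields $\int_0^{|E|}I h^*=\int_0^m I h^*$, because $h^*$ vanishes on $(|E|,m)$. So there was no gap to close there. However, $R$ need not be injective, so the transported function $h\circ R^{-1}$ is not well defined as written. It is good that your final argument does not depend on it.
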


\begin{proof}
Let \(E=\{h>0\}\) and set \(R(x)=\mu_z(z(x))\) on \(E\). We first prove
\begin{equation}\label{eq:rank-subuniform}
  |\{x\in E:R(x)<a\}|\le a
  \qquad(0<a<m).
\end{equation}
Let \(t=z^*(a)\). If \(z(x)<t\), then the definition of the generalized inverse gives \(\mu_z(z(x))>a\). Hence \(R(x)<a\) implies \(z(x)\ge t\). The part where \(z=t\) is negligible in \(E\): Stampacchia's level-set property gives \(\nabla z=0\) almost everywhere on \(\{z=t\}\), and \eqref{eq:h-critical-zero} applies. Since \(\mu_z(t)=|\{z>t\}|\le a\), \eqref{eq:rank-subuniform} follows.

Let \(R_{*,E}\) denote the increasing rearrangement of \(R\) on \(E\). Relation \eqref{eq:rank-subuniform} implies \(R_{*,E}(r)\ge r\) for \(0<r<|E|\). Since \(I\) is continuous and nondecreasing, the increasing rearrangement of \(I(R)\) is at least \(I(r)\). The lower Hardy--Littlewood inequality on \(E\) \cite{BennettSharpley} yields
\[
  \int_E I(R)h\,\dd x
  \ge\int_0^{|E|}[I(R)]_{*,E}(r)h^*(r)\,\dd r
  \ge\int_0^{|E|}I(r)h^*(r)\,\dd r.
\]
Since \(h^*(r)=0\) for \(r>|E|\), this proves \eqref{eq:rank-weight-main}. The monotonicity of \(h^*\) gives \eqref{eq:rank-weight-consequence}.
\end{proof}

\subsection{Interpolation tools}

Finally, for a measurable vector field
\(V:\Omega\to\mathbb R^n\), we use the anisotropic Lorentz notation
\[
L_H^{q,k}(\Omega;\mathbb R^n)
=
\left\{
V:\Omega\to\mathbb R^n:
H(V)\in L^{q,k}(\Omega)
\right\},
\]
endowed with
\[
\|V\|_{L_H^{q,k}(\Omega)}
=
\|H(V)\|_{L^{q,k}(\Omega)}.
\]
Consequently, \(L_H^{q,k}(\Omega;\mathbb R^n)\) coincides, as a set,
with the usual vector-valued Lorentz space
\(L^{q,k}(\Omega;\mathbb R^n)\), and the corresponding quasi-norms are
equivalent. In particular,
\[
\|V\|_{L_H^{q,k}(\Omega)}
\sim
\bigl\||V|\bigr\|_{L^{q,k}(\Omega)},
\]
with equivalence constants depending only on the anisotropy \(H\).

For a compatible couple $(X_0,X_1)$, write
\[
 K(t,h;X_0,X_1)=\inf_{h=h_0+h_1}
 \big(\|h_0\|_{X_0}+t\|h_1\|_{X_1}\big).
\]
If $V:\Om\to\R^n$, the spaces $L^r_H(\Om;\R^n)$ are equipped with the norm
$\|H(V)\|_{L^r}$.  Finite-dimensional norm equivalence and Holmstedt's formulas \cite{BennettSharpley,Holmstedt1970} imply
\begin{align}
 K(t,V;L_H^{q,\infty},L_H^\infty)
 &\sim \sup_{0<r<t^q}r^{\frac1q}[H(V)]^*(r),
 \label{eq:holmstedt-target}\\
 K(\tau,h;L^1,L^{n,1})
 &\sim \int_0^{\tau^{n'}}h^*(r)\,\dd r
 +\tau\int_{\tau^{n'}}^{|\Om|}h^*(r)r^{-\frac{1}{n'}}\,\dd r,
 \label{eq:holmstedt-source}
\end{align}
whenever $0<t^q<|\Om|$ and $0<\tau^{n'}<|\Om|$.  The implicit constants depend only on $n,p$ and the equivalence constants of $H$.

We shall repeatedly use two standard Lorentz H\"older.  If
$0<a,b,c<\infty$, $\frac1c=\frac1a+\frac1b$, then
\begin{equation}\label{eq:Lorentz Holder}
 \|hk\|_{L^{c,\infty}}
 \le C\|h\|_{L^{a,\infty}}\|k\|_{L^{b,\infty}}.
\end{equation}
If $1<r<n$, $0<b\le\infty$, and $r^*=\frac{nr}{n-r}$, then
\begin{equation}\label{eq:lorentz-sobolev}
 \|z\|_{L^{r^*,b}}
 \le C\|H(\nabla z)\|_{L^{r,b}}.
\end{equation}
Here $z\in W_0^{1,r}(\Omega)$ in the Dirichlet case.  In the
Neumann case \eqref{eq:lorentz-sobolev} holds for
$z\in W^{1,r}(\Omega)$ satisfying
$|\supp z|\le \frac{m}{2}$, with a constant depending on the relative
isoperimetric constant.  These inequalities follow from the
rearrangement form of Sobolev's inequality and Lorentz H\"older;
see \cite{BennettSharpley,Mazya2011,ONeil1963}.

\subsection{Distribution functions and SOLA}

For a nonnegative $v$, write
\[
 \mu(t)=|\{v>t\}|,\qquad
 E(t)=\int_{\{v>t\}}H(\nabla v)^p\,\dd x.
\]

\begin{lemma}[Anisotropic distribution inequality]
\label{lem:distribution}
Let $u\in W_0^{1,p}(\Om)$ and assume
that $H$ and domain $\Omega$ satisfy the hypotheses of \Cref{section2}. Then for almost every $t>0$,
\begin{equation}\label{eq:main}
 1 \leq
 C(-\mu'_u(t))^{\frac{p-1}{p}}(\mu_{u}(t))^{-\frac{n-1}{n}}
 \left(-\frac{d}{dt}\int_{\{|u|>t\}}H(\nabla u)^p\;\dd x\right)^{\frac{1}{p}}.
\end{equation}
\end{lemma}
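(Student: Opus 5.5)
The plan is the standard Talenti/Maz'ya level-set argument: combine the coarea formula, the anisotropic isoperimetric inequality, and Hölder's inequality on the level sets of $u$. By \eqref{eq:H_properties} we may replace $u$ by $|u|$, so $H(\nabla|u|)=H(\nabla u)$ a.e.; set $\mu=\mu_u$. For a.e. $t>0$ the superlevel set $\{|u|>t\}$ is a set of finite perimeter, and its reduced boundary agrees up to $\mathcal H^{n-1}$-null sets with $\{|u|=t\}$. Since $u\in W_0^{1,p}(\Om)$, extending $u$ by zero outside $\Om$ puts this set compactly inside $\R^n$ in the sense of perimeters.

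\emph{Step 1: isoperimetry.} We apply the anisotropic (Wulff) isoperimetric inequality
\[
P_H(E)\ge n\kappa_n^{\frac1n}|E|^{\frac{n-1}{n}}
\]
to $E=\{|u|>t\}$. On $\{|u|=t\}$ the outer normal is $\nu=-\nabla u/|\nabla u|$, and $H$ is even and one-homogeneous. Together with the coarea formula (Fleming--Rishel, or the BV version) this gives
\[
P_H(\{|u|>t\})=\int_{\{|u|=t\}}\frac{H(\nabla u)}{|\nabla u|}\,\dd\mathcal H^{n-1}
=-\frac{d}{dt}\int_{\{|u|>t\}}H(\nabla u)\,\dd x
\]
for a.e. $t$. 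We lose nothing by discarding the critical set $\{\nabla u=0\}$, since it carries no perimeter.

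\emph{Step 2: Hölder on the strip.} For $h>0$ we apply Hölder with exponents $p$ and $p'$ on $\{t<|u|\le t+h\}$:
\[
\frac1h\int_{\{t<|u|\le t+h\}}H(\nabla u)\,\dd x
\le\Big(\frac{\mu(t)-\mu(t+h)}{h}\Big)^{\frac{p-1}{p}}
\Big(\frac1h\int_{\{t<|u|\le t+h\}}H(\nabla u)^p\,\dd x\Big)^{\frac1p}.
\]
Letting $h\downarrow0$, the left side tends to $-\frac{d}{dt}\int_{\{|u|>t\}}H(\nabla u)$. The last factor tends to $-\frac{d}{dt}\int_{\{|u|>t\}}H(\nabla u)^p$, which exists a.e. because this function is monotone in $t$.

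For the middle factor we only get an inequality: $\limsup(\mu(t)-\mu(t+h))/h\le-\mu'(t)$ is false for singular jumps. The correct statement is that $-\mu'(t)$ (the a.e. derivative of the monotone function $\mu$) dominates $\int_{\{|u|=t\}}|\nabla u|^{-1}\,\dd\mathcal H^{n-1}$. To handle this cleanly, we perform the Hölder step directly on the coarea integrand:
\[
\int_{\{|u|=t\}}\frac{H(\nabla u)}{|\nabla u|}
\le\Big(\int_{\{|u|=t\}}\frac{1}{|\nabla u|}\Big)^{\frac{p-1}{p}}
\Big(\int_{\{|u|=t\}}\frac{H(\nabla u)^p}{|\nabla u|}\Big)^{\frac1p}.
\]
We then use the two coarea facts
\[
\int_{\{|u|=t\}}\frac{1}{|\nabla u|}\le-\mu'(t),
\qquad
\int_{\{|u|=t\}}\frac{H(\nabla u)^p}{|\nabla u|}=-\frac{d}{dt}\int_{\{|u|>t\}}H(\nabla u)^p,
\]
both valid for a.e. $t$. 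The second identity holds because $H(\nabla u)^p|\nabla u|^{-1}$ vanishes-weighted correctly off the critical set, and $H(\nabla u)^p=0$ on that set.

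\emph{Step 3: combine.} Putting the two steps together gives
\[
n\kappa_n^{\frac1n}\mu(t)^{\frac{n-1}{n}}
\le(-\mu'(t))^{\frac{p-1}{p}}\Big(-\frac{d}{dt}\int_{\{|u|>t\}}H(\nabla u)^p\Big)^{\frac1p}.
\]
Dividing by $\mu(t)^{\frac{n-1}{n}}$ (we may assume $\mu(t)>0$, as otherwise both sides are degenerate) yields \eqref{eq:main} with $C=(n\kappa_n^{1/n})^{-1}$.

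\emph{Main obstacle.} The delicate point is the measure-theoretic justification of the coarea identities for a.e. $t$ with $u$ only in $W^{1,p}_0$. In particular, the inequality for $-\mu'$ must be obtained with the singular part of $D\mu$ and the critical set handled correctly. I would prove it by writing
\[
\mu(t)\ge|\{|u|>t,\ \nabla u=0\}|+\int_t^\infty\!\int_{\{|u|=\tau\}}|\nabla u|^{-1}\,\dd\mathcal H^{n-1}\dd\tau,
\]
via coarea on the noncritical set, and then differentiating the monotone parts.

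The Neumann version, needed later, would follow the same scheme. The difference is that one applies the relative anisotropic isoperimetric inequality to $\{u>t\}$ when $\mu(t)\le m/2$, which is where the domain assumption \eqref{eq:domain} enters. For the stated $W_0^{1,p}$ case the global Wulff inequality suffices.
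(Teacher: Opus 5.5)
Your proposal is correct and is essentially the paper's argument. Both use the coarea formula, H\"older's inequality with exponents $p'$ and $p$, the level-set identifications of $-\mu'(t)$ and of $-\frac{d}{dt}\int_{\{|u|>t\}}H(\nabla u)^p\,\dd x$, and the Wulff isoperimetric inequality $P_H(E)\ge n\kappa_n^{1/n}|E|^{(n-1)/n}$. The only difference is that you apply H\"older on the level set $\{|u|=t\}$ after the coarea formula, whereas the paper applies it on the strip $\{t<|u|\le T\}$ (with an inessential weight $|u|^{p-1}$) and then lets $T\downarrow t$. That limit is legitimate: at every point of differentiability of the monotone function $\mu$, hence for a.e.\ $t$, the quotient $(\mu(t)-\mu(T))/(T-t)$ converges to $-\mu'(t)$, so your side remark that the difference quotient fails ``for singular jumps'' is unfounded, though harmless since you do not rely on it.
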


\begin{proof}
Put $v=|u|$.  Since $u\in W_0^{1,p}(\Om)$ and $\Om$ is bounded. Moreover, the evenness and one-homogeneity of $H$ give
\begin{equation}\label{eq:H-absolute-value}
 H(\nabla v)=H(\nabla u)
 \qquad\text{a.e. on }\{u\ne0\}.
\end{equation}

Fix $0<t<T$ and consider the transition layer
\begin{equation*}
 K(t,T)=E_t\setminus E_T=\{x\in\R^n:t<v(x)\le T\},
\end{equation*}
and
\begin{equation*}
 E_{\tau} = \{v>\tau\}.
\end{equation*}
H\"older's inequality with conjugate exponents $p'$ and $p$ yields
\begin{align}\label{holder inequality}
 &\left(
 \int_{K(t,T)}v^{p-1}H(\nabla v)\,\dd x
 \right)^{p'} \notag\\
 &\qquad\le
 \left(\int_{K(t,T)}v^p\,d x\right)
 \left(
 \int_{K(t,T)}H(\nabla v)^p\,d x
 \right)^{\frac{1}{p-1}}.
\end{align}
Dividing by $(T-t)^{p'}$, using anisotropic coarea formula and  Letting
$T\downarrow t$ to the left side gives
\begin{align}\label{anisotropic coarea formula}
\begin{split}
 &\left(\lim_{T\rightarrow t}\frac{1}{T-t}\int_{K(t,T)}v^{p-1}H(\nabla v)\,dx\right)^{p'}\\
 &=\left(\lim_{T\rightarrow t}\frac{1}{T-t}\int_t^T \int_{\partial E_{\tau}}v^{p-1}\chi_{k(t,T)}
 \frac{H(\nabla v)}{|\nabla v|}
 \,d\mathcal{H}^{n-1}\,d\tau\right)^{p'}\\
 &= \left(\lim_{T\rightarrow t}\frac{1}{T-t}\int_t^T \tau^{p-1}\int_{\partial E_{\tau}}\chi_{k(t,T)}
 \frac{H(\nabla v)}{|\nabla v|}
 \,d\mathcal{H}^{n-1}\,d\tau\right)^{p'}\\
 &= \left(\lim_{T\rightarrow t}\frac{1}{T-t}\int_t^T \tau^{p-1}\int_{\partial E_{\tau}}
 H(\nu)
 \,d\mathcal{H}^{n-1}\,d\tau\right)^{p'}\\
 &= \left(\lim_{T\rightarrow t}\frac{1}{T-t}\int_t^T \tau^{p-1}P_{H}(E_{\tau})\,d\tau\right)^{p'}\\
 &= \left(t^{p-1}P_{H}(E_{t})\right)^{p'}.
\end{split}
\end{align}
Dividing by $(T-t)^{p'}$ and  Letting
$T\downarrow t$ to the right side of \eqref{holder inequality}, since $v\le T$ on $K(t,T)$, one has
\begin{align}\label{eq:Lp-right1}
\begin{split}
 \lim_{T\rightarrow t}\frac{1}{T-t}\int_{K(t,T)}v^p\,dx
 &\le \lim_{T\rightarrow t}\frac{1}{T-t}T^p|K(t,T)|\\
 &=\lim_{T\rightarrow t}\frac{1}{T-t}T^p\bigl(\mu_v(t)-\mu_v(T)\bigr)\\
 &= T^p(-\mu^{'}_v(t)).
 \end{split}
\end{align}
In particular
\begin{align}\label{eq:Lp-right2}
\begin{split}
    \left(\lim_{T\rightarrow t}\frac{1}{T-t}\int_{K(t,T)} H(\nabla v)^p\,dx
 \right)^{\frac{1}{p-1}} &= \left(\lim_{T\rightarrow t}\frac{1}{T-t}\int^{T}_{t} \int_{\partial E_{\tau}}\frac{H(\nabla v)^p}{|\nabla v|}\;d\mathcal{H}^{n-1}\,d\tau\right)^{\frac{1}{p-1}}\\
 &=\left(\int_{\partial E_{t}}\frac{H(\nabla v)^p}{|\nabla v|}\;d\mathcal{H}^{n-1}\right)^{\frac{1}{p-1}}.
\end{split}
 \end{align}
for almost every $t>0$. Substituting \eqref{anisotropic coarea formula}--\eqref{eq:Lp-right2} into
\eqref{holder inequality} and we obtain
\begin{align}\label{anisotropic reduction}
\begin{split}
 (nt^{p-1}k_{n}^{\frac{1}{n}}\left(\mu_{v}(t)\right)^{\frac{n-1}{n}})^{p'} &\leq \left(t^{p-1}P_{H}(E_{t})\right)^{p'}\\
 &\leq
 (T^p(-\mu'_v(t)))^{p'}
 \left(\int_{\partial E_{t}}\frac{H(\nabla v)^p}{|\nabla v|}\;d\mathcal{H}^{n-1}\right)^{\frac{1}{p-1}},
\end{split}
\end{align}
i.e.,
\begin{align}
\begin{split}
 1 \leq
 C (-\mu'_v(t))^{\frac{p-1}{p}}(\mu_{v}(t))^{-\frac{n-1}{n}}
 \left(-\frac{d}{dt}\int_{\{v>t\}}H(\nabla v)^p\;\dd x\right)^{\frac{1}{p}}.
\end{split}
\end{align}

\end{proof}

\begin{remark}[Euclidean reduction]\label{rem:euclidean}
If $H(\xi)=|\xi|$, then $P_H(E_t)=\mathcal{H}^{n-1}(\partial^*E_t)$ and
\[
 \frac{H(\nabla u)^p}{|\nabla u|}=|\nabla u|^{p-1}.
\]
Consequently, \eqref{anisotropic reduction} becomes
\[
 \mathcal{H}^{n-1}(\partial^*E_t)^{p'}
 \le
 -\mu_u'(t)
 \left(
 \int_{\{|u|=t\}}|\nabla u|^{p-1}
 \,d\mathcal{H}^{n-1}
 \right)^{\frac{1}{p-1}},
\]
which is precisely Maz'ya's Lemma~2, \cite{Mazya1969}.
\end{remark}

\begin{remark}[anisotropic coarea formula]\label{eq:anisotropic isoperimetric inequality}
For almost every $t>0$, define the anisotropic surface measure
\[
 \dd P_H=H(\nu_{E_t})\,d\mathcal{H}^{n-1}
 \quad\text{on }\partial^*E_t.
\]
The level-set normal identity and the evenness of $H$ imply
\[
 \int_{\{|u|=t\}}
 \frac{H(\nabla u)^p}{|\nabla u|}\,d\mathcal{H}^{n-1}
 =\int_{\partial^*E_t}H(\nabla u)^{p-1}\,d P_H.
\]
Thus \eqref{anisotropic coarea formula} is an entirely intrinsic anisotropic
analogue of the original Euclidean inequality.

(Isoperimetric inequality of anisotropic operator perimeter).\;
If $E$ is a set of finite perimeter in $\mathbb{R}^n$, then
\vskip0.1cm
\begin{align*}
n\kappa_n^{\frac{1}{n}}|E|^{\frac{n-1}{n}} \leq P_{H}(E),
\end{align*}\[\]
and equality holds if and only if, the set $E$ is homothetic to $\mathcal{W}_{H}$.
\end{remark}

If $f$ belongs to the dual of the energy space, weak solutions have their usual variational meaning. For a general $f\in L^1(\Om)$, $\int f\phi \dd x$ need not be well defined. In these cases the most natural way is to proceed by an approximation method as in \cite{dal1997some,DallAglio1996} (see also \cite{boccardo1992nonlinear,boccardo1989non}). The idea consists in fixing the solution as the limit of a sequence of functions which, choose $f_k\in L^\infty(\Om)$ with $f_k\to f$ in $L^1$, impose $\int_\Om f_k=0$ in the Neumann case, solve the regularized problems, and take the almost-everywhere limit supplied by the standard $L^1$ theory for monotone operators.  The resulting gradient is independent of the approximating sequence. We shall refer to
this kind of solution as SOLA, for Solution Obtained as Limit of Approximations
(see \cite{alvino2008nonlinear,boccardo1989non}). Other equivalent notions of solutions have been introduced; we recall the
entropy solution (see \cite{BenilanEtAl1995,boccardo1996existence}) and the renormalized solution (see \cite{DalMasoEtAl1999,lions1965solutions,murat1998equations}). We emphasize that, in any case, they propose the aim to carry out the same limit procedure as that mentioned above. Moreover we point out the existence and uniqueness for SOLAs (see \cite{ACMM,alvino2008nonlinear,prignet1995remarks}). In particular, it turns out
that the weak solution and the approximate solution agree, whenever the former is well defined is the content of the next lemma. The definition of SOLAs are as follows:

\begin{definition}\label{def:approximable}
Let $f\in L^1(\Omega)$. A finite function $u$ is an SOLA of the Dirichlet problem if there are
$f_k\in L^\infty(\Omega)$, $f_k\to f$ in $L^1(\Omega)$, and weak
solutions $u_k\in W_0^{1,p}(\Omega)$ such that, along a subsequence,
\[
 u_k\to u,\qquad \nabla u_k\to\nabla u
 \quad\text{a.e. in }\Omega.
\]
For the Neumann problem we additionally impose
$\int_\Omega f_k=0$, take $u_k\in W^{1,p}(\Omega)$, and normalize
$\med(u_k)=0$.  The generalized gradient is determined by
\[
 \nabla T_t(u)=\chi_{\{|u|<t\}}\nabla u,\qquad
 T_t(u)=\max\{-t,\min\{u,t\}\}.
\]
\end{definition}

\begin{lemma}
\label{lem:L1-energy-obstruction-and-passage}
Let \(\Omega\subset\mathbb{R}^n\) be a bounded domain, let
\(1<p<n\), and let $H$ satisfy the hypotheses of \Cref{section2}. Set
\[
\cA_{H}(\xi)
=
H(\xi)^{p-1}\nabla H(\xi)
\qquad
\text{for }\xi\in\mathbb{R}^n\setminus\{0\}.
\]
Then the following assertions hold.

\begin{enumerate}
\item
An arbitrary function $f\in L^1(\Omega)$ need not define a bounded linear functional on $W_0^{1,p}(\Omega)$. More precisely,
\[
L^1(\Omega)\not\subset
\bigl(W^{1,p}_0(\Omega)\bigr)^*
=
W^{-1,p'}(\Omega).
\]
Consequently, for a general \(L^1\)-integrable, the usual energy
formulation
\begin{equation}
\label{eq:formal-energy-formulation}
\int_\Omega \cA_{H}(\nabla u)\cdot\nabla\varphi\,dx
=
\int_\Omega f\varphi\,dx,
\qquad
\varphi\in W^{1,p}_0(\Omega),
\end{equation}
need not be well defined, and one cannot directly work with an
energy solution \(u\in W^{1,p}_0(\Omega)\).

The analogous problem holds for the Neumann problem, even under
the compatibility condition
\[
\int_\Omega f\,dx=0.
\]

\item
Let \(0<q\leq\infty\), \(\alpha>0\), and \(\beta\geq0\).
Suppose that, for every pair of bounded data
\[
F,G\in L^\infty(\Omega),
\]
the corresponding weak solutions \(u_F\) and \(u_G\), subject to
the same homogeneous boundary condition, satisfy
\begin{equation}
\label{eq:bounded-data-stability-general}
\bigl\|H(\nabla u_F-\nabla u_G)\bigr\|_{L^{q,\infty}(\Omega)}
\leq
C
\|F-G\|_{L^1(\Omega)}^\alpha
\bigl(
\|F\|_{L^1(\Omega)}+\|G\|_{L^1(\Omega)}
\bigr)^\beta,
\end{equation}
where \(C\) is independent of \(F\) and \(G\). In the Neumann case,
\(F\) and \(G\) are assumed to have zero mean, and all solutions are
taken with a fixed common normalization.

Then the same estimate holds for the SOLA
corresponding to arbitrary data \(f,g\in L^1(\Omega)\); namely,
\begin{equation}
\label{eq:L1-data-stability-general}
\bigl\|H(\nabla u_f-\nabla u_g)\bigr\|_{L^{q,\infty}(\Omega)}
\leq
C
\|f-g\|_{L^1(\Omega)}^\alpha
\bigl(
\|f\|_{L^1(\Omega)}+\|g\|_{L^1(\Omega)}
\bigr)^\beta.
\end{equation}
In the Neumann case, the data \(f\) and \(g\) are required to have
zero mean.
\end{enumerate}
\end{lemma}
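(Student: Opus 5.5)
Part (1) calls for a counterexample, and part (2) for an approximation-plus-lower-semicontinuity argument.

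\emph{Part (1).} Since $p<n$, the Sobolev embedding of $W_0^{1,p}(\Omega)$ does not reach $L^\infty$. Fix $x_0\in\Omega$ and a small ball $B_\rho(x_0)\Subset\Omega$. Take the test function
\[
\varphi(x)=|x-x_0|^{-\gamma}\eta(x)
\]
with a cutoff $\eta$ and $0<\gamma<\frac{n-p}{p}$. Then $\varphi\in W_0^{1,p}(\Omega)$, because $|\nabla\varphi|^p\sim|x-x_0|^{-(\gamma+1)p}$ is integrable exactly when $(\gamma+1)p<n$. Next take
\[
f(x)=|x-x_0|^{-n}\bigl(\log\tfrac{1}{|x-x_0|}\bigr)^{-2}\chi_{B_\rho(x_0)}.
\]
This $f$ lies in $L^1$, while $\int f\varphi=\infty$ for every $\gamma>0$. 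Hence $f$ is not a bounded functional, and the pairing in \eqref{eq:formal-energy-formulation} is undefined.

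For the Neumann case, subtract the mean of $f$. This keeps $f$ in $L^1$ and changes $\int f\varphi$ only by a finite amount, since $\varphi\in L^1$. The same $\varphi$, which lies in $W^{1,p}(\Omega)$, then still gives $\int f\varphi=\infty$.

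\emph{Part (2).} Let $f,g\in L^1$, and choose approximations $F_k,G_k\in L^\infty$ with $F_k\to f$ and $G_k\to g$ in $L^1$. These are the sequences from Definition~\ref{def:approximable}; in the Neumann case they have zero mean and the solutions are normalized by the median. Along a subsequence, $\nabla u_{F_k}\to\nabla u_f$ and $\nabla u_{G_k}\to\nabla u_g$ a.e.

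The key point is that the SOLA gradient does not depend on the approximating sequence. This lets us use one common subsequence for both data, so that $H(\nabla u_{F_k}-\nabla u_{G_k})\to H(\nabla u_f-\nabla u_g)$ a.e. by continuity of $H$.

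By hypothesis \eqref{eq:bounded-data-stability-general},
\[
\bigl\|H(\nabla u_{F_k}-\nabla u_{G_k})\bigr\|_{L^{q,\infty}}
\le C\|F_k-G_k\|_{L^1}^\alpha\bigl(\|F_k\|_{L^1}+\|G_k\|_{L^1}\bigr)^\beta,
\]
and the right-hand side converges to the right-hand side of \eqref{eq:L1-data-stability-general}.

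It remains to pass to the limit on the left. I would use the Fatou property of $L^{q,\infty}$. If $z_k\to z$ a.e., then for each $t$ we have $\chi_{\{|z|>t\}}\le\liminf_k\chi_{\{|z_k|>t\}}$ a.e. Fatou's lemma then gives $\mu_z(t)\le\liminf_k\mu_{z_k}(t)$, and therefore
\[
t\,\mu_z(t)^{1/q}\le\liminf_k\|z_k\|_{L^{q,\infty}}.
\]
Taking the supremum over $t$ yields $\|z\|_{L^{q,\infty}}\le\liminf_k\|z_k\|_{L^{q,\infty}}$. For $q=\infty$ the same argument gives $\operatorname{ess\,sup}|z|\le\liminf_k\|z_k\|_\infty$.

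The main obstacle is justifying the simultaneous a.e. convergence of both gradient sequences to the SOLA gradients. I would invoke the uniqueness statement of the cited $L^1$ theory: every approximating sequence has a subsequence whose gradients converge a.e. to the same limit. Extracting subsequences successively, first for $F_k$ and then for $G_k$, gives a common index set. In the Neumann case one must also check that the median normalization is preserved in the limit. This does not affect the gradients, so the estimate is unaffected.
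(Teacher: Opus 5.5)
Your proposal is correct and follows the paper's proof essentially step for step. Part (i) uses the same singular test function $\varphi=\eta|x-x_0|^{-\gamma}$ with $\gamma<\frac{n-p}{p}$, and mean subtraction in the Neumann case; your log-corrected $f$ in place of the paper's $\eta|x-x_0|^{-a}$, $n-\gamma\le a<n$, is only a cosmetic change. Part (ii) uses the same common a.e.-convergent subsequence together with the Fatou (lower-semicontinuity) property of $L^{q,\infty}$. The one step you state too quickly is the passage from $\int_\Omega f\varphi\,dx=\infty$ to unboundedness of the functional. The paper closes this with the truncations $\min\{\varphi,j\}$: they are bounded in $W^{1,p}_0(\Omega)$, while $\int_\Omega f\min\{\varphi,j\}\,dx\uparrow\infty$ by monotone convergence.
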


\begin{proof}
We divide the proof into two steps.

\medskip
\noindent
\emph{Proof of \({\rm(i)}\).}
Fix \(x_0\in\Omega\) and \(r>0\) such that
\[
\overline{B_{2r}(x_0)}\subset\Omega.
\]
Choose a cutoff function
\[
\eta\in C_c^\infty(B_{2r}(x_0)),
\qquad
0\leq\eta\leq1,
\qquad
\eta=1
\quad\text{in }B_r(x_0).
\]
Since \(p<n\), one can choose numbers \(\gamma\) and \(a\) satisfying
\[
0<\gamma<\frac{n-p}{p},
\qquad
n-\gamma\leq a<n.
\]
Define
\[
\varphi(x)
=
\eta(x)|x-x_0|^{-\gamma},
\qquad
f(x)
=
\eta(x)|x-x_0|^{-a}.
\]
Since \(a<n\), one has
\[
f\in L^1(\Omega).
\]
Moreover,
\[
|\nabla\varphi(x)|
\leq
C\bigl(
|x-x_0|^{-\gamma}
+
|x-x_0|^{-\gamma-1}
\bigr),
\]
for almost every \(x\in B_{2r}(x_0)\). The condition
\[
p(\gamma+1)<n,
\]
therefore gives
\[
\varphi\in W^{1,p}_0(\Omega).
\]
On the other hand, since \(a+\gamma\geq n\),
\[
\begin{aligned}
\int_\Omega f\varphi\,dx
&\geq
\int_{B_r(x_0)}
|x-x_0|^{-a-\gamma}\,dx
\\
&=
n\omega_n
\int_0^r
\rho^{n-1-a-\gamma}\,d\rho
=
\infty.
\end{aligned}
\]
Thus, the product \(f\varphi\) need not even be integrable for
\(f\in L^1(\Omega)\) and \(\varphi\in W^{1,p}_0(\Omega)\).

To obtain the failure of boundedness explicitly, define
\[
\varphi_j=T_j(\varphi)=\min\{\varphi,j\}.
\]
Then
\[
\varphi_j\in W^{1,p}_0(\Omega)\cap L^\infty(\Omega)
\]
and, by the Sobolev chain rule,
\[
\nabla\varphi_j
=
\mathbf{1}_{\{\varphi<j\}}\nabla\varphi
\qquad\text{a.e. in }\Omega.
\]
Consequently,
\[
\sup_{j\in\mathbb{N}}
\|\varphi_j\|_{W^{1,p}_0(\Omega)}
\leq
\|\varphi\|_{W^{1,p}_0(\Omega)}
<\infty.
\]
However, the monotone convergence theorem yields
\[
\int_\Omega f\varphi_j\,dx
\longrightarrow
\int_\Omega f\varphi\,dx
=
\infty.
\]
Hence the map
\[
\varphi\longmapsto\int_\Omega f\varphi\,dx
\]
is not a bounded linear functional on \(W^{1,p}_0(\Omega)\). This
proves
\[
L^1(\Omega)\not\subset W^{-1,p'}(\Omega).
\]

The same construction also gives the corresponding obstruction for
the Neumann problem. Indeed, set
\[
\widetilde f
=
f-\frac{1}{|\Omega|}\int_\Omega f\,dx.
\]
Then
\[
\widetilde f\in L^1(\Omega),
\qquad
\int_\Omega\widetilde f\,dx=0.
\]
Since \(\varphi\in L^1(\Omega)\), one has
\[
\sup_{j\in\mathbb{N}}\int_\Omega\varphi_j\,dx
\leq
\int_\Omega\varphi\,dx
<\infty,
\]
and therefore
\[
\int_\Omega\widetilde f\,\varphi_j\,dx
=
\int_\Omega f\varphi_j\,dx
-
\frac{1}{|\Omega|}
\left(\int_\Omega f\,dx\right)
\left(\int_\Omega\varphi_j\,dx\right)
\longrightarrow\infty.
\]
Thus, even the zero-mean condition does not ensure that an arbitrary
\(L^1\)-datum belongs to the dual of the Neumann energy space.

\medskip
\noindent
\emph{Proof of \({\rm(ii)}\).}
Let \(f,g\in L^1(\Omega)\), and choose bounded approximations
\[
F_k,G_k\in L^\infty(\Omega)
\]
such that
\begin{equation}
\label{eq:L1-approximating-data}
F_k\longrightarrow f,
\qquad
G_k\longrightarrow g
\qquad
\text{in }L^1(\Omega).
\end{equation}
In the Neumann case, the approximations may be chosen to satisfy
\[
\int_\Omega F_k\,dx
=
\int_\Omega G_k\,dx
=
0.
\]
Indeed, starting from arbitrary bounded approximations, one may apply
the projection
\[
Ph
=
h-\frac{1}{|\Omega|}\int_\Omega h\,dx.
\]
Since
\[
\|Ph\|_{L^1(\Omega)}
\leq
2\|h\|_{L^1(\Omega)},
\]
the projection \(P\) is bounded on \(L^1(\Omega)\), and it preserves
the required convergence whenever the limiting datum has zero mean.

Let \(u_{F_k}\) and \(u_{G_k}\) denote the corresponding weak solutions. By the defining stability property of SOLA, after passing to a common subsequence,
\[
\nabla u_{F_k}\longrightarrow\nabla u_f,
\qquad
\nabla u_{G_k}\longrightarrow\nabla u_g
\qquad
\text{a.e. in }\Omega.
\]
The continuity of \(H\) therefore implies that
\[
Z_k
:=
H(\nabla u_{F_k}-\nabla u_{G_k})
\longrightarrow
Z
:=
H(\nabla u_f-\nabla u_g)
\qquad
\text{a.e. in }\Omega.
\]

Assume first that \(0<q<\infty\). For every fixed \(\lambda>0\),
almost everywhere convergence gives
\[
\mathbf{1}_{\{Z>\lambda\}}
\leq
\liminf_{k\to\infty}
\mathbf{1}_{\{Z_k>\lambda\}}
\qquad
\text{a.e. in }\Omega.
\]
Fatou's lemma then yields
\[
|\{Z>\lambda\}|
\leq
\liminf_{k\to\infty}
|\{Z_k>\lambda\}|.
\]
Consequently,
\[
\lambda|\{Z>\lambda\}|^{1/q}
\leq
\liminf_{k\to\infty}
\|Z_k\|_{L^{q,\infty}(\Omega)}.
\]
Taking the supremum over \(\lambda>0\), we obtain the lower
semicontinuity property
\begin{equation}
\label{eq:weak-Lorentz-lower-semicontinuity}
\|Z\|_{L^{q,\infty}(\Omega)}
\leq
\liminf_{k\to\infty}
\|Z_k\|_{L^{q,\infty}(\Omega)}.
\end{equation}
If \(q=\infty\), the analogous conclusion follows from the standard
lower semicontinuity of the essential supremum:
\[
\|Z\|_{L^\infty(\Omega)}
\leq
\liminf_{k\to\infty}
\|Z_k\|_{L^\infty(\Omega)}.
\]

Applying \eqref{eq:bounded-data-stability-general} to \(F_k\) and
\(G_k\), and then using
\eqref{eq:weak-Lorentz-lower-semicontinuity}, gives
\[
\begin{aligned}
\bigl\|H(\nabla u_f-\nabla u_g)\bigr\|_{L^{q,\infty}(\Omega)}
&\leq
C\liminf_{k\to\infty}
\|F_k-G_k\|_{L^1(\Omega)}^\alpha
\\
&\quad\times
\bigl(
\|F_k\|_{L^1(\Omega)}
+
\|G_k\|_{L^1(\Omega)}
\bigr)^\beta.
\end{aligned}
\]
By \eqref{eq:L1-approximating-data} and the reverse triangle
inequality,
\[
\|F_k-G_k\|_{L^1(\Omega)}
\longrightarrow
\|f-g\|_{L^1(\Omega)},
\]
and
\[
\|F_k\|_{L^1(\Omega)}
+
\|G_k\|_{L^1(\Omega)}
\longrightarrow
\|f\|_{L^1(\Omega)}
+
\|g\|_{L^1(\Omega)}.
\]
Passing to the limit proves
\eqref{eq:L1-data-stability-general}.
\end{proof}

\section{The individual weak endpoint for all
\texorpdfstring{$1<p<n$}{1<p<n}}\label{section3}

The proof in this section uses only the identity
$\cA_{H}(\nabla u)\cdot\nabla u=H(\nabla u)^p$ and therefore avoids the
difference obstruction in the singular range.

\begin{lemma}\label{lem:weak-estimate-solution}
Let $1<p<n$, let $f\in L^{1}(\Omega)$, and let $u$ be the
SOLA of \eqref{eq:poisson}. Set
\[
    r_{0}=\frac{n(p-1)}{n-p}.
\]
For the Dirichlet problem,
\[
    (u_{\pm})^{*}(s)
    \leq
    C\|f\|_{L^{1}(\Omega)}^{\frac{1}{p-1}}s^{\frac{p-n}{n(p-1)}},
    \qquad 0<s<|\Omega|,
\]
where
\[
    u_{+}=\max\{u,0\},
    \qquad
    u_{-}=\max\{-u,0\}.
\]
For the Neumann problem, the same estimate holds after
the normalization $\operatorname{med}(u)=0$, separately for $u_{+}$
and $u_{-}$. The constant $C$ depends only on $n,p,\lambda,\Lambda$, and the relevant isoperimetric constant.
\end{lemma}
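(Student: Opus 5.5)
We plan to prove the estimate first for bounded data $f_k\in L^\infty(\Omega)$, with weak solutions $u_k$, using constants independent of $k$. We then pass to the SOLA limit: $u_k\to u$ a.e., so $\mu_{(u_\pm)}(t)\le\liminf_k\mu_{(u_k)_\pm}(t)$ by Fatou, exactly as in the lower semicontinuity argument of \Cref{lem:L1-energy-obstruction-and-passage}(ii). This transfers a bound on the distribution function to the rearrangement. So we may take $u$ to be a weak solution and treat $u_+$; the case of $u_-$ is symmetric, replacing $u$ by $-u$ and using the evenness of $H$.

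\textbf{Energy bound on level sets.} For $t>0$ and $h>0$ we test the equation with $\varphi=T_h(u-t)_+=\min\{(u-t)_+,h\}$, which lies in $W_0^{1,p}$. In the Neumann case it lies in $W^{1,p}$ with support in $\{u>t\}\subset\{u>\med u\}$, so its support has measure $\le m/2$. Since $\beta$ is nondecreasing with $\beta(0)=0$, we have $\beta(u)\varphi\ge0$, and this term can be dropped. Using $\cA_H(\nabla u)\cdot\nabla u=H(\nabla u)^p$, we obtain
\[
\int_{\{t<u\le t+h\}}H(\nabla u)^p\,\dd x\le h\|f\|_{L^1(\Omega)}.
\]
Dividing by $h$ and letting $h\to0$ gives, for a.e. $t$,
\[
-\frac{d}{dt}\int_{\{u>t\}}H(\nabla u)^p\,\dd x\le\|f\|_{L^1(\Omega)}.
\]

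\textbf{Differential inequality.} We insert this into \Cref{lem:distribution}, applied to $u_+$ with the level sets of $u_+$. In the Neumann case we use the relative anisotropic isoperimetric inequality in place of the Wulff one; this is legitimate because $\mu(t)\le m/2$ for $t>0$ after the median normalization. This yields
\[
1\le C(-\mu'(t))^{\frac{p-1}{p}}\mu(t)^{-\frac{n-1}{n}}\|f\|_{L^1(\Omega)}^{\frac1p},
\]
that is,
\[
-\mu'(t)\ge c\,\|f\|_{L^1(\Omega)}^{-\frac{1}{p-1}}\mu(t)^{\frac{n-1}{n}p'}.
\]
The exponent satisfies $\frac{(n-1)p}{n(p-1)}=1+\alpha$ with $\alpha=\frac{n-p}{n(p-1)}>0$, because $p<n$.

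\textbf{Inversion.} We apply \Cref{lem:distribution-inversion} with $a=b=c\|f\|_{L^1(\Omega)}^{-1/(p-1)}$ and $\alpha=\beta$. This gives
\[
(u_+)^*(s)\le C\|f\|_{L^1(\Omega)}^{\frac1{p-1}}s^{-\frac{n-p}{n(p-1)}},
\]
which is the claim. If $f=0$, then $u=0$ and the estimate is trivial.

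\textbf{Main obstacle.} The delicate step is justifying \Cref{lem:distribution} and the a.e. differentiation for the Neumann case. There, $u_+\notin W^{1,p}_0$, so the isoperimetric step needs the relative inequality $P_H(E;\Omega)\ge c\,|E|^{(n-1)/n}$ for $|E|\le m/2$. This inequality holds under \eqref{eq:domain}, since Lipschitz domains suffice, and the boundary part of $\partial E_t$ does not contribute to the Neumann test computation. We also need to handle the singular part of $\mu'$. The BV chain-rule argument built into \Cref{lem:distribution-inversion} absorbs jumps of $\mu$, so only the inequality for the absolutely continuous part of $\mu'$ is required.
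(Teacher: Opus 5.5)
Your proposal is correct and follows essentially the same route as the paper: you use the truncated test function $T_h((u-t)_+)$, drop the $\beta(u)$ term by monotonicity, feed the bound $-E'(t)\le\|f\|_{L^1(\Omega)}$ into \Cref{lem:distribution}, treat $u_-$ via $-u$, and pass from bounded data to the SOLA by Fatou on distribution functions. The only cosmetic difference is the inversion step: you cite \Cref{lem:distribution-inversion} with $a=b$ and equal exponents, whereas the paper integrates $F(t)=\mu(t)^{-1/r_0}$ directly, which is the same comparison argument.
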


\begin{proof}
We first assume that $f\in L^{\infty}(\Omega)$ and prove the estimate
for the positive part
\[
    v=u_{+}.
\]
Set
\[
    M=\|f\|_{L^{1}(\Omega)},
    \qquad
    \mu(t)=|\{v>t\}|,
    \qquad
    E(t)=\int_{\{v>t\}}H(\nabla v)^{p}\,dx.
\]
If $M=0$, uniqueness in the Dirichlet case and Neumann case, imply that $u=0$. Hence there is nothing to prove. We may therefore assume that $M>0$.

For $h>0$, let
\[
    T_{h}(\tau)=\max\{-h,\min\{\tau,h\}\}.
\]
Then, for $t>0$, testing the equation \eqref{eq:Poisson} with
\begin{displaymath}
    \varphi_{t}=T_{h}\bigl((u-t)_{+}\bigr)
    =
    \begin{cases}
      0    & \text{if $u\le t$,}\\
      u-t  & \text{if $t<u\le t+h$}\\   
      h    & \text{if $u>t+h$,}\\
    \end{cases}
\end{displaymath}
gives that
\begin{align*}
    &\int_{\{t<u<t+h\}}H(\nabla u)^{p}\,dx
    +\int_{\{u>t+h\}}\beta(u)h\,dx\\
    &\qquad\le 
    \int_{\{t<u<t+h\}}H(\nabla u)^{p}\,dx    
    +\int_{\{t<u<t+h\}}\beta(u)(u-t)\,dx
    +\int_{\{u>t+h\}}\beta(u)h\,dx\\
    &=
    \int_{\Omega}
    H(\nabla u)^{p-1}
    \nabla H(\nabla u)\cdot\nabla\varphi_t\,dx
    +\int_{\Omega}\beta(u)\varphi_t\,dx\\
    &=
    \int_{\Omega}f\varphi_t\,dx
    \leq hM,
\end{align*}
where, we used that 
\[
    H(\nabla u)^{p-1}
    \nabla H(\nabla u)\cdot\nabla u
    =
    H(\nabla u)^{p},
\]
the fact that $\beta$ is monotonically increasing, and that $\beta(t)(u-t)> 0$ a.e. on $\{t<u<t+h\}$. Consequently,
\begin{displaymath}
    \frac{E(t)-E(t+h)}{h}+\int_{\{u>t+h\}}\beta(u)\,dx \le\, M
\end{displaymath}
for every $h>0$. Letting $h\downarrow0$, we obtain
\begin{displaymath}
    -E'(t) \le -E'(t)+\int_{\{u>t\}}\beta(u)\,dx \le M
    \qquad\text{for a.e. }t>0.
\end{displaymath}
Combining this estimate with the distribution inequality established
above yields
\[
    1
    \leq
    C[-\mu'(t)]^{\frac{1}{p'}}
    \mu(t)^{-\frac{1}{n'}}
    [-E'(t)]^{\frac{1}{p}}
    \leq
    CM^{\frac{1}{p}}
    [-\mu'(t)]^{\frac{1}{p'}}
    \mu(t)^{-\frac{1}{n'}},
\]
for almost every $t>0$ such that $\mu(t)>0$. Raising both sides to
the power $p'$ gives
\begin{equation}
\label{eq:mu-differential-inequality}
    -\mu'(t)
    \geq
    cM^{-\frac{1}{p-1}}\mu(t)^{\frac{p'}{n'}},
\end{equation}
for almost every $t>0$ with $\mu(t)>0$.

Observe that
\[
\begin{aligned}
    \frac{p'}{n'}
    &=
    \frac{p(n-1)}{n(p-1)}
\\
    &=
    1+\frac{n-p}{n(p-1)}
    =
    1+\frac{1}{r_{0}}.
\end{aligned}
\]
Thus \eqref{eq:mu-differential-inequality} can be rewritten as
\begin{equation}
\label{eq:mu-differential-r0}
    -\mu'(t)
    \geq
    cM^{-\frac{1}{p-1}}
    \mu(t)^{1+\frac{1}{r_{0}}}.
\end{equation}
 On every interval on which $\mu$ is positive, define
\[
    F(t)=\mu(t)^{-\frac{1}{r_{0}}}.
\]
Since $\mu$ is nonincreasing, $F$ is nondecreasing. At almost every
point of differentiability of $\mu$, \eqref{eq:mu-differential-r0}
gives
\[
\begin{aligned}
    F'(t)
    &=
    \frac{1}{r_{0}}
    [-\mu'(t)]
    \mu(t)^{-1-\frac{1}{r_{0}}}
\\
    &\geq
    \frac{c}{r_{0}}
    M^{-\frac{1}{p-1}}.
\end{aligned}
\]
Because $F$ is nondecreasing, its distributional derivative is a
nonnegative Radon measure, and its singular part can only increase
the variation of $F$. Hence, we now integrate \eqref{eq:mu-differential-r0}. for $0<a<t$ such that $\mu(t)>0$,
\[
\begin{aligned}
    F(t)-F(a)
    &\geq
    \int_{a}^{t}F'(\tau)\,d\tau
\\
    &\geq
    \frac{c}{r_{0}}
    M^{-\frac{1}{p-1}}(t-a).
\end{aligned}
\]
Letting $a\downarrow0$ and using $F(a)\geq0$, we obtain
\[
    \mu(t)^{-\frac{1}{r_{0}}}
    \geq
    c_{0}M^{-\frac{1}{p-1}}t.
\]
Equivalently,
\begin{equation}
\label{eq:distribution-weak-estimate}
     \mu(t)
    \leq
    C
    M^{\frac{r_{0}}{p-1}}
    t^{-r_{0}}
    =
    C
    M^{\frac{n}{n-p}}
    t^{-r_{0}}
    \qquad t>0.
\end{equation}
It remains to invert the distribution function. Recall that
the definition of rearrangement and
fix $s\in(0,|\Omega|)$. If $v^{*}(s)=0$, the desired estimate is
immediate. Otherwise, for every $0<t<v^{*}(s)$ satisfying
\[
    \mu(t)>s.
\]
Therefore, by \eqref{eq:distribution-weak-estimate},
\[
    t s^{1/r_{0}}
    <
    t\mu(t)^{\frac{1}{r_{0}}}
    \leq
    CM^{\frac{1}{p-1}}.
\]
Letting $t\uparrow v^{*}(s)$ yields
\[
    v^{*}(s)
    \leq
    CM^{\frac{1}{p-1}}s^{-\frac{1}{r_{0}}}.
\]
we conclude that
\[
    (u_{+})^{*}(s)
    \leq
    C\|f\|_{L^{1}(\Omega)}^{\frac{1}{p-1}}s^{\frac{p-n}{n(p-1)}}.
\]

The estimate for $u_{-}$ follows by applying the same argument to
$-u$. Indeed, since $H$ is even, the vector field
\[
    A(\xi)=H(\xi)^{p-1}\nabla H(\xi)
\]
is odd, and hence $-u$ solves the corresponding equation with function $-f$, whose $L^{1}$ norm is again $M$.

Finally, let $f\in L^{1}(\Omega)$ and let $u$ be the corresponding
SOLA. Choose $f_{j}\in L^{\infty}(\Omega)$ such that
\[
    f_{j}\to f
    \quad\text{in }L^{1}(\Omega).
\]
Let $u_{j}$ be the corresponding weak
solutions. By the definition of SOLA, after
passing to a subsequence,
\[
    u_{j}\to u
    \quad\text{a.e. in }\Omega.
\]
For every $t>0$, Fatou's lemma gives
\[
\begin{aligned}
    |\{u_{+}>t\}|
    &\leq
    \liminf_{j\to\infty}
    |\{(u_{j})_{+}>t\}|
\\
    &\leq
    C
    \liminf_{j\to\infty}
    \|f_{j}\|_{L^{1}(\Omega)}^{\frac{1}{p-1}}
    t^{-r_{0}}
\\
    &=
    C
    \|f\|_{L^{1}(\Omega)}^{\frac{1}{p-1}}
    t^{-r_{0}}.
\end{aligned}
\]
The same estimate holds for $u_{-}$. Inverting these limiting as above and applying definition of SOLA $u$, we have 
\[
    (u_{\pm})^{*}(s)
    \leq
    C\|f\|_{L^{1}(\Omega)}^{\frac{1}{p-1}}s^{\frac{p-n}{n(p-1)}},
\]
and completes the proof.
\end{proof}

\begin{remark}\label{neumann-case-rearrangement}
    In the Neumann case, we normalize the solution by
\[
    \operatorname{med}(u)=0,
\]
with the compatibility condition
\[
    \int_{\Omega}f\,dx=0.
\]
Hence
\[
|\{u>0\}|\leq\frac{|\Omega|}{2},
\qquad
|\{u<0\}|\leq\frac{|\Omega|}{2}.
\]
Consequently, for every \(t>0\),
\[
|\{u_+>t\}|
=
|\{u>t\}|
\leq
\frac{|\Omega|}{2},
\]
and
\[
|\{u_->t\}|
=
|\{u<-t\}|
\leq
\frac{|\Omega|}{2}.
\]
The relative anisotropic isoperimetric inequality therefore yields
\[
|\{u_\pm>t\}|^{\frac{1}{n'}}
\leq
C
P_H(\{u_\pm>t\};\Omega)
\]
for almost every \(t>0\).

Notice that the same argument cannot be used directly
to \(|u|\). Although
\[
\{|u|>t\}
=
\{u>t\}\cup\{u<-t\},
\]
the median normalization does not imply
\[
|\{|u|>t\}|\leq\frac{|\Omega|}{2}.
\]
This is the reason why the distribution estimates must be derived separately for \(u_+\) and \(u_-\).

Since \(-u\) solves the same Neumann problem with function \(-f\). The estimate for \(u_-\) therefore follows from the estimate
for \(u_+\) applied to \(-u\). The two estimates can subsequently be
combined through
\[
|u|^*(s)
\leq
u_+^*\left(\frac{s}{2}\right)+u_-^*\left(\frac{s}{2}\right).
\]
Thus the relative anisotropic isoperimetric inequality, and hence applies separately to $u_{+}$ and $u_{-}$ for Neumann boundary. 

In the proof of interpolation, we set $w = u-v - med(u-v)$. Since a constant translation does not alter the gradient, we have
\[
\nabla w =\nabla u-\nabla v 
\]
and
\[
|\{w>0\}|,|\{w<0\}| \leq \frac{|\Omega|}{2}.
\]
Applying the relative anisotropic isoperimetric inequality to both $w^{+}$ and $w^{-}$ yields similar conclusions. Due to the proof is similar, we will only prove the case for a single solution here.
\end{remark}

\begin{remark}\label{rem:L1-approximable}
In the end of this proof, whenever an estimate is obtained by testing the weak
formulations, the argument is first carried out for bounded data.
The extension to arbitrary \(L^1\)-data and to the corresponding SOLA, this approach is consistent and does not require any additional comment.
\end{remark}

\begin{proof}[Proof of \Cref{thm:individual}]
The argument below should first carried out for bounded data. For a general $f\in L^1(\Omega)$, choose $f_j\in L^\infty(\Omega)$ such that
$f_j\to f$ in $L^1(\Omega)$. To avoid repetition, we write the estimates directly in terms of $f \in L^1(\Omega)$, for more details see the end of Lemma \ref{lem:weak-estimate-solution}.

Let $v=u_+$, $G_+=H(\nabla u)\chi_{\{u>0\}}$, and
$M=\|f\|_{L^1}$.  Fix $0<s<m$ and $\zeta\ge0$ in $L^1(0,s)$, and put
\[
 I(r)=\int_0^{\min\{r,s\}}\zeta(\rho)\,\dd\rho,\qquad
 \Phi(x)=\int_0^{v(x)}I(\mu_v(t))\,\dd t.
\]
By Fubini theorem and Lemma \ref{lem:weak-estimate-solution}, we have
\begin{equation}\label{eq:phi-one}
 \|\Phi\|_{L^\infty}
 \le C M^{\frac{1}{(p-1)}}
 \int_0^s\zeta(\rho)\rho^{\frac{p-n}{n(p-1)}}\,\dd\rho.
\end{equation}
We first notice that
\[
    \Phi(x)=\int_{0}^{v(x)}I(\mu_v(t))\,dt \geq 0,
    \]
the Sobolev chain rule, gives
\[
    \nabla\Phi
    =
    I(\mu_v(v))\nabla v
    =
    I(\mu_v(v))\chi_{\{u>0\}}\nabla u
    \qquad\text{a.e. in }\Omega.
\]
Consequently,
\begin{align*}
    &H(\nabla u)^{p-1}
    \nabla H(\nabla u)\cdot\nabla\Phi
\\
    &\qquad=
    I(\mu_v(v))\chi_{\{u>0\}}
    H(\nabla u)^{p-1}
    \nabla H(\nabla u)\cdot\nabla u
\\
    &\qquad=
    I(\mu_v(v))G_{+}^{p}
    \qquad\text{a.e. in }\Omega.
\end{align*}
Using $\Phi \geq 0$ as a test function, we therefore obtain
\begin{align}
    \int_{\Omega}I(\mu_v(v))G_{+}^{p}\,dx
    &=
    \int_{\Omega}f\Phi\,dx - \int_{\Omega}\beta(u)\Phi\,dx\\
    &\leq
    \|f\|_{L^{1}(\Omega)}
    \|\Phi\|_{L^{\infty}(\Omega)}
\nonumber\\
    &\leq
    CM^{1+\frac{1}{p-1}}
    \int_{0}^{s}\zeta(\rho)\rho^{\alpha}\,d\rho
\nonumber\\
    &=
    CM^{p'}
    \int_{0}^{s}\zeta(\rho)\rho^{\alpha}\,d\rho.
\label{eq:thm12-upper-test}
\end{align}
Indeed, on
$\{u\leq0\}$ one has $G_{+}=0$, whereas on $\{u>0\}$ one has
$\nabla v=\nabla u$. Hence
\[
    G_{+}^{p}=0
    \qquad\text{a.e. on }\{\nabla v=0\}.
\]
Using Lemma \ref{lem:rank-weight} and Fubini theorem, it follows that
\begin{align}\label{eq:thm12-lower-rank}
\begin{split}
\int_{\Omega}I(\mu_v(v))G_{+}^{p}\,dx
    &\geq  
    \int_{0}^{s}(I\circ \mu_v\circ (v^*))_{*}(G_{+}^{p})^{*}(r)\,d\rho\\
    &\geq (G_{+}^{p})^{*}(s)\int_{0}^{s}I(r)\,d\rho\\
    &\geq
    (G_{+}^{p})^{*}(s)
    \int_{0}^{s}\int_{0}^{r}\zeta(\rho)\,\dd \rho \dd r\\
    &=
    [G_{+}^{*}(s)]^{p}
    \int_{0}^{s}\zeta(\rho)(s-\rho)\,d\rho,
    \end{split}
\end{align}
where the last equality we used the fact that the map
$t\mapsto t^{p}$ is continuous and increasing, and hence
\[
    (G_{+}^{p})^{*}(s)=[G_{+}^{*}(s)]^{p}.
\]
Combining \eqref{eq:thm12-upper-test} and
\eqref{eq:thm12-lower-rank}, we obtain
\begin{equation}
\label{eq:thm12-zeta-inequality}
    [G_{+}^{*}(s)]^{p}
    \int_{0}^{s}\zeta(\rho)(s-\rho)\,d\rho
    \leq
    CM^{p'}
    \int_{0}^{s}\zeta(\rho)\rho^{\alpha}\,d\rho.
\end{equation}
Direct calculation gives
\[
\sup\frac{\int_{0}^s\zeta(\rho)(s-\rho)\,\dd\rho}{\int_{0}^s\zeta(\rho)\rho^{\frac{p-n}{n(p-1)}}\,\dd\rho} \ge Cs^{\frac{p(n-1)}{n(p-1)}}.
\]
Taking the supremum over all $\zeta$ on both sides of the equation obtain 
\begin{equation}
\label{eq:thm12-positive-preliminary}
    [G_{+}^{*}(s)]^{p}
    \leq
    CM^{p'}s^{-\frac{p(n-1)}{n(p-1)}}.
\end{equation}
Raising both sides to the power $\frac{1}{p}$, we obtain
\begin{equation}
\label{eq:thm12-positive}
    G_{+}^{*}(s)
    \leq
    CM^{\frac{1}{p-1}}s^{-\frac{1}{q}},
    \qquad 0<s<|\Omega|.
\end{equation}
The same argument applies to the negative part. More precisely, set
\[
    G_{-}
    =
    H(\nabla u)\chi_{\{u<0\}}.
\]
Since the property of $H$, $-u$ solves the corresponding equation with function $-f$, and
\[
    \|-f\|_{L^{1}(\Omega)}=M.
\]
Applying the preceding argument to $-u$ gives
\begin{equation}
\label{eq:thm12-negative}
    G_{-}^{*}(s)
    \leq
    CM^{\frac{1}{p-1}}s^{-\frac{1}{q}},
    \qquad 0<s<|\Omega|.
\end{equation}
By Stampacchia's level-set property,
\[
    \nabla u=0
    \qquad\text{a.e. on }\{u=0\}.
\]
Consequently,
\[
    H(\nabla u)=G_{+}+G_{-}
    \qquad\text{a.e. in }\Omega.
\]
The standard rearrangement inequality
\[
    (a+b)^{*}(s)
    \leq
    a^{*}\left(\frac{s}{2}\right)+b^{*}\left(\frac{s}{2}\right),
    \qquad 0<s<|\Omega|,
\]
together with \eqref{eq:thm12-positive} and
\eqref{eq:thm12-negative}, yields
\begin{align*}
    [H(\nabla u)]^{*}(s)
    &\leq
    G_{+}^{*}\left(\frac{s}{2}\right)+G_{-}^{*}\left(\frac{s}{2}\right)
\\
    &\leq
    CM^{\frac{1}{p-1}}
    \left(\frac{s}{2}\right)^{-1/q}
\\
    &\leq
    \overline{C}M^{\frac{1}{p-1}}s^{-\frac{1}{q}}.
\end{align*}
Therefore,
\begin{equation}
\label{eq:thm12-bounded-data}
    \|H(\nabla u)\|_{L^{q,\infty}(\Omega)}
    =
    \sup_{0<s<|\Omega|}
    s^{\frac{1}{q}}[H(\nabla u)]^{*}(s)
    \leq
    CM^{\frac{1}{p-1}}.
\end{equation}
This argument remains valid when $p<2$, since only the
rearrangement inequality above is used, no triangle
inequality is required. Hence we finish the proof. 
\end{proof}

\begin{remark}[The Neumann case.]
Hence the integration argument of Remark \ref{neumann-case-rearrangement} gives
\[
v^*(r)\leq C M^{\frac{1}{p-1}}
r^{\frac{p-n}{n(p-1)}},\qquad 0<r<m.
\]
Define function $\Phi- \fint_{\Omega}\Phi$ is an test function for Neumann boundary condition with $\int_\Omega f\,dx=0$, which gradient is unchanged. 
\end{remark}

\begin{corollary}\label{cor:infty norm result}
Let $p>1$ and assume that $H$ and domain $\Omega$ satisfy the hypotheses of \Cref{section2}. If
$f\in L^{n,1}(\Omega)$, with zero mean in the Neumann case, then
\begin{equation}\label{eq:lipschitz}
 \|H(\nabla u)\|_{L^\infty(\Omega)}
 \le C\|f\|_{L^{n,1}(\Omega)}^{\frac{1}{p-1}}.
\end{equation}
\end{corollary}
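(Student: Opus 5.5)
This corollary is the global Lipschitz endpoint. I would not rederive it from scratch; I would import it from the anisotropic $L^{n,1}$ gradient theory of Antonini--Cianchi \cite{AntoniniCianchi2025}, which the introduction already cites for exactly this bound. That theory covers every $p>1$ and both domain alternatives in \eqref{eq:domain} (convex domains, or $\partial\Omega\in W^2L^{n-1,1}$). The work is then to check that its hypotheses match ours and that the estimate survives the passage to SOLA.

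\emph{Step 1: bounded data, Dirichlet problem.} Take $f\in L^\infty(\Omega)$, so that $u\in W_0^{1,p}(\Omega)$ is a weak solution. Under \eqref{eq:ellipticity-H} the field $\cA_H$ satisfies the structure conditions required in \cite{AntoniniCianchi2025}: $\nabla^2 F$ is bounded above and below by $H^{p-2}$ up to constants, as in the proof of \Cref{lem:monotonicity}. Their a priori estimate then gives $\|H(\nabla u)\|_{L^\infty}^{p-1}\le C\|f\|_{L^{n,1}}$. Its proof runs through a regularization of $\cA_H$, a Reilly-type boundary inequality whose boundary term is controlled by the $W^2L^{n-1,1}$ curvature, or has a favorable sign on convex domains, and a Moser/Hardy step in which the $L^{n,1}$ norm enters through the duality $L^{n,1}\cdot L^{n',\infty}\hookrightarrow L^1$ in \eqref{eq:Lorentz--Holder}. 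The constant depends only on $n,p,\lambda,\Lambda$ and $\Omega$.

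\emph{Step 2: the term $\beta(u)$.} The monotone term $\beta(u)$ with $\beta(0)=0$ is not covered directly. I would handle it by observing that $\beta(u)$ has the sign of $u$. Testing with truncations, as in \Cref{lem:weak-estimate-solution}, gives $\|\beta(u)\|_{L^{n,1}}\le \|f\|_{L^{n,1}}$; this is the usual $L^{n,1}$-accretivity argument, obtained by testing with $\mathrm{sign}(u)\chi_{\{|\beta(u)|>k\}}$-type functions. One then applies Step 1 to the datum $f-\beta(u)$.

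\emph{Step 3: Neumann problem.} For the Neumann problem with zero-mean $f$, the same reference applies after the normalization $\med(u)=0$, which does not change the gradient.

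\emph{Step 4: general data.} For general $f\in L^{n,1}(\Omega)$, approximate by $f_k=T_k(f)$, projected to zero mean in the Neumann case. Then $\|f_k\|_{L^{n,1}}\le C\|f\|_{L^{n,1}}$, since $|T_k f|\le|f|$ and the mean correction is bounded by $C\|f\|_{L^1}\le C\|f\|_{L^{n,1}}$. The weak solutions $u_k$ satisfy the uniform $L^\infty$ gradient bound, and $\nabla u_k\to\nabla u$ a.e. by \Cref{def:approximable}. The lower semicontinuity of the essential supremum under a.e. convergence, exactly as in part (ii) of \Cref{lem:L1-energy-obstruction-and-passage}, then yields \eqref{eq:lipschitz}.

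The main obstacle is Step 1, specifically the boundary analysis in the non-convex case $\partial\Omega\in W^2L^{n-1,1}$ for the anisotropic field near degenerate points where $\nabla u=0$. If one insisted on a self-contained proof, this is where I would follow \cite{AntoniniCianchi2025} closely. I would first regularize $H^p$ to $(\varepsilon^2+H^2)^{p/2}$ so that the equation becomes uniformly elliptic, and then derive the pointwise inequality for $\operatorname{div}(\cdot)$ of the stress field against $H(\nabla u)^{\,\cdot}$. I would then control the boundary integral by a trace inequality weighted by the curvature in $L^{n-1,1}$. Only constants uniform in $\varepsilon$ may be used.
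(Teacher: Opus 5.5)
Your proposal is correct and takes the same approach as the paper: the paper gives no argument of its own and simply declares \eqref{eq:lipschitz} a specialization of the global anisotropic Lipschitz estimate of \cite{AntoniniCianchi2025}. Your additional checks are details the paper leaves implicit, not a different route: the structure conditions on $\cA_H$, the absorption of $\beta(u)$ via the accretivity bound $\|\beta(u)\|_{L^{n,1}}\le\|f\|_{L^{n,1}}$ (valid because the weight $s^{1/n-1}$ is decreasing, so majorization is preserved), the median normalization, and the passage to SOLA by lower semicontinuity.
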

This is the specialization of the global anisotropic
Lipschitz estimates in \cite{AntoniniCianchi2025}.

\section{stability in the singular region}\label{section4}

Numerous papers have studied the stability problem in Euclidean spaces for $p \geq 2$, see \cite{CianchiMazya2014}; however, in this section we first prove that the stability in degenerate cases cannot be directly extended to the case $p <2$. Moreover, in the third section, we prove the stability of the partially singular region.

\subsection{The homogeneity obstruction}
\begin{proposition}[No global super-H\"older stability]
\label{prop:obstruction}
Let $1<p<2$, let $T(f)=\nabla u_f$, and let $\|\cdot\|_X$ be any
homogeneous quasi-norm for which $\|T(f)\|_X>0$ for some function $f$.
There is no constant $C$ such that
\begin{equation}\label{eq:impossible}
 \|T(f)-T(g)\|_X
 \le C\|f-g\|_{L^1}^{\frac{1}{p-1}}
\end{equation}
for all $f,g$.
\end{proposition}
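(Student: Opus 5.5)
The plan is to argue by contradiction, using only the homogeneity of the solution map. Suppose \eqref{eq:impossible} held with some constant $C$. The first step is to record that, whenever $u_f$ is the SOLA with datum $f$ and $\lambda>0$, the function $\lambda^{\frac{1}{p-1}}u_f$ is the SOLA with datum $\lambda f$. For bounded data this follows from $\cA_H(t\xi)=t^{p-1}\cA_H(\xi)$ for $t>0$, a consequence of the one-homogeneity of $H$ and the zero-homogeneity of $\nabla H$. In the Neumann case one also uses that the zero-mean condition and the median normalization are preserved under scaling. For general $f\in L^1$ we pass to the limit: if $f_k\to f$ in $L^1$, then $\lambda f_k\to\lambda f$ and the gradients converge a.e., so by \Cref{def:approximable} $T(\lambda f)=\lambda^{\frac{1}{p-1}}T(f)$. (Here we work with $\beta\equiv0$, or with any $\beta$ compatible with this scaling; the obstruction only needs the model operator.)

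The second step is the choice of test pair. Take $g=0$, so $T(g)=0$ by uniqueness, and take $f$ with $\|T(f)\|_X>0$. Applying \eqref{eq:impossible} to the data $\lambda f$ and $0$, and using the homogeneity of the quasi-norm, gives
\[
 \lambda^{\frac{1}{p-1}}\|T(f)\|_X=\|T(\lambda f)-T(0)\|_X\le C\lambda^{\frac{1}{p-1}}\|f\|_{L^1}^{\frac{1}{p-1}}.
\]
This pair only yields the homogeneity-consistent bound $\|T(f)\|_X\le C\|f\|_{L^1}^{1/(p-1)}$, so it gives no contradiction. This is the step I expect to be the main obstacle: the contradiction must come from a ray along which the data difference scales linearly while the gradient difference scales superlinearly.

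The third step uses such a ray. Take $g=\lambda f$ and $f_\lambda=(1+\varepsilon)\lambda f$ with $\varepsilon>0$ fixed. By the first step,
\[
 \lambda^{\frac{1}{p-1}}\big((1+\varepsilon)^{\frac{1}{p-1}}-1\big)\|T(f)\|_X\le C\varepsilon^{\frac{1}{p-1}}\lambda^{\frac{1}{p-1}}\|f\|_{L^1}^{\frac{1}{p-1}}.
\]
Cancelling $\lambda$ leaves $\big((1+\varepsilon)^{\frac{1}{p-1}}-1\big)\|T(f)\|_X\le C\varepsilon^{\frac{1}{p-1}}\|f\|_{L^1}^{1/(p-1)}$. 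As $\varepsilon\downarrow0$, the left side behaves like $\frac{\varepsilon}{p-1}\|T(f)\|_X$, while the right side is $O(\varepsilon^{\frac{1}{p-1}})$. Since $p<2$, we have $\frac{1}{p-1}>1$, so $\varepsilon^{\frac{1}{p-1}}=o(\varepsilon)$. Dividing by $\varepsilon$ and letting $\varepsilon\to0$ therefore forces $\|T(f)\|_X=0$, contradicting the choice of $f$.

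Finally, I would check that quasi-homogeneity, $\|tV\|_X=|t|\|V\|_X$, is the only property of $X$ used, so no triangle inequality is needed. The proof then concludes that no constant $C$ can satisfy \eqref{eq:impossible}.
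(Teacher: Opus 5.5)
Your argument is correct and is essentially the paper's proof. The paper takes $g=(1-\varepsilon)f$, uses $T(\lambda f)=\lambda^{\frac{1}{p-1}}T(f)$, divides by $\varepsilon$ and lets $\varepsilon\downarrow0$, exactly as in your third step; your factor $\lambda$ cancels and can be set to $1$, Step 2 can be dropped, and your justification of the scaling law for SOLAs and your caveat about $\beta$ supply details the paper leaves implicit.
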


\begin{proof}
Since $1><p<2$, $\frac{1}{p-1}>1$.  The homogeneity of the equation gives
$T(\lambda f)=\lambda^\frac{1}{p-1} T(f)$ for $\lambda>0$.  Choose a nonzero $f$ and put $g=(1-\varepsilon)f$.
If \eqref{eq:impossible} holds, then
\[
 \big[1-(1-\varepsilon)^\frac{1}{p-1}\big]\|T(f)\|_X
 \le C\varepsilon^\frac{1}{p-1}\|f\|_1^\frac{1}{p-1}.
\]
Divide by $\varepsilon$ and let $\varepsilon\downarrow0$.  The
left-hand side converges to
$\frac{1}{p-1}\|T(f)\|_X>0$, however the right-hand side converges to zero, a contradiction.
\end{proof}

\begin{lemma}\label{lem:solution-stability}
Let \(2\leq p<n\) and let \(f,g\in L^{1}(\Omega)\). Let $u$ be the SOLA to \eqref{eq:poisson} with Dirichlet boundary and $v$ is the SOLA to \eqref{eq:poisson} with Dirichlet boundary and replace $f$ by $g$. Then there exists $C>0$ such that
\begin{equation}\label{eq:solution-stability-Dirichlet}
 \|u- v\|_{L^{\frac{n(p-1)}{n-p},\infty}(\Om)}
 \le C\|f-g\|_{L^1(\Om)}^{\frac{1}{p-1}}.
\end{equation}

Assume that $\int_{\Omega} f(x)dx = 0$ and $\int_{\Omega} g(x)dx = 0$. Let $u$ be the solution to \eqref{eq:poisson} with Neumann boundary and $v$ is the weak solution to \eqref{eq:poisson} with Neumann boundary and replace $f$ by $g$. Then
\begin{equation}\label{eq:solution-stability-Neumann}
 \|u- v - \med(u-v)\|_{L^{\frac{n(p-1)}{n-p},\infty}(\Om)}
 \le C\|f-g\|_{L^1(\Om)}^{\frac{1}{p-1}}.
\end{equation}
\end{lemma}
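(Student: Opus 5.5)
We prove the estimate first for bounded data $f,g\in L^\infty(\Omega)$, where $u,v$ are genuine weak solutions, and then pass to SOLAs. The passage uses a.e. convergence of $u_k-v_k$, Fatou's lemma on distribution functions, and the lower semicontinuity of the weak-Lorentz quasinorm, exactly as in the proof of part (ii) of \Cref{lem:L1-energy-obstruction-and-passage}. In the Neumann case the median normalization $\med(u_k-v_k)$ is handled the same way: medians of a.e. convergent sequences converge after passing to a subsequence. The argument is a level-set argument for the difference $w=u-v$, parallel to \Cref{lem:weak-estimate-solution}, with the identity $\cA_H(\nabla u)\cdot\nabla u=H(\nabla u)^p$ replaced by the strong monotonicity \eqref{eq:strong-monotonicity}, which is available since $p\ge2$.

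\textbf{Step 1 (truncated energy bound).} Set $w=(u-v)_+$ in the Dirichlet case. In the Neumann case we use $w=(u-v-\med(u-v))_+$ and its negative part, as in \Cref{neumann-case-rearrangement}. For $t>0$ and $h>0$ we subtract the two weak formulations, tested with $\varphi=T_h((u-v-t)_+)$; this is admissible in $W_0^{1,p}$, or in $W^{1,p}$ for Neumann. The term $\int(\beta(u)-\beta(v))\varphi\ge0$ because $\beta$ is monotone and $\varphi$ is supported where $u>v$, so it can be dropped. We obtain
\[
\int_{\{t<w<t+h\}}\big(\cA_H(\nabla u)-\cA_H(\nabla v)\big)\cdot\nabla(u-v)\,\dd x\le h\|f-g\|_{L^1}.
\]
By \eqref{eq:strong-monotonicity} the left side dominates $c\int_{\{t<w<t+h\}}H(\nabla w)^p$. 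Writing $E(t)=\int_{\{w>t\}}H(\nabla w)^p$, dividing by $h$ and letting $h\downarrow0$ gives $-E'(t)\le CM$ with $M=\|f-g\|_{L^1}$. In the Neumann case the zero-mean conditions let constant shifts of the test function leave the right side unchanged.

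\textbf{Step 2 (distribution inequality and integration).} We apply \Cref{lem:distribution} to $w$. It is stated for $W_0^{1,p}$, but its proof only uses the anisotropic isoperimetric inequality on the superlevel sets. In the Neumann case we replace this by the relative anisotropic isoperimetric inequality, which is valid since $|\{w>t\}|\le m/2$ after median normalization. This yields
\[
-\mu'(t)\ge cM^{-\frac1{p-1}}\mu(t)^{1+\frac1{r_0}},\qquad r_0=\frac{n(p-1)}{n-p}.
\]
Integrating this as in \Cref{lem:weak-estimate-solution}, or invoking \Cref{lem:distribution-inversion} with $a=b$, gives $\mu(t)\le CM^{\frac{r_0}{p-1}}t^{-r_0}$, that is, $\sup_t t\mu(t)^{1/r_0}\le CM^{\frac1{p-1}}$. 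We treat the negative part by exchanging the roles of $u$ and $v$ (equivalently $f$ and $g$), and combine the two parts via $(a+b)^*(s)\le a^*(s/2)+b^*(s/2)$. This gives \eqref{eq:solution-stability-Dirichlet} and \eqref{eq:solution-stability-Neumann}.

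\textbf{Main obstacle.} The delicate point is Step 1 in the Neumann case together with the passage to SOLAs. The test function $T_h((u-v-t)_+)$ must be used with the shifted difference, and the relative isoperimetric inequality needs $|\{w>t\}|\le m/2$ for both signs. This is exactly why the median normalization is imposed on $u-v$ rather than on $u$ and $v$ separately. The structural input, \eqref{eq:strong-monotonicity}, is where $p\ge2$ is essential; for $p<2$, \Cref{prop:obstruction} shows the estimate must fail.
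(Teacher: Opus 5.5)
Your proposal is correct and follows essentially the same route as the paper: test the difference of the weak formulations with $T_h((w-t)_+)$, use \eqref{eq:strong-monotonicity} to get $-E'(t)\le\|f-g\|_{L^1}$, feed this into \Cref{lem:distribution} (with relative isoperimetry after median normalization in the Neumann case), integrate the resulting differential inequality for $\mu$, and combine $w_+$ and $w_-$. You also supply details the paper leaves implicit: the correct power $\|f-g\|_{L^1}^{1/(p-1)}$ after raising to $p'$ (the paper's displayed intermediate line drops it), the explicit integration, and the passage from bounded data to SOLAs. The one imprecise point is your claim about medians; it should say that a subsequence of $\med(u_k-v_k)$ converges to \emph{a} median of $u-v$, which coincides with $\med(u-v)$ when the median is unique.
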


\begin{proof}
We still need to consider the regularity of the approximate solution; however, recalling Remark \ref{rem:L1-approximable}, for  simplicity, we will directly employ the $L^1$ function here.

Fix $t,h>0$ and taking
\[
 \phi_{t,h}(w)=
 \begin{cases}
 0,&w\le t,\\
 w-t,&t<w<t+h,\\
 h,&w\ge t+h,
 \end{cases}
\]
as a test function in the difference of the two weak formulations.  By Lemma \ref{lem:monotonicity},
\begin{align*}
 &\frac{c}{h}\int_{\{t<w<t+h\}}H(\nabla w)^p\,\dd x \\&\leq \int_{\{t<w<t+h\}}\left(H^{p-1}(\nabla u)\nabla H(\nabla u) - H^{p-1}(\nabla v)\nabla H(\nabla v )\right)\cdot(\nabla u-\nabla v)\,\dd x\\
 &\le \int_{\{w>t+h\}}|f-g|\,dx
 +\frac1h\int_{\{t<w<t+h\}}|f-g|(w-t)\,\dd x.
\end{align*}

Letting $h\downarrow 0$ gives, for almost every $t>0$,
\begin{equation}\label{eq:energy-derivative}
 -\frac{\dd}{\dd t}\int_{\{w_+>t\}}H(\nabla w_+)^p\,\dd x\le \int_{\{w>t\}}f-g \dd x \leq \|f-g\|_{L^1(\Omega)}.
\end{equation}
Inserting \eqref{eq:energy-derivative} into Lemma \ref{lem:distribution} and take \(p\)-th root yields
\begin{align*}
 1 &\leq
 C (-\mu'_{w_+}(t))(\mu_{w_+}(t))^{-\frac{p'}{n'}}
 \left(-\frac{d}{dt}\int_{\{w_{+}>t\}}H(\nabla w)^p\;\dd x\right)^{\frac{1}{p-1}}\\
 &\leq  C(-\mu'_{w_+}(t))(\mu_{w_+}(t))^{-\frac{p'}{n'}} \|f-g\|_{L^1(\Omega)}.
\end{align*}
Integration gives
\begin{equation}\label{eq:w+-plus-rearrangement}
t\leq C (\mu_{w_+}(t))^{\frac{p-n}{n(p-1)}} \|f-g\|^{\frac{1}{p-1}}_{L^1(\Omega)}.
\end{equation}
Hence 
\begin{equation}\label{eq:w-plus-rearrangement}
 w_+^*(s)\leq C s^{\frac{p-n}{n(p-1)}} \|f-g\|^{\frac{1}{p-1}}_{L^1(\Omega)}.
\end{equation}

The same argument applies to $w_-$.  Since $w=w_+-w_-$ and weak Lebesgue quasi-norms satisfy a quasi-triangle inequality, \eqref{eq:solution-stability-Dirichlet} follows.
\end{proof}

\subsection{Degenerate stability}

\begin{proposition}
\label{prop:degenerate-stability}
Let $2\le p<n$. If $u, v$ be SOLAS corresponding to $f,g\in L^1(\Om)$, with the same Dirichlet boundary condition.  In the Neumann case assume the compatibility conditions.  Then
\begin{equation}\label{eq:degenerate-stability}
 \|H(\nabla u-\nabla v)\|_{L^{q,\infty}}
 \le C\|f-g\|_{L^1}^{\frac{1}{p-1}},\qquad q=\frac{n(p-1)}{n-1}.
\end{equation}
\end{proposition}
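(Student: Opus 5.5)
We prove \Cref{prop:degenerate-stability} by adapting the proof of \Cref{thm:individual} to the difference $w=u-v$. In the Neumann case we use $w=u-v-\med(u-v)$, as in \Cref{neumann-case-rearrangement}. The only change is that the identity $\cA_H(\nabla u)\cdot\nabla u=H(\nabla u)^p$ is replaced by the strong monotonicity \eqref{eq:strong-monotonicity}, valid for $p\ge2$. By \Cref{lem:L1-energy-obstruction-and-passage}(ii) with $\alpha=\frac1{p-1}$, $\beta=0$, it suffices to treat bounded data $f,g$. Then $u,v$ are weak solutions and every test function below is admissible. Put $M=\|f-g\|_{L^1}$, $D=H(\nabla u-\nabla v)$, and $D_\pm=D\chi_{\{w\gtrless0\}}$. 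Since $\nabla w=0$ a.e. on $\{w=0\}$, we have $D=D_++D_-$ a.e.

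\emph{Step 1 (weighted test).} Take $z=w_+$. Fix $0<s<m$ and $\zeta\ge0$. As in the proof of \Cref{thm:individual}, set $I(r)=\int_0^{\min\{r,s\}}\zeta$ and $\Phi(x)=\int_0^{z(x)}I(\mu_z(t))\,\dd t$. Subtracting the two weak formulations and testing with $\Phi$ gives
\[
\int_\Omega I(\mu_z(z))\,(\cA_H(\nabla u)-\cA_H(\nabla v))\cdot\nabla w\,\chi_{\{w>0\}}\dd x
+\int_\Omega(\beta(u)-\beta(v))\Phi\,\dd x=\int_\Omega (f-g)\Phi\,\dd x .
\]
The $\beta$ term is nonnegative: where $\Phi>0$ we have $u>v$, and $\beta$ is increasing. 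By \eqref{eq:strong-monotonicity}, the left-hand side is therefore at least $c\int I(\mu_z(z))D_+^p$. The right-hand side is at most $M\|\Phi\|_\infty$, and $\|\Phi\|_\infty\le\int_0^\infty I(\mu_z(t))\,\dd t=\int_0^s\zeta(\rho)z^*(\rho)\,\dd\rho$ by Fubini.

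\emph{Step 2 (bound on $w_\pm$).} Now use \Cref{lem:solution-stability}: by \eqref{eq:w-plus-rearrangement}, $z^*(\rho)\le CM^{\frac1{p-1}}\rho^{\frac{p-n}{n(p-1)}}$. In the Neumann case the same bound holds because $|\{w>0\}|\le m/2$, so the relative isoperimetric inequality applies. Hence $\|\Phi\|_\infty\le CM^{\frac1{p-1}}\int_0^s\zeta\rho^{\frac{p-n}{n(p-1)}}\,\dd\rho$, and the right-hand side of Step 1 is at most $CM^{p'}\int_0^s\zeta\rho^{\frac{p-n}{n(p-1)}}\,\dd\rho$.

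\emph{Step 3 (rank lower bound and optimization).} $D_+^p$ vanishes a.e. on $\{\nabla z=0\}$, so \Cref{lem:rank-weight} applies and gives
\[
\int I(\mu_z(z))D_+^p\ge (D_+^*(s))^p\int_0^s\zeta(\rho)(s-\rho)\,\dd\rho .
\]
Choose $\zeta=\chi_{(s/2,s)}$. Then $\int_0^s\zeta(s-\rho)\,\dd\rho\sim s^2$ and $\int_0^s\zeta\rho^{\frac{p-n}{n(p-1)}}\,\dd\rho\sim s^{1+\frac{p-n}{n(p-1)}}$. This yields $(D_+^*(s))^p\le CM^{p'}s^{-\frac{p(n-1)}{n(p-1)}}$, that is, $D_+^*(s)\le CM^{\frac1{p-1}}s^{-1/q}$. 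The same holds for $D_-$ by exchanging the roles of $(u,f)$ and $(v,g)$. Finally, $D^*(s)\le D_+^*(s/2)+D_-^*(s/2)$ gives \eqref{eq:degenerate-stability}.

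\textbf{Main obstacle.} The delicate step is Step 1. One must check that the cross term from $\beta$ has the right sign, and that the monotonicity inequality can be applied pointwise inside the weighted integral; it can, since the weight $I(\mu_z(z))\ge0$. One must also check that $\Phi$ is an admissible $W^{1,p}$ test function, which holds because $I$ is bounded and Lipschitz composition applies. In the Neumann case $\Phi$ is replaced by $\Phi-\fint\Phi$, which is allowed since $\int(f-g)=0$; this also gives $\beta\equiv0$ there.
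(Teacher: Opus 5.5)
Your proposal is correct and follows the paper's proof essentially step for step. It uses the same weighted test function $\Phi=\int_0^{w_+}I(\mu_{w_+}(t))\,\dd t$, the strong monotonicity \eqref{eq:strong-monotonicity}, the bound on $(w_+)^*$ from \Cref{lem:solution-stability}, the rank lower bound of \Cref{lem:rank-weight}, and the splitting $D^*(s)\le D_+^*(s/2)+D_-^*(s/2)$; your explicit choice $\zeta=\chi_{(s/2,s)}$ just makes concrete the supremum bound the paper asserts, and reducing to bounded data via \Cref{lem:L1-energy-obstruction-and-passage}(ii) replaces the paper's appeal to Remark~\ref{rem:L1-approximable}.
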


\begin{proof}
  Fix $0<s\le\frac{|\Om|}{2}$, and $\zeta\ge0$ in $L^1(0,s)$. For the Dirichlet problem, set $w=u-v$ and put
\[
 I(r)=\int_0^{\min\{r,s\}}\zeta(\rho)\,\dd\rho,
\]
and
\begin{equation}\label{eq:special-test}
 \Phi(x)=\int_0^{w_+(x)} I(\mu_{w_+}(t))\,\dd t \geq 0.
\end{equation}
The Sobolev chain rule gives
\[
 \nabla\Phi=\chi_{\{w>0\}}I(\mu_{w_+}(w_+))
 (\nabla u-\nabla v).
\]
Using integral by parts we have
\begin{align}\label{divergence-theorem}
&\int_{\Omega}\left(H^{p-1}(\nabla u)\nabla H(\nabla u) - H^{p-1}(\nabla v)\nabla H(\nabla v )\right)\cdot\nabla \Phi\;\dd x \\
&+ \int_{\Omega}(\beta(u)-\beta(v))\Phi\;\dd x=\int_{\Omega}(f-g)\Phi \;\dd x
\end{align}
Inserting definition of $\nabla \Phi$ into \eqref{divergence-theorem} gives the following
\begin{align*}
&\int_{\{w>0\}}I(\mu_{w_+}(w_+)(x))\left(H^{p-1}(\nabla u)\nabla H(\nabla u) - H^{p-1}(\nabla v)\nabla H(\nabla v )\right)\cdot(\nabla u-\nabla v)\;\dd x \\&+ \int_{\{u>v\}}(\beta(u)-\beta(v))\Phi\;\dd x= \int_{\Omega}(f-g)\Phi \;\dd x.
\end{align*}
Moreover, Fubini theorem and Lemma \ref{lem:solution-stability} imply
\begin{align*}
\|\Phi\|_{L^\infty(\Omega)}
&\leq \int_0^\infty
       \int_0^{\min\{\mu_{w_+}(t),s\}}
       \zeta(\rho)\,d\rho\,dt \\
&= \int_0^{(w_+)^*(s)}
       \int_0^s\zeta(\rho)\,d\rho\,dt
   +\int_{(w_+)^*(s)}^\infty
       \int_0^{\mu_{w_+}(t)}
       \zeta(\rho)\,d\rho\,dt \\
&= (w_+)^*(s)\int_0^s\zeta(\rho)\,d\rho
   +\int_0^s
       \int_{(w_+)^*(s)}^{(w_+)^*(\rho)}
       \zeta(\rho)\,dt\,d\rho \\
&= (w_+)^*(s)\int_0^s\zeta(\rho)\,d\rho
   +\int_0^s
       \bigl((w_+)^*(\rho)-(w_+)^*(s)\bigr)
       \zeta(\rho)\,d\rho \\
&= \int_0^s(w_+)^*(\rho)\zeta(\rho)\,d\rho \\
&\leq C\|f-g\|_{L^1(\Omega)}^{\frac{1}{p-1}}
       \int_0^s
       \zeta(\rho)\rho^{\frac{p-n}{n(p-1)}}\,d\rho.
\end{align*}
Since $\beta$ is monotone (increasing) and using Lemma \ref{lem:monotonicity}, Lemma \ref{lem:rank-weight} inequality gives
\begin{align*}
\begin{split}
&\int_{\{w>0\}}I(\mu_{w_+}(w_+)(x))\left(H^{p-1}(\nabla u)\nabla H(\nabla u) - H^{p-1}(\nabla v)\nabla H(\nabla v )\right)\cdot(\nabla u-\nabla v)\;\dd x \\
&+ \int_{\{u>v\}}(\beta(u)-\beta(v))\Phi\;\dd x\\
&\geq \int_0^{\frac{|\Omega|}{2}}\left(I\circ\mu_{w_+}\circ(w_+)^{*}\right)_{*}[H(\nabla w_+)]^*(r)^p\;\dd x\\
&\geq \int_{0}^s I(r)[H(\nabla w_+)]^*(r)^p\;\dd x\\
&\geq [H(\nabla w_+)]^*(s)^p\int_{0}^s\int_{0}^{r}\zeta(\rho)\,\dd\rho\dd r\\
&\geq C[H(\nabla w_+)]^*(s)^p\int_{0}^s\zeta(\rho)(s-\rho)\,\dd\rho.
\end{split}
\end{align*}
 By above inequalities, we obtain
\[
 [H(\nabla w_+)]^*(s)^p \sup\frac{\int_{0}^s\zeta(\rho)(s-\rho)\,\dd\rho}{\int_{0}^s\zeta(\rho)\rho^{\frac{p-n}{n(p-1)}}\,\dd\rho}
 \le C \|f-g\|_{L^1(\Om)}^{\frac{p}{p-1}},
\]
and 
\begin{align}\label{eq: inequality s}
\sup\frac{\int_{0}^s\zeta(\rho)(s-\rho)\,\dd\rho}{\int_{0}^s\zeta(\rho)\rho^{\frac{p-n}{n(p-1)}}\,\dd\rho} \ge Cs^{\frac{p(n-1)}{n(p-1)}}.
\end{align}
This becomes
\begin{equation}\label{eq:plus-gradient-weak}
 [H(\nabla w_+)]^*(s)\le C s^{-\frac{1}{q}}\|f-g\|_{L^1(\Om)}^{\frac{1}{p-1}}.
\end{equation}
The same estimate holds for $w_-$. By Stampacchia's level-set property, $\nabla w=0$ almost everywhere on $\{w=0\}$, hence
\begin{align*}
[H(\nabla u-\nabla v)]^*(s)
&=[H(\nabla w)]^*(s)\\
&\leq [H(\nabla w_+)]^*\left(\frac{s}{2}\right)
     +[H(\nabla w_-)]^*\left(\frac{s}{2}\right)\\
&\leq 2C\left(\frac{s}{2}\right)^{-\frac{1}{q}}
       \|f-g\|_{L^1(\Omega)}^{\frac{1}{p-1}}\\
&\leq Cs^{-\frac{1}{q}}
       \|f-g\|_{L^1(\Omega)}^{\frac{1}{p-1}}.
\end{align*}
For $\frac{|\Omega|}{2}<s<|\Omega|$, the monotonicity of the decreasing rearrangement gives
\[
[H(\nabla w)]^*(s)
\leq [H(\nabla w)]^*\left(\frac{|\Omega|}{2}\right)
\leq C\|f-g\|_{L^1(\Omega)}^{\frac{1}{p-1}}
       \left(\frac{|\Omega|}{2}\right)^{-\frac{1}{q}}.
\]
Since $s<|\Omega|$, we have
$\left(\frac{|\Omega|}{2}\right)^{-\frac{1}{q}}\leq 2^{\frac{1}{q}}s^{-\frac{1}{q}} \leq Cs^{-\frac{1}{q}}$. Hence the same estimate holds for $\frac{|\Omega|}{2}<s<|\Omega|$. And we finish the proof. We mention that for the Neumann case, set
$w=u-v-\operatorname{med}(u-v)$ is similar to prove the results.
\end{proof}

\subsection{Singular stability}
\begin{proof}[Proof of \cref{thm:singular-stability}]
\emph{Step 1.}
We first take $f,g\in L^1(\Omega)$.  Write
\[
 w=u-v,\quad
 M=\|f-g\|_{L^1},\quad N=\|f\|_{L^1}+\|g\|_{L^1},
\]
and, in the Neumann case, replace $w$ by $w-\med(w)$. By \cref{thm:individual} and the weak triangle inequality,
\begin{equation}\label{eq:K-background}
 \|H(\nabla u)+H(\nabla v)\|_{L^{q,\infty}}
 \le C (\|f\|_{L^1}+\|g\|_{L^1})^{\frac{1}{p-1}}=CN^{\frac{1}{p-1}}.
\end{equation}
For \(k>0\), define
\[
T_k(\tau):=\max\{-k,\min\{\tau,k\}\},
\qquad \tau\in\mathbb{R}.
\]
Since \(w_+\geq 0\), one has
\[
T_k(w_+)=\min\{w_+,k\}
=
\begin{cases}
0, & w\leq 0,\\
w, & 0<w<k,\\
k, & w\geq k.
\end{cases}
\]
Moreover, by the Sobolev chain rule,
\[
\nabla T_k(w_+)
=
\chi_{\{0<w<k\}}\nabla w
\qquad\text{a.e. in }\Omega,
\]
and hence, by the positive homogeneity of \(H\),
\[
H\bigl(\nabla T_k(w_+)\bigr)
=
\chi_{\{0<w<k\}}H(\nabla w)
\qquad\text{a.e. in }\Omega.
\]
Testing the difference equation with $T_k(w_+)$ and using
\eqref{eq:weighted-monotonicity} gives
\begin{align}\label{eq:W-strip}
\begin{split}
&\int_{0<w<k}[H(\xi)+H(\eta)]^{p-2}H(\xi-\eta)^2 \;\dd x\\
&\leq \int_{0<w<k}\left(H^{p-1}(\nabla u)\nabla H(\nabla u) - H^{p-1}(\nabla v)\nabla H(\nabla v )\right)\cdot\nabla T_k(w_+)\;\dd x \\
&+ \int_{0<w<k}(\beta(u)-\beta(v))w\;\dd x+ \int_{w>k}(\beta(u)-\beta(v))w\;\dd x\\
&=\int_{\Omega}(f-g)T_k(w_+)\;\dd x\\
&\leq CkM.
\end{split}
\end{align}

\emph{Step 2.}
Define
\begin{equation}\label{eq:r-definition}
 r=\frac{2q}{q+2-p}
   =\frac{2n(p-1)}{n+p-2}.
\end{equation}
The hypothesis $p>p_*(n)$ is equivalent to $r>1$; moreover $r<n$ and
\begin{equation}\label{eq:rstar}
 r^*=\frac{nr}{n-r}
 =\frac{2n(p-1)}{n-p}
\end{equation}
On the set $\{0<w<k\}$, we write
\[
H(\nabla u-\nabla v)
=
\frac{H(\nabla u-\nabla v)}
     {[H(\nabla u)+H(\nabla v)]^{\frac{2-p}{2}}}
[H(\nabla u)+H(\nabla v)]^{\frac{2-p}{2}}.
\]
The Lorentz--H\"older inequality \eqref{eq:Lorentz--Holder} and \eqref{eq:K-background} give
\begin{align}\label{eq:gradient-estimate}
\begin{split}
&\bigl\|H(\nabla u-\nabla v)
       \chi_{\{0<w<k\}}\bigr\|_{L^{r,2}(\Omega)}
\\
&\le C
\left(
 \int_{\{0<w<k\}}
 \frac{H(\nabla u-\nabla v)^2}
      {[H(\nabla u)+H(\nabla v)]^{2-p}}\,dx
\right)^{\frac12}
\left\|
 [H(\nabla u)+H(\nabla v)]^{\frac{2-p}{2}}
\right\|_{L^{\frac{2q}{2-p},\infty}(\Omega)}
\\
&\le C
\bigl(k\|f-g\|_{L^1(\Omega)}\bigr)^{\frac{1}{2}}
\left\|H(\nabla u)+H(\nabla v)
\right\|_{L^{q,\infty}(\Omega)}^{\frac{2-p}{2}}
\\
&\le C
\bigl(\|f-g\|_{L^1(\Omega)}\bigr)^{\frac12}
\bigl(\|f\|_{L^1(\Omega)}+\|g\|_{L^1(\Omega)}
\bigr)^{\frac{2-p}{2(p-1)}}\\
&=CM^{\frac12}N^{\frac{2-p}{2(p-1)}}.
\end{split}
\end{align}
If $\mu(t)=|\{w_+>t\}|$, then
$T_k(w_+)^*(s)\ge k$ for $0<s<\mu(k)$. By the definition of the $L^{r^*,2}$ norm,
\begin{align*}
\|T_k(w_+)\|_{L^{r^*,2}(\Omega)}^2
&=\int_0^{|\Omega|}
  s^{\frac{2}{r^*}}[T_k(w_+)]^*(s)^2\,\frac{ds}{s}\\
&\geq k^2\int_0^{\mu(k)}s^{\frac{2}{r^*}-1}\,ds
 =\frac{r^*}{2}k^2\mu(k)^{\frac{2}{r^*}}.
\end{align*}
On the other hand, the Lorentz--Sobolev inequality \eqref{eq:lorentz-sobolev} and the
preceding gradient estimate \eqref{eq:gradient-estimate} imply
\begin{align*}
\|T_k(w_+)\|_{L^{r^*,2}(\Omega)}
&\leq C\|H(\nabla T_k(w_+))\|_{L^{r,2}(\Omega)}\\
&\leq C
\bigl(k\|f-g\|_{L^1(\Omega)}\bigr)^{\frac12}
\bigl(\|f\|_{L^1(\Omega)}+\|g\|_{L^1(\Omega)}
\bigr)^{\frac{2-p}{2(p-1)}}.
\end{align*}
Combining the last two inequalities and dividing by $k>0$, we obtain
\begin{align*}
k\mu(k)^{\frac{2}{r^*}}&\leq C
\|f-g\|_{L^1(\Omega)}
\bigl(\|f\|_{L^1(\Omega)}+\|g\|_{L^1(\Omega)}
\bigr)^{\frac{2-p}{p-1}}\\
&=CMN^{\frac{2-p}{p-1}}.
\end{align*}
By the definition of the decreasing rearrangement, it follows that
\begin{align}\label{eq:w-estimate}
(w_+)^*(s)\leq CMN^{\frac{2-p}{p-1}}s^{\frac{p-n}{n(p-1)}},
\qquad 0<s<|\Omega|.
\end{align}

\emph{Step 3.}
Fix $0<s<|\Omega|$, take $\zeta\ge0$ in $L^1(0,s)$, and define
\[
 I(r)=\int_0^{\min\{r,s\}}\zeta(\rho)\,\dd\rho,\qquad
 \Phi(x)=\int_0^{w_+(x)}I(\mu(t))\,\dd t.
\]
Applying Fubini theorem and \eqref{eq:w-estimate}, we obtain
\begin{align*}
\|\Phi\|_{L^\infty(\Omega)}
&\leq \int_0^\infty I(\mu(t))\,dt\\
&=\int_0^s \zeta(\rho)
  \left|\{t>0:\mu(t)>\rho\}\right|\,d\rho\\
&=\int_0^s \zeta(\rho)(w_+)^*(\rho)\,d\rho\\
&\leq CMN^{\frac{2-p}{p-1}}\int_0^s
  \zeta(\rho)\rho^{\frac{p-n}{n(p-1)}}\,d\rho.
\end{align*}
By the chain rule,
\[
\nabla\Phi
=I(\mu(w_+))
  (\nabla u-\nabla v)\chi_{\{w>0\}}
\quad\text{a.e. in }\Omega.
\]
Therefore, \ref{lem:monotonicity} and Lemma \ref{lem:rank-weight} yields
\begin{align*}
\int_\Omega
 &\bigl(\cA_H(\nabla u)-\cA_H(\nabla v)\bigr)
 \cdot\nabla\Phi\,dx + \int_{\{u>v\}}(\beta(u)-\beta(v))\Phi\;\dd x\\
&\geq c\int_\Omega I(\mu(w_+))
              [H(\xi)+H(\eta)]^{p-2}H(\xi-\eta)^2\chi_{\{w>0\}}\,dx\\
&\geq c[H(\xi)+H(\eta)]^{p-2}H(\xi-\eta)^2\chi_{\{w>0\}}]^*(s)
       \int_0^s I(r)\,dr.
\end{align*}
By Fubini theorem and the definition of $I$,
\[
\int_0^s I(r)\,dr
=\int_0^s\int_0^r\zeta(\rho)\,d\rho\,dr
=\int_0^s\zeta(\rho)(s-\rho)\,d\rho.
\]
On the other hand, testing the difference equation with $\Phi$
gives
\begin{align*}
&\int_\Omega
 \bigl(\cA_H(\nabla u)-\cA_H(\nabla v)\bigr)
 \cdot\nabla\Phi\,dx + \int_{\{u>v\}}(\beta(u)-\beta(v))\Phi\;\dd x\\
&=\int_\Omega(f-g)\Phi\,dx\leq \|f-g\|_{L^1}\|\Phi\|_{L^\infty(\Omega)}.
\end{align*}
Combining the preceding estimates, we obtain
\begin{align*}
&[H(\xi)+H(\eta)]^{p-2}H(\xi-\eta)^2\chi_{\{w>0\}}]^*(s)\int_0^s\zeta(\rho)(s-\rho)\,d\rho\\
&\leq C\|f-g\|_{L^1}\|\Phi\|_{L^\infty(\Omega)}\\
&\leq CM^2N^{\frac{2-p}{p-1}}\int_0^s
  \zeta(\rho)\rho^{\frac{p-n}{n(p-1)}}\,d\rho.
\end{align*}
As in the proof of Theorem \ref{thm:individual}, we get
\begin{equation}\label{eq:W-weak}
 \|[H(\xi)+H(\eta)]^{p-2}H(\xi-\eta)^2\chi_{\{w>0\}}\|_{L^{q/p,\infty}}
 \le CM^2N^{\frac{2-p}{p-1}}.
\end{equation}

\emph{Step 4.}
By \eqref{eq:K-background}, and \eqref{eq:W-weak},
\begin{align*}
&\bigl\|H(\nabla u-\nabla v)
       \chi_{\{w>0\}}\bigr\|_{L^{q,\infty}(\Omega)}
\\
&\leq C
\left\|
 \frac{H(\nabla u-\nabla v)^2}
      {[H(\nabla u)+H(\nabla v)]^{2-p}}
 \chi_{\{w>0\}}
\right\|_{L^{q/p,\infty}(\Omega)}^{\frac12}
\bigl\|H(\nabla u)+H(\nabla v)\bigr\|_{L^{q,\infty}(\Omega)}^{\frac{2-p}{2}}
\\
&\leq \left(CM^2N^{\frac{2-p}{p-1}}\right)^{\frac12}\left(CN^{\frac{1}{p-1}}\right)^{\frac{2-p}{2}}\\
&\leq CMN^{\frac{2-p}{p-1}}.
\end{align*}
The same argument applied to $w_-$ proves
\[
 \bigl\|H(\nabla u-\nabla v)
      \bigr\|_{L^{q,\infty}(\Omega)}\le  CMN^{\frac{2-p}{p-1}}.
\]
Hence we finish the proof.
\end{proof}

\begin{remark}\label{rem:threshold}
At $p=p_*(n)$, the exponent in \eqref{eq:r-definition} is $r=1$.
The estimate \eqref{eq:gradient-estimate} then gives an $L^{1,2}$ bound, which
does not feed into the strong Lorentz--Sobolev estimate used above.
Thus the present proof genuinely requires $p>p_*(n)$.  No
counterexample is claimed at or below the threshold. Moreover, for the Neumann case, set
$w=u-v-\operatorname{med}(u-v)$ is similar to prove the results.
\end{remark}

\section{Proof of the rearrangement estimate}\label{section5}

\begin{proposition}\label{prop:singular-background}
Let \(1<p<2\), $n \geq 3$. Let \(u,v\) be weak solutions corresponding to \(f,g \in L^{\infty}\), with the same boundary condition, and assume
\[
  L=\|H(\nabla v)\|_{L^\infty(\Omega)}<\infty.
\]
In the Neumann case assume zero mean for both data. Put \(M=\|f-g\|_{L^1(\Omega)}\). Then, for every \(0<s<m\),
\begin{equation} \label{eq:singular-background}
  [H(\nabla u-\nabla v)]^*(s)^{p-1}
  \le C\Bigl[s^{-\frac{1}{n'}}M+s^{-\frac{p-1}{n'}}M^{p-1}L^{(p-1)(2-p)}\Bigr].
\end{equation}
\end{proposition}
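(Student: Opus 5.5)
Set $w=u-v$, and in the Neumann case replace $w$ by $w-\med(w)$. We treat $w_+$ and $w_-$ separately and combine them at the end through $(a+b)^*(s)\le a^*(s/2)+b^*(s/2)$, exactly as in the proof of \Cref{thm:individual}. The point is to replace the weight $[H(\nabla u)+H(\nabla v)]^{p-2}$ by the controlled function $\Psi_L$ from \eqref{eq:Psi}. Since $H(\nabla v)\le L$ a.e., \Cref{lem:bounded-coercivity} gives
\[
(\cA_H(\nabla u)-\cA_H(\nabla v))\cdot\nabla w\ge c\,\Psi_L(H(\nabla w))\qquad\text{a.e.}
\]
The monotone term $(\beta(u)-\beta(v))\varphi$ is nonnegative for every nonnegative test function $\varphi$ vanishing on $\{w\le0\}$, so it can be dropped throughout.

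\emph{Step 1 (level sets of $w_+$).} Test the difference equation with $T_h((w-t)_+)$ and let $h\downarrow0$. This gives, for a.e.\ $t>0$,
\[
-\frac{d}{dt}\int_{\{w_+>t\}}\Psi_L(H(\nabla w))\,dx\le M.
\]
Next repeat the H\"older/coarea argument of \Cref{lem:distribution}, now with the Young function $\Psi_L$ in place of $t^p$. On $\{H(\nabla w)\ge L\}$ one has $\Psi_L\sim t^p$. On $\{H(\nabla w)<L\}$ one has $\Psi_L\sim L^{p-2}t^2$, and there we apply H\"older with exponents $2,2$. Splitting $P_H(\{w_+>t\})$ into the contributions of these two regions, and using the anisotropic isoperimetric inequality (relative version in the Neumann case, with $|\{w_+>t\}|\le m/2$), we get the two-phase inequality
\[
-\mu'(t)\ge c\min\Bigl\{M^{-\frac{1}{p-1}}\mu^{\frac{p'}{n'}},\ M^{-1}L^{p-2}\mu^{\frac{2}{n'}}\Bigr\}.
\]
\Cref{lem:distribution-inversion} then yields a two-term bound for $w_+^*(s)$:
\[
w_+^*(s)\le C\bigl(M^{\frac1{p-1}}s^{-\frac1{r_0}}+ML^{2-p}s^{-\frac{2}{n'}+1}\bigr).
\]

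\emph{Step 2 (gradient).} As in Step 3 of the proof of \Cref{thm:singular-stability}, test with $\Phi(x)=\int_0^{w_+(x)}I(\mu(t))\,dt$. Fubini and Step 1 give
\[
\|\Phi\|_\infty\le\int_0^s\zeta\,w_+^*.
\]
\Cref{lem:rank-weight} then gives
\[
[\Psi_L(H(\nabla w_+))]^*(s)\int_0^s\zeta(\rho)(s-\rho)\,d\rho\le M\int_0^s\zeta(\rho)\,w_+^*(\rho)\,d\rho.
\]
Choosing $\zeta$ concentrated near $\rho\approx s/2$ (the same supremum computation as in \Cref{thm:individual}) gives
\[
[\Psi_L(H(\nabla w_+))]^*(s)\le C\,s^{-1}M\,w_+^*(s/2)\le C\bigl(X^{p'}+L^{2-p}X^2\bigr),\qquad X=s^{-1/n'}M.
\]
Indeed, $s^{-1}M\cdot M^{1/(p-1)}s^{-1/r_0}=X^{p'}$ because $1+\frac1{r_0}=\frac{p'}{n'}$, and $s^{-1}M\cdot ML^{2-p}s^{1-2/n'}=L^{2-p}X^2$. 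Because $\Psi_L$ is increasing, $[\Psi_L(H)]^*=\Psi_L(H^*)$. \Cref{lem:modular-inversion} then converts this into
\[
[H(\nabla w_+)]^*(s)^{p-1}\le C\bigl[X+X^{p-1}L^{(2-p)(p-1)}\bigr],
\]
which is exactly \eqref{eq:singular-background}. Applying the same argument to $-u,-v$ handles $w_-$, and the rearrangement subadditivity combines the two halves.

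\emph{Main obstacle.} The delicate step is the two-phase distribution inequality in Step 1. The H\"older argument must be carried out separately on the parts of the level surface where $H(\nabla w)\gtrless L$, and the resulting lower bound on $-\mu'$ must still take the minimum form required by \Cref{lem:distribution-inversion}. If this splitting proves awkward, the fallback is to avoid it altogether. One would instead bound $w_+$ directly through the truncation energy
\[
\int_{\{0<w<k\}}\Psi_L(H(\nabla w))\le kM,
\]
combined with \eqref{eq:Psi-inversion-rough}, which splits $H(\nabla w)$ into a $\Psi_L^{1/p}$ part and an $L^{(2-p)/2}\Psi_L^{1/2}$ part, followed by Sobolev embeddings for each part. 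Either route needs only a bound of the form $w_+^*(s)\lesssim M^{1/(p-1)}s^{-1/r_0}+ML^{2-p}s^{1-2/n'}$ for Step 2 to go through. The hypothesis $n\ge3$ is presumably what keeps the second exponent $1-2/n'$ negative, or at least harmless, in these integrations.
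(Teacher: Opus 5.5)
Your proposal is correct and follows the paper's proof essentially step by step. The steps match: the energy bound $-\frac{d}{dt}\int_{\{w_+>t\}}\Psi_L(H(\nabla w_+))\,dx\le CM$, the two-phase inequality for $-\mu'$ fed into \Cref{lem:distribution-inversion} (where $n\ge3$ is exactly what makes the second exponent $\frac{2}{n'}-1=\frac{n-2}{n}$ positive), the test function $\Phi$ combined with \Cref{lem:rank-weight} and \Cref{lem:modular-inversion}, and the final recombination of $w_\pm$. Your two local variations are equivalent to the paper's: splitting the level surface where $H(\nabla w)\gtrless L$ is the region-wise form of the pointwise inequality \eqref{eq:Psi-inversion-rough}, and concentrating $\zeta$ near $s/2$ to use $w_+^*(s/2)$ directly replaces the paper's supremum bound from the proof of \Cref{prop:degenerate-stability}.
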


\begin{proof}
The case \(M=0\) follows from uniqueness. Define \(w=u-v\), and set
\[
  \mu(t)=|\{w_+>t\}|,
  \qquad
  E(t)=\int_{\{w_+ >t\}}\Psi_L(H(\nabla w_+))\,\dd x.
\]
For \(t>0\) and \(h>0\), define the truncation $T_h:\mathbb{R}\to\mathbb{R}$ by
\[
T_h(\tau):=\max\{-h,\min\{\tau,h\}\}
\]
and
\[
\varphi_{t,h}
   =\frac{1}{h}T_h\bigl((w-t)_+\bigr).
\]
Then \(0\leq \varphi_{t,h}\leq 1\), and
\[
\nabla\varphi_{t,h}
   =\frac{1}{h}\nabla w\chi_{\{t<w<t+h\}}.
\]
Testing the difference equation with \(\varphi_{t,h}\), we obtain
\[
\begin{aligned}
&\frac{1}{h}
\int_{\{t<w_+<t+h\}}
\bigl(\cA_H(\nabla u)-\cA_H(\nabla v)\bigr)
   \cdot(\nabla u-\nabla v)\,dx + \int_{\Omega}(\beta(u)-\beta(v))\varphi_{t,h}\;\dd x\\
&=
\int_{\Omega}(f-g)\varphi_{t,h}\,dx
\leq \|f-g\|_{L^1(\Omega)}
=M.
\end{aligned}
\]
Since \(H(\nabla v)\leq L\) almost everywhere, Lemma \ref{lem:bounded-coercivity} yields
\[
\frac{1}{h}
\int_{\{t<w_+<t+h\}}\Psi_L(H(\nabla w_+))\,dx
\leq CM.
\]
Moreover, by the definition of \(E\), we have
\[
E(t)-E(t+h)
   =\int_{\{t<w_+<t+h\}}\Psi_L(H(\nabla w_+))\,dx.
\]
Dividing both sides by $h$ and letting \(h\downarrow0\) gives
\begin{align}\label{eq:gredient of E}
    -E'(t)\leq CM
\qquad\text{for almost every }t>0.
\end{align}
By Remark \ref{eq:anisotropic isoperimetric inequality}, and Lemma \ref{lem:bounded-coercivity}, we obtain
\begin{align}\label{eq:mu(t)}
\begin{split}
  \mu(t)^{\frac{1}{n'}}
  &\le C\int_{\{w_+=t\}}\frac{H(\nabla w_+)}{|\nabla w_+|}\;\dd \mathcal{H}^{n-1}\\
  &\le C\int_{\{w_+=t\}}\frac{\Psi_L(H(\nabla w_+))^{\frac1p}}{|\nabla w_+|}\;\dd \mathcal{H}^{n-1}
    +CL^{\frac{2-p}{2}}\int_{\{w_+=t\}}\frac{\Psi_L(H(\nabla w_+))^{\frac12}}{|\nabla w_+|}\;\dd \mathcal{H}^{n-1}.
    \end{split}
\end{align}
Applying H\"older's inequality with exponents \(p\) and \(p'\) and Remark \ref{eq:anisotropic isoperimetric inequality}, we obtain
\begin{align*}
\int_{\{w_+=t\}}
\frac{\Psi_L(H(\nabla w_+))^{\frac1p}}{|\nabla w_+|}
\,d\mathcal H^{n-1}
&\le
\left(
\int_{\{w_+=t\}}
\frac{\Psi_L(H(\nabla w_+))}{|\nabla w_+|}
\,d\mathcal H^{n-1}
\right)^{\frac1p} \\
&\quad\times
\left(
\int_{\{w_+=t\}}
\frac{1}{|\nabla w_+|}
\,d\mathcal H^{n-1}
\right)^{\frac{1}{p'}} \\
&\le [-E'(t)]^{\frac1p}[-\mu'(t)]^{\frac{1}{p'}}.
\end{align*}
Similarly, using H\"older's inequality with exponents \(2\) yields
\[
\int_{\{w_+=t\}}
\frac{\Psi_L(H(\nabla w_+))^{\frac12}}{|\nabla w_+|}
\,d\mathcal H^{n-1}
\le [-E'(t)]^{\frac12}[-\mu'(t)]^{\frac12}.
\]
Inserting these into \eqref{eq:mu(t)}, we have
\begin{equation}\label{eq:two-phase-level}
  \mu(t)^{\frac{1}{n'}}
  \le C[-E'(t)]^{\frac1p}[-\mu'(t)]^{\frac{1}{p'}}
  +CL^{\frac{2-p}{2}}[-E'(t)]^{\frac12}[-\mu'(t)]^{\frac12}.
\end{equation}
Combining \eqref{eq:gredient of E} and \eqref{eq:two-phase-level}, at least one term on the right of \eqref{eq:two-phase-level} is at least a fixed multiple of \(\mu(t)^{\frac{1}{n'}}\). Therefore
\begin{equation}\label{eq:two-phase-mu}
  -\mu'(t)
  \ge c\min\Bigl\{M^{-\frac{1}{p-1}}\mu(t)^{\frac{p'}{n'}},\ M^{-1}L^{p-2}\mu(t)^{\frac{2}{n'}}\Bigr\}.
\end{equation} 
Lemma \ref{lem:distribution-inversion} gives
\begin{equation}\label{eq:z-singular}
  z^*(r)
  \le C\bigl[M^{\frac{1}{p-1}}r^{\frac{p-n}{n(p-1)}}+ML^{2-p}r^{\frac{2-n}{n}}\bigr].
\end{equation}
Fix \(s\in(0,m)\), choose \(\zeta\in C_c^\infty((0,s))\), \(\zeta\ge0\), and define \(I\) and \(\Phi\) as Proposition \ref{prop:degenerate-stability}. By Cavalieri and Fubini Theorem, we have
\begin{align}\label{eq:infty norm}
\|\Phi\|_{L^{\infty}} \leq \int_{0}^s z^{*}(r)\zeta(r)\dd r.
\end{align}
By \eqref{eq:z-singular} and \eqref{eq:infty norm},
\begin{equation}\label{eq:Phi-singular}
  \|\Phi\|_{L^\infty}
  \le CM^{\frac{1}{p-1}}\int_0^s\zeta(r)r^{\frac{p-n}{n(p-1)}}\,\dd r
  +CML^{2-p}\int_0^s\zeta(r)r^{\frac{2-n}{n}}\,\dd r.
\end{equation}
Testing the difference equation with \(\Phi\) and using Lemma \ref{lem:bounded-coercivity} gives
\[
  \int_\Omega I(\mu(w_+))\Psi_L(H(\nabla w_+))\,\dd x\le CM\|\Phi\|_{L^\infty}.
\]
Since \(\Psi_L\) is continuous and strictly increasing, \([\Psi_L(H(\nabla w_+))]^*(s)=\Psi_L(H(\nabla w_+)^*(s))\). Hence
\begin{align}\label{eq:Psi-weighted}
  \Psi_L(H(\nabla w_+)^*(s))\int_0^s\zeta(\rho)(s-\rho)\,\dd\rho
  &\le CM^{p'}\int_0^s\zeta(\rho)\rho^{\frac{p-n}{n(p-1)}}\,\dd\rho \notag\\
  &\quad+CM^2L^{2-p}\int_0^s\zeta(\rho)\rho^{\frac{2-n}{n}}\,\dd\rho.
\end{align}
 Using similar inequality \eqref{eq: inequality s}, we obtain
\begin{equation}\label{eq:Psi-pointwise}
  \Psi_L(H(\nabla w_+)^*(s))
  \le C\bigl[M^{p'}s^{-\frac{p'}{n'}}+M^2L^{2-p}s^{-\frac{2}{n'}}\bigr].
\end{equation}
Set \(X=Ms^{-\frac{1}{n'}}\). Then \eqref{eq:Psi-pointwise} reads
\[
  \Psi_L(H(\nabla w_+)^*(s))\le C\bigl[X^{p'}+L^{2-p}X^2\bigr].
\]
By Lemma \ref{lem:modular-inversion},
\begin{equation}\label{eq:D-rearrangement}
  H(\nabla w_+)^*(s)^{p-1}
  \le C\bigl[Ms^{-\frac{1}{n'}}+M^{p-1}s^{-\frac{p-1}{n'}}L^{(p-1)(2-p)}\bigr].
\end{equation}
For \(w_-\), use the similar method. Combining the positive and negative parts and using the subadditivity of \(t\mapsto t^{p-1}\), we get 
\begin{align*}
\begin{split}
  H(\nabla w)^*(s)^{p-1} &\leq H(\nabla w_+)^*\left(\frac{s}{2}\right)^{p-1} + H(\nabla w_-)^*\left(\frac{s}{2}\right)^{p-1}
  \\&\leq C\bigl[Ms^{-\frac{1}{n'}}+M^{p-1}s^{-\frac{p-1}{n'}}L^{(p-1)(2-p)}\bigr],
  \end{split}
\end{align*}
which proofs \eqref{eq:singular-background}.
\end{proof}

\begin{proof}[Proof of Theorem \ref{thm:main}]
Assume first that \(f\in L^\infty(\Omega)\). Fix \(s\in(0,m)\) and put
\[
  \tau=f^*(s),
  \qquad
  f_0=f- \max\{-\tau,\min\{f,\tau\}\},
  \qquad
  f_1=\max\{-\tau,\min\{f,\tau\}\}.
\]
Then \(f=f_0+f_1\), since \(f^*\) is nonincreasing
and \(\tau=f^*(s)\), one has
\[
(f^*(r)-\tau)_+=0\quad\text{for a.e. }r>s.
\]
Consequently,
\begin{align}\label{eq:lorentz norm of f0}
\begin{split}
\|f_0\|_{L^1(\Omega)}
&=\int_0^m(f^*(r)-\tau)_+\,dr  \\
&=\int_0^s(f^*(r)-\tau)\,dr
 =A_f(s)-s\tau
 \leq A_f(s).
 \end{split}
\end{align}
On the other hand, \(|f_1|\leq |f|\), and hence
\[
\|f_1\|_{L^1(\Omega)}
\leq \|f\|_{L^1(\Omega)}.
\]
Since
\[
f_1^*(r)\leq \tau \;\;\text{on}\;(0,s)
\qquad\text{and}\qquad
f_1^*(r)\leq f^*(r) \;\;\text{on}\;(s,m),
\]
the definition of the Lorentz norm yields
\[
\begin{aligned}
\|f_1\|_{L^{n,1}(\Omega)}
&\leq
C\int_0^m f_1^*(r)r^{-\frac{1}{n'}}\,dr \\
&\leq
C\tau\int_0^s r^{-\frac{1}{n'}}\,dr
+C\int_s^m f^*(r)r^{-\frac{1}{n'}}\,dr \\
&=
Cn\tau s^{\frac1n}+CB_f(s).
\end{aligned}
\]
Finally, the monotonicity of \(f^*\) gives
\[
s\tau=s f^*(s)
\leq\int_0^s f^*(r)\,dr=A_f(s).
\]
Since \(s^{\frac1n}=s^{-\frac{1}{n'}}s\), we conclude that
\begin{align}\label{eq:lorentz norm of f1}
\|f_1\|_{L^{n,1}(\Omega)}
\leq
C\left[ns^{-\frac{1}{n'}}A_f(s)+B_f(s)\right].
\end{align}

Let \(u\) and \(u_1\) denote the SOLAs associated with
the data \(f\) and \(f_1\), respectively, with the same boundary condition. Since \(H\) is a norm, the triangle inequality gives
\[
H(\nabla u)
\leq H(\nabla u-\nabla u_1)+H(\nabla u_1)
\qquad\text{a.e. in }\Omega.
\]
Recall that, for any nonnegative measurable functions \(a\) and \(b\),
\[
(a+b)^*(s)
\leq a^*\left(\frac{s}{2}\right)+b^*\left(\frac{s}{2}\right),
\qquad 0<s<|\Omega|.
\]
Applying this rearrangement inequality to the preceding estimate yields
\begin{equation}\label{eq:inequality of two parts}
[H(\nabla u)]^*(s)
\leq
[H(\nabla u-\nabla u_1)]^*\left(\frac{s}{2}\right)
+
[H(\nabla u_1)]^*\left(\frac{s}{2}\right).
\end{equation}
Indeed, by Corollary \ref{cor:infty norm result} and \eqref{eq:lorentz norm of f1},
\begin{equation}
\begin{aligned}
\|H(\nabla u_1)\|_{L^\infty(\Omega)}
&\leq
C\|f_1\|_{L^{n,1}(\Omega)}^{\frac{1}{p-1}}\\
&\leq
C\left(
n s^{-\frac{1}{n'}}A_f(s)+B_f(s)
\right)^{\frac{1}{p-1}}.
\end{aligned}
\label{eq:regular-gradient-bound}
\end{equation}
It follows that
\begin{equation}
[H(\nabla u_1)]^*\left(\frac{s}{2}\right)
\leq
C(n)\left(
s^{-\frac{1}{n'}}A_f(s)+B_f(s)
\right)^{\frac{1}{p-1}}.
\label{eq:regular-rearrangement-bound}
\end{equation}

We now distinguish between the degenerate and singular region.

\medskip
\noindent
\emph{Case 1: $2\leq p<n$.}

By Proposition
\ref{prop:degenerate-stability} and \eqref{eq:lorentz norm of f0}, we have
\begin{align}\label{eq: difference rearrangement}
  [H(\nabla u-\nabla u_1)]^*\left(\frac{s}{2}\right)^{p-1}
  \le Cs^{-\frac{p-1}{q}}A_f(s).
\end{align}
Since \(-\frac{p-1}{q}=-\frac{1}{n'}\), using \eqref{eq:inequality of two parts}, \eqref{eq: difference rearrangement}, and the fact that \((a+b)^{p-1}\le C(a^{p-1}+b^{p-1})\), we have
\[
  [H(\nabla u)]^*(s)^{p-1}
  \le C\bigl[s^{-\frac{1}{n'}}A_f(s)+B_f(s)\bigr].
\]
This is the first line of \eqref{eq:Rp}.

\medskip
\noindent
\emph{Case 2: $1< p<2$.}

Put
\[
  N=\|f\|_{L^1},
  \qquad
  M=\|f_0\|_{L^1}.
\]
Apply Proposition \ref{prop:singular-background} with \(v=u_1\). It gives
\begin{equation}\label{eq:singular-after-split}
  [H(\nabla u-\nabla u_1)]^*\left(\frac{s}{2}\right)^{p-1}
  \le C\Bigl[s^{-\frac{1}{n'}}\|f_0\|_{L^1}+s^{-\frac{p-1}{n'}}\|f_0\|_{L^1}^{p-1}L^{(p-1)(2-p)}\Bigr].
\end{equation}
The first term, since \eqref{eq:lorentz norm of f0} and the definition of $A_f(s)$, we have \(\|f_0\|_{L^1}\le C_1A_f(s)\le C_2\|f\|_{L^1}\) and
\begin{equation}\label{eq:absorb-linear}
  A_f(s)\leq C\|f\|_{L^1}^{2-p}A_f(s)^{p-1}.
\end{equation}
Hence, we have
\[
s^{-\frac{1}{n'}}\|f_0\|_{L^1} \leq C_1s^{-\frac{1}{n'}}A_{f}(s)\leq Cs^{-\frac{1}{n'}}\|f\|_{L^1}^{2-p}A_f(s)^{p-1}.
\]
We notice that the definition of $A_f(s)$ and $B_f(s)$. Since \(r^{-\frac{1}{n'}}\leq s^{-\frac{1}{n'}}\) for \(r\geq s\), we have
\begin{align}
\begin{split}
B_f(s)
&=\int_s^{|\Omega|}f^*(r)r^{-\frac{1}{n'}}\,dr\\
&\leq s^{-\frac{1}{n'}}\int_s^{|\Omega|}f^*(r)\,dr\\
&=s^{-\frac{1}{n'}}\bigl(\|f\|_{L^1(\Omega)}-A_f(s)\bigr).
\end{split}
\end{align}
Consequently, using again \(\|f_0\|_{L^1}\le C_1A_f(s)\le C_2\|f\|_{L^1}\) and \eqref{eq:lorentz norm of f1}, we obtain
\begin{align}\label{eq:last sum of f1 norm}
\begin{split}
\|f_1\|_{L^{n,1}(\Omega)}
&\leq C\left(ns^{-\frac{1}{n'}}A_f(s)+B_f(s)\right)\\
&\leq Cs^{-\frac{1}{n'}}
   \left(\|f\|_{L^1}+(n-1)A_f(s)\right)\\
&\leq Cn\|f\|_{L^1}s^{-\frac{1}{n'}}.
\end{split}
\end{align}
For the second term of \eqref{eq:singular-after-split}, applying Corollary \ref{cor:infty norm result} and \eqref{eq:last sum of f1 norm} we obtain
\begin{align}\label{eq:second-singular-term}
  s^{-\frac{p-1}{n'}}\|f_0\|_{L^1}^{p-1}L^{(p-1)(2-p)}
  &\le Cs^{-\frac{p-1}{n'}}A_f(s)^{p-1}\bigl(\|f\|_{L^1}s^{-\frac{1}{n'}}\bigr)^{2-p}\notag\\
  &=Cs^{-\frac{1}{n'}}\|f\|_{L^1}^{2-p}A_f(s)^{p-1}.
\end{align}
Since \(0<p-1<1\), the map \(t\mapsto t^{p-1}\) is
subadditive on \([0,\infty)\). Hence, combining with inequalities above, we obtain
\begin{align*}
[H(\nabla u)]^*(s)^{p-1}
&\leq
[H(\nabla u-\nabla u_1)]^*\left(\frac{s}{2}\right)^{p-1}
+[H(\nabla u_1)]^*\left(\frac{s}{2}\right)^{p-1}\\
&\leq
C\Bigl[s^{-\frac{1}{n'}}\|f_0\|_{L^1}+s^{-\frac{p-1}{n'}}\|f_0\|_{L^1}^{p-1}L^{(p-1)(2-p)}\Bigr]
+\|H(\nabla u_1)\|_{L^\infty(\Omega)}^{p-1}\\
&\leq C s^{-\frac{1}{n'}}\|f\|_{L^1}^{2-p}A_f(s)^{p-1} + 
  \|H(\nabla u_1)\|_{L^\infty(\Omega)}^{p-1}
\end{align*}
Corollary \ref{cor:infty norm result}, \eqref{eq:lorentz norm of f1} and \eqref{eq:absorb-linear} give
\begin{align}
\begin{split}
\|H(\nabla u_1)\|_{L^\infty(\Omega)}^{p-1}
&\leq 
C \|f_1\|_{L^{n,1}(\Omega)}\\
&\leq C\left[s^{-\frac{1}{n'}}A_f(s)+B_f(s)\right]\\
&= C\left[ s^{-\frac{1}{n'}}N^{2-p}A_f(s)^{p-1} +B_f(s)\right].
\end{split}
\end{align}
Therefore,
\[
[H(\nabla u)]^*(s)^{p-1}
\leq
C\left[
s^{-\frac{1}{n'}}N^{2-p}A_f(s)^{p-1}
+B_f(s)
\right].
\]
Since we choose $f\in L^{\infty}(\Omega)$ first, then let take $f\in L^{1}(\Omega)$ and let $u$ be the corresponding SOLA. Choose $f_{j}\in L^{\infty}(\Omega)$ such that
\[
    f_{j}\to f
    \quad\text{in }L^{1}(\Omega).
\]
Let $u_{j}$ be the corresponding weak
solutions. Recalling Remark \ref{rem:L1-approximable}, which finishes the proof the Theorem \ref{thm:main}.
\end{proof}

\begin{remark}
Fix \(0<s<m =|\Omega|\). In the Neumann case, the
compatibility condition has zero mean. Although
\[
\int_\Omega f\,dx=0
\qquad\text{and}\qquad
f=f_0+f_1,
\]
the functions \(f_0\) and \(f_1\) need not have zero mean separately.
We therefore define
\[
Ph=h-\frac{1}{m}\int_\Omega h\,dx
\]
and replace \(f_0,f_1\) by \(Pf_0,Pf_1\). Then
\[
\int_\Omega Pf_i\,dx=0,\qquad i=0,1,
\]
and, since \(Pf=f\),
\[
Pf_0+Pf_1=P(f_0+f_1)=Pf=f.
\]
Moreover,
\[
\|Pf_0\|_{L^1(\Omega)}
\leq
\|f_0\|_{L^1(\Omega)}
+\left|\int_\Omega f_0\,dx\right|
\leq 2A_f(s).
\]
Since \(\int_\Omega f\,dx=0\), one also has
\[
\left|\int_\Omega f_1\,dx\right|
=
\left|\int_\Omega f_0\,dx\right|
\leq A_f(s).
\]
Using
\[
\|1\|_{L^{n,1}(\Omega)}=nm^{1/n}
\]
and \(0<s<m\), which implies
\[
m^{-\frac{1}{n'}}\leq s^{-\frac{1}{n'}},
\]
we obtain
\begin{align*}
\|Pf_1\|_{L^{n,1}(\Omega)}
&\leq
\|f_1\|_{L^{n,1}(\Omega)}
+\frac{1}{m}
 \left|\int_\Omega f_1\,dx\right|
 \|1\|_{L^{n,1}(\Omega)}\\
&\leq
C\left[s^{-\frac{1}{n'}}A_f(s)+B_f(s)\right]
+nm^{-\frac{1}{n'}}A_f(s)\\
&\leq
C\left[s^{-\frac{1}{n'}}A_f(s)+B_f(s)\right].
\end{align*}
Thus the projection preserves both the decomposition and the required
estimates while making each component admissible as a Neumann datum.
Henceforth, in the Neumann case, we continue to write \(f_0,f_1\) for
the projected functions \(Pf_0,Pf_1\).
\end{remark}

\section{Endpoint consequences and model examples}\label{section6}

\subsection{Endpoint formulations}

\begin{corollary}\label{cor:endpoints}
Under the hypotheses of \cref{thm:main},
\[
 L^1(\Omega)\longrightarrow L^{q,\infty}(\Omega)
 \quad\text{and}\quad
 L^{n,1}(\Omega)\longrightarrow L^\infty(\Omega).
\]
 The first assertion
in fact holds for all $1<p<n$.
\end{corollary}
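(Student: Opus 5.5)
The corollary is obtained by reading off the two endpoint cases of estimates already proved; no new test-function argument is needed. For the weak-type assertion $L^1(\Omega)\to L^{q,\infty}(\Omega)$, with $q=\frac{n(p-1)}{n-1}$, we simply invoke \Cref{thm:individual}. That theorem was proved for every $1<p<n$, under both boundary conditions, and uses only the identity $\cA_H(\nabla u)\cdot\nabla u=H(\nabla u)^p$. This is what justifies the remark that the first assertion holds on the full range. As a consistency check, in the degenerate range the same bound also follows from \eqref{eq:main-estimate}: since $A_f(s)\le\|f\|_{L^1}$ and
\[
B_f(s)\le s^{-\frac1{n'}}\bigl(\|f\|_{L^1}-A_f(s)\bigr),
\]
one gets $\cR_pf(s)\le Cs^{-\frac1{n'}}\|f\|_{L^1}$. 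Raising to the power $\frac1{p-1}$ and noting $\frac{1}{n'(p-1)}=\frac1q$ gives $\sup_s s^{1/q}[H(\nabla u)]^*(s)\le C\|f\|_{L^1}^{1/(p-1)}$. In the singular range the first term of $\cR_pf$ is bounded in the same way, because $\|f\|_{L^1}^{2-p}A_f(s)^{p-1}\le\|f\|_{L^1}$.

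For the Lipschitz assertion $L^{n,1}(\Omega)\to L^\infty(\Omega)$, the cleanest route is \Cref{cor:infty norm result}, which is the specialization of \cite{AntoniniCianchi2025}. In the Neumann case the datum has zero mean, and the bound is $\|H(\nabla u)\|_{L^\infty}\le C\|f\|_{L^{n,1}}^{1/(p-1)}$. To show that this is also encoded in \Cref{thm:main} when $2\le p<n$, bound each term of $\cR_pf(s)$ by the $L^{n,1}$ quasinorm. First, $B_f(s)\le\int_0^mf^*(r)r^{-1/n'}\,dr$. Second, since $r^{-1/n'}\ge s^{-1/n'}$ on $(0,s)$,
\[
s^{-\frac1{n'}}A_f(s)\le\int_0^sf^*(r)r^{-\frac1{n'}}\,dr .
\]
Hence $\cR_pf(s)\le C\|f\|_{L^{n,1}}$ uniformly in $s$, and letting $s\downarrow0$ gives $\|H(\nabla u)\|_{L^\infty}=\lim_{s\downarrow0}[H(\nabla u)]^*(s)$.

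The only point needing care is the singular range $1<p<2$. There the factor $\|f\|_{L^1}^{2-p}A_f(s)^{p-1}s^{-1/n'}$ does not obviously stay bounded as $s\to0$ for a general $L^{n,1}$ datum. The difficulty is that $A_f(s)\le Cs^{1/n'}\|f\|_{L^{n,1}}$ only yields a bound of order $s^{-(2-p)/n'}$. For this reason we do not try to extract the $L^\infty$ endpoint from \eqref{eq:Rp} when $p<2$, and instead cite \Cref{cor:infty norm result} directly, which is valid for all $p>1$. The passage from bounded data to SOLAs is handled as in \Cref{rem:L1-approximable}, using lower semicontinuity of the weak and $L^\infty$ norms under a.e. convergence of gradients as in \Cref{lem:L1-energy-obstruction-and-passage}.
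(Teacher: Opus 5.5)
Your proposal is correct and follows the paper's route. The paper gives this corollary no separate proof and treats it as the combination of Theorem~\ref{thm:individual} (valid for all $1<p<n$) with the Antonini--Cianchi Lipschitz bound recorded in Corollary~\ref{cor:infty norm result}, exactly as you do. Your extra consistency checks from \eqref{eq:main-estimate} are also sound, including the observation that the singular line of $\cR_p$ does not by itself give the $L^\infty$ endpoint, which matches the paper's own caveat for the singular range.
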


In the degenerate range, the usual finite- and infinite-tail
representations remain available.  If $f^*$ is extended by zero to
$(m,\infty)$ and
\[
 \widetilde f^{**}(r)=\frac1r\int_0^r\widetilde f^*(t)\,\dd t,
\]
then Fubini theorem gives the exact identity
\begin{equation}\label{eq:infinite-tail}
 \int_s^\infty\widetilde f^{**}(r)r^{-\frac{1}{n'}}\,\dd r
 =n'\left[s^{-\frac{1}{n'}}A_f(s)+B_f(s)\right].
\end{equation}
For $0<s\le m/2$, the same expression is bounded by
$C\int_s^m f^{**}(r)r^{-\frac{1}{n'}}\,\dd r$.  Near $s=m$, a half-shifted
lower limit or the zero-extended formula
\eqref{eq:infinite-tail} is essential.

\subsection{A natural gradient-growth balance}

\begin{corollary}\label{prop:natural-growth}
Let $p_*(n)<p<n$ and let $u$ be a bounded weak solution, with either boundary condition, of
\begin{equation}\label{eq:natural-equation}
 -\Delta_p^H u=g(x,u,\nabla u)\quad\text{in }\Omega.
\end{equation}
Assume that, for some $\kappa\in\R$ and $F\in L^1(\Omega)$,
\begin{equation}\label{eq:natural-balance}
 |g(x,u,\nabla u)-\kappa H(\nabla u)^p|
 \le F(x)\quad\text{a.e.} \text{in }\Omega.
\end{equation}
 Then
\begin{equation}\label{eq:natural-result}
 [H(\nabla u)]^*(s)^{p-1}
 \le C
 \exp\!\big(\esssup_\Omega\kappa u-\essinf_\Omega\kappa u\big)
 \cR_pF(s).
\end{equation}
\end{corollary}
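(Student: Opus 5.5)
The natural route is an exponential change of variables, which converts the natural-growth equation into one of the form treated in \Cref{thm:main}, with an $L^1$ right-hand side controlled by $F$. If $\kappa=0$, then $u$ solves $-\Delta_p^H u=g$ with $|g|\le F$, and the claim is \Cref{thm:main} up to a comparison argument on the datum. Assume $\kappa\neq0$ and set $\gamma=\kappa/(p-1)$ and $w=\Theta(u)$ with
\[
\Theta(t)=\frac{e^{\gamma t}-e^{\gamma c}}{\gamma},
\]
where $c=0$ in the Dirichlet case, so that $w=0$ on $\partial\Omega$. In the Neumann case $c$ is arbitrary and only the median normalization matters. Then $\nabla w=e^{\gamma u}\nabla u$. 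Since $H$ is one-homogeneous and $\nabla H$ is zero-homogeneous,
\[
\cA_H(\nabla w)=e^{(p-1)\gamma u}\cA_H(\nabla u)=e^{\kappa u}\cA_H(\nabla u).
\]
Formally,
\[
-\Delta_p^H w=e^{\kappa u}\bigl(g-\kappa H(\nabla u)^p\bigr)=:\tilde g,
\]
and by \eqref{eq:natural-balance} we get $|\tilde g|\le e^{\sup \kappa u}F$.

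The first step is to justify this rigorously. Since $u$ is bounded, $e^{\kappa u}$ is bounded and belongs to $W^{1,p}$. Testing the equation for $u$ with $e^{\kappa u}\varphi$, for bounded $\varphi$ in the energy space, gives the weak formulation for $w$ with datum $\tilde g\in L^1$. In the Neumann case, take $\varphi$ constant; this yields $\int_\Omega\tilde g=0$, which is exactly the compatibility condition. The solution $w$ is a bounded energy solution with $L^1$ datum, so it coincides with the SOLA. Apply \Cref{thm:main} (and in the Neumann case subtract the median of $w$, which does not change the gradient). This gives
\[
[H(\nabla w)]^*(s)^{p-1}\le C\,\cR_p\tilde g(s).
\]
Next we need monotonicity of $\cR_p$ under pointwise domination. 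If $|\tilde g|\le KF$, then $\tilde g^*\le KF^*$, so $A_{\tilde g}\le KA_F$, $B_{\tilde g}\le KB_F$ and $\|\tilde g\|_1\le K\|F\|_1$. Both lines of \eqref{eq:Rp} are increasing in these quantities and one-homogeneous; in particular $\|\cdot\|_1^{2-p}A^{p-1}$ has total degree one. Hence $\cR_p\tilde g\le e^{\sup\kappa u}\cR_pF$.

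To return to $u$, note that $H(\nabla u)=e^{-\gamma u}H(\nabla w)\le e^{-\gamma\inf u}H(\nabla w)$ when $\gamma>0$, with the analogous bound for $\gamma<0$. Since multiplication by a constant bound commutes with rearrangement,
\[
[H(\nabla u)]^*(s)^{p-1}\le e^{-(p-1)\gamma\,\essinf u}[H(\nabla w)]^*(s)^{p-1}=e^{-\essinf\kappa u}[H(\nabla w)]^*(s)^{p-1}.
\]
Combining the two factors gives $\exp(\esssup\kappa u-\essinf\kappa u)\,\cR_pF(s)$, which is \eqref{eq:natural-result}.

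The main obstacle is making sure the change of variables is admissible for the cited theorems. In the Dirichlet case the shift must keep $w=0$ on $\partial\Omega$. In the Neumann case the transformed datum must still have zero mean. A further delicate point is that \Cref{thm:main} is stated for $\beta$-equations with $f\in L^1$; we only use it with $\beta\equiv0$, which is covered.

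The restriction $p>p_*(n)$ in the statement does not appear to be needed by this argument. If it is actually required, I would expect it to come from \Cref{thm:main} needing $n\ge3$ in the singular case, and I would check that point.
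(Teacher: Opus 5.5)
Your proposal is correct and follows the paper's proof essentially verbatim. The paper takes $v=\psi(u)$ with $\psi(t)=\int_0^t e^{\kappa\tau/(p-1)}\,\dd\tau$, which is your $\Theta$ with $c=0$, and then uses the same ingredients: the identity $\cA_H(\nabla v)=e^{\kappa u}\cA_H(\nabla u)$, the bound $\cR_p h\le e^{\esssup\kappa u}\cR_pF$ from monotonicity and degree-one homogeneity, and $e^{-\kappa u}\le e^{-\essinf\kappa u}$ to return to $u$. The paper merely asserts the points you justify: the test function $e^{\kappa u}\varphi$, zero mean via a constant test function, and identification of $v$ with the SOLA.

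You are also right that $p>p_*(n)$ plays no role in either argument. It does not force $n\ge 3$, however: for instance $p_*(2)=4/3$. So for $n=2$ and $4/3<p<2$, both your proof and the paper's use the singular line of \Cref{thm:main} outside its stated range $n\ge 3$.
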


\begin{proof}
Define
\[
 \psi(t)=\int_0^t
 \exp\!\left(\frac{\kappa\tau}{p-1}\right)\dd\tau,
 \qquad v=\psi(u).
\]
The $(p-1)$-homogeneity of $\cA$ gives
\[
 \cA(\nabla v)=e^{\kappa u}\cA(\nabla u)
\]
and hence
\begin{equation}\label{eq:transformed}
 -\Delta_p^H v=e^{\kappa u}
 \big(g(x,u,\nabla u)-\kappa H(\nabla u)^p\big).
\end{equation}
The boundary condition is preserved. And the transformed datum has
zero mean in the Neumann case.

Let $h$ denote the right-hand side of \eqref{eq:transformed}.  From
\eqref{eq:natural-balance},
$|h|\le e^{\esssup\kappa u}F$.  Since both lines of
\eqref{eq:Rp} are homogeneous of degree one and monotone yields,
\[
 \cR_ph(s)\le e^{\esssup\kappa u}\cR_pF(s).
\]
Also,
\[
 H(\nabla v)^{p-1}=e^{\kappa u}H(\nabla u)^{p-1}.
\]
Apply \cref{thm:main} to $v$ and use
$e^{-\kappa u}\le e^{-\essinf\kappa u}$ to obtain
\eqref{eq:natural-result}.
\end{proof}

\subsection{Wulff-ball solution}

\begin{corollary}\label{prop:wulff}
Let $\Omega=\Wulff_R$ and $f\equiv1$.  Then
\begin{equation}\label{eq:wulff-solution}
 u(x)=\frac{p-1}{p}\,n^{-\frac{1}{p-1}}
 \left(R^{p'}-H^\circ(x)^{p'}\right)
\end{equation}
solves the homogeneous Dirichlet problem, and
\begin{equation}\label{eq:wulff-rearrangement}
 [H(\nabla u)]^*(s)^{p-1}
 =\frac1n\left(R^n-\frac{s}{\kappa_n}\right)^{\frac1n},
 \qquad 0<s<\kappa_n R^n.
\end{equation}
\end{corollary}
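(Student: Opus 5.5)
The plan is to verify directly that the explicit function \eqref{eq:wulff-solution} is a classical, hence weak, solution of $-\Delta_p^H u=1$ in $\Wulff_R$ with $u=0$ on $\partial\Wulff_R$. Here $\beta\equiv0$, as is implicit for this model problem. Since the datum is bounded, uniqueness of weak solutions and their coincidence with the SOLA give that this $u$ is the SOLA. The rearrangement formula \eqref{eq:wulff-rearrangement} then comes from computing the distribution function of $H(\nabla u)^{p-1}$.

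\emph{Step 1: the gradient.} Write $u=c\,(R^{p'}-H^\circ(x)^{p'})$ with $c$ still free. For $x\neq0$,
\[
\nabla u=-c\,p'\,H^\circ(x)^{p'-1}\nabla H^\circ(x).
\]
Using the evenness and one-homogeneity of $H$ together with \eqref{HHcirc}, we get
\[
H(\nabla u)=c\,p'\,H^\circ(x)^{\frac{1}{p-1}}.
\]
Since $\nabla H$ is odd and $0$-homogeneous, \eqref{Hcircxi} gives
\[
\nabla H(\nabla u)=-\nabla H\bigl(\nabla H^\circ(x)\bigr)=-\frac{x}{H^\circ(x)}.
\]
Hence
\[
\cA_H(\nabla u)=-(cp')^{p-1}H^\circ(x)\,\frac{x}{H^\circ(x)}=-(cp')^{p-1}x .
\]
This vector field extends smoothly across $x=0$, where $\nabla u=0$ and $\cA_H(0)=0$. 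Consequently $-\operatorname{div}\cA_H(\nabla u)=n(cp')^{p-1}$ holds in the classical sense in all of $\Wulff_R$, and the weak formulation follows by integration by parts. The equation $-\Delta_p^H u=1$ forces $(cp')^{p-1}=1/n$, that is, $c=\frac{p-1}{p}n^{-\frac{1}{p-1}}$, which is the constant in \eqref{eq:wulff-solution}. The boundary condition holds because $H^\circ=R$ on $\partial\Wulff_R$.

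\emph{Step 2: the rearrangement.} From Step 1, $H(\nabla u)^{p-1}=(cp')^{p-1}H^\circ(x)=H^\circ(x)/n$, which is a radially increasing function of the anisotropic radius. For $0\le t<R/n$,
\[
\bigl|\{H(\nabla u)^{p-1}>t\}\bigr|=\bigl|\Wulff_R\setminus\overline{\Wulff_{nt}}\bigr|=\kappa_n\bigl(R^n-(nt)^n\bigr),
\]
using $|\Wulff_\rho|=\kappa_n\rho^n$. This distribution function is continuous and strictly decreasing, so the decreasing rearrangement is its inverse. Solving $s=\kappa_n(R^n-(nt)^n)$ for $t$ yields
\[
t=\frac1n\Bigl(R^n-\frac{s}{\kappa_n}\Bigr)^{\frac1n}.
\]
Because $t\mapsto t^{p-1}$ is increasing, $[H(\nabla u)]^*(s)^{p-1}=[H(\nabla u)^{p-1}]^*(s)$, and \eqref{eq:wulff-rearrangement} follows.

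\emph{Main obstacle.} The only delicate point is the anisotropic identity $\nabla H(\nabla H^\circ(x))=x/H^\circ(x)$, which makes $\cA_H(\nabla u)$ exactly linear in $x$. It must be applied with the sign handled through the oddness of $\nabla H$, since $H$ is even. Everything else is routine. Regularity at the origin poses no issue because the flux $\cA_H(\nabla u)$ is linear.
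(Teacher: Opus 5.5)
Your proof is correct and follows the same route as the paper. Both compute $H(\nabla u)$ and the flux $\cA_H(\nabla u)=-x/n$ from the duality identities for $H$ and $H^\circ$, read off $-\operatorname{div}\cA_H(\nabla u)=1$ and the zero boundary values, and invert the distribution function using $|\Wulff_\rho|=\kappa_n\rho^n$; you also supply details the paper leaves implicit, namely the sign via oddness of $\nabla H$, the smoothness of the flux at the origin, and the explicit inversion.
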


\begin{proof}
Set $r=H^\circ(x)$.  By property of $H$, we have
\[
 H(\nabla u)=n^{-\frac{1}{p-1}}r^{\frac{1}{p-1}},\qquad
 \cA(\nabla u)=-\frac{x}{n}.
\]
Thus $-\operatorname{div}\cA(\nabla u)=1$, and the boundary value
vanishes at $r=R$.  Since
$|\Wulff_\rho|=\kappa_n\rho^n$ and $H(\nabla u)$ is increasing in
$r$, inversion of its distribution function gives
\eqref{eq:wulff-rearrangement}.
\end{proof}

\begin{remark}\label{rem:large-s}
For $f\equiv1$ and $m=\kappa_n R^n$,
\[
 \int_s^m f^{**}(r)r^{-\frac{1}{n'}}\,\dd r
 =n(m^{\frac1n}-s^{\frac1n})\sim m^{-\frac{1}{n'}}(m-s)
\]
as $s\uparrow m$, whereas the left-hand side of
\eqref{eq:wulff-rearrangement} has order $(m-s)^{\frac1n}$. Hence it is not
valid on the entire interval.  Formula \eqref{eq:main} has the
correct large-$s$ structure.  In particular, in the singular range
its first term does not spuriously vanish as $s\rightarrow m$.
\end{remark}

\section{Applications to function-space estimates}
\label{section7}

\begin{lemma}\label{app:lem:X-into-L1}
Let $\Omega$ be a finite measurable set and let $X(\Omega)$ be a rearrangement-invariant quasi-normed
function space. Assume that $X(\Omega)$ satisfies the local
integrability axiom: for every measurable set
$E\subset\Omega$ with $|E|<\infty$, there exists a constant
$C_E>0$ such that
\begin{equation*}
    \int_E |f(x)|\,\mathrm{d}x
    \le C_E\| f\|_{X(\Omega)}
    \qquad\text{for every }f\in X(\Omega).
\end{equation*}
Then
\begin{equation*}
    X(\Omega)\hookrightarrow L^1(\Omega).
\end{equation*}
More precisely, there exists a constant $C_X>0$, depending only on
$X(\Omega)$ and $\Omega$, such that
\begin{equation*}
    \| f\|_{L^1(\Omega)}
    \le C_X\| f\|_{X(\Omega)}
    \qquad\text{for every }f\in X(\Omega).
\end{equation*}
\end{lemma}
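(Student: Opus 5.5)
The plan is to use the rearrangement invariance of $X(\Omega)$ together with the finiteness of $|\Omega|$. The key observation is that $\Omega$ is itself a measurable set of finite measure, so the local integrability axiom can be applied directly with $E=\Omega$. This gives a constant $C_\Omega$ such that
\[
\int_\Omega |f(x)|\,\mathrm dx\le C_\Omega\|f\|_{X(\Omega)}
\qquad\text{for every } f\in X(\Omega).
\]
The conclusion then follows with $C_X:=C_\Omega$.

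I would present the argument in three short steps. First, note that every $f\in X(\Omega)$ is measurable and that $|\Omega|<\infty$ by hypothesis. Second, invoke the axiom with $E=\Omega$ to obtain the bound, which shows in particular that $f\in L^1(\Omega)$. Third, record that this bound is exactly the continuity of the identity map from $X(\Omega)$ into $L^1(\Omega)$. The constant depends only on $X(\Omega)$ and $\Omega$, as claimed.

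For robustness in a quasi-normed setting, it is worth adding a remark that rearrangement invariance lets one rewrite $\|f\|_{L^1(\Omega)}=\int_0^{|\Omega|}f^*(s)\,\mathrm ds$. This makes the embedding compatible with the later Hardy-type arguments in \Cref{section7}, where the $L^1$ norm is controlled through $f^*$. The remark is not needed for the proof itself.

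I do not expect any real obstacle here. The only point requiring care is to state explicitly that the axiom is assumed for sets of finite measure, so that $E=\Omega$ is admissible even though $\Omega$ is not a proper subset. No triangle inequality is used, so the quasi-norm causes no difficulty.
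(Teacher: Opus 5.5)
Your argument is correct and rests on the same key step as the paper: the local integrability axiom applied with $E=\Omega$, which is admissible because $|\Omega|<\infty$. The paper first passes to truncations $v_k=\min\{|v|,k\}$, uses the lattice property $\|v_k\|_{X(\Omega)}\le\|v\|_{X(\Omega)}$ and then monotone convergence; since the axiom is assumed for every $f\in X(\Omega)$, that detour is unnecessary, and your direct application already gives the bound with $C_X=C_\Omega$ (rearrangement invariance is not needed).
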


\begin{proof}
Let $v\in X(\Omega)$. For every $k\in\mathbb N$, define the
nonnegative truncation
\[
    v_k(x)=\min\{|v(x)|,k\},
    \qquad x\in\Omega.
\]
Since
\[
    0\le v_k(x)\le |v(x)|
    \qquad\text{for almost every }x\in\Omega,
\]
the lattice property of $X(\Omega)$ implies that
\[
    v_k\in X(\Omega)
    \qquad\text{and}\qquad
    \lVert v_k\rVert_{X(\Omega)}
    \le \lVert v\rVert_{X(\Omega)}.
\]

Since $|\Omega|<\infty$, we can apply with $E=\Omega$. Hence, for every $k\in\mathbb N$,
\begin{align}
    \int_\Omega v_k(x)\,\mathrm{d}x
    &\le C_\Omega\lVert v_k\rVert_{X(\Omega)}
    \notag\\
    &\le C_\Omega\lVert v\rVert_{X(\Omega)}.
    \label{app:eq:truncation-L1-bound}
\end{align}
The constant $C_\Omega$ is independent of both $v$ and $k$.

Moreover,
\[
    0\le v_k(x)\rightarrow |v(x)|
    \qquad\text{for almost every }x\in\Omega
\]
as $k\to\infty$. Therefore, the monotone convergence theorem and
\eqref{app:eq:truncation-L1-bound} yield
\begin{align}
    \lVert v\rVert_{L^1(\Omega)}
    &=\int_\Omega |v(x)|\,\mathrm{d}x
    \notag\\
    &=\lim_{k\to\infty}
      \int_\Omega v_k(x)\,\mathrm{d}x
    \notag\\
    &\le C_\Omega\lVert v\rVert_{X(\Omega)}.
    \label{app:eq:X-L1-detailed}
\end{align}
In particular, the integral on the left-hand side is finite.
Thus, every function in $X(\Omega)$ belongs to $L^1(\Omega)$, and
the inclusion map
\begin{equation*}
    X(\Omega)\hookrightarrow L^1(\Omega).
\end{equation*}
is continuous. This completes the proof.

\end{proof}

\begin{corollary}\label{app:prop:ri-reduction}
Let $X(\Omega)$ and $Y(\Omega)$ be
rearrangement-invariant quasi-normed spaces, with representation spaces
$\overline X(0,|\Omega|)$ and $\overline Y(0,|\Omega|)$. Assume that, for every nonnegative function $f \in X(\Omega)$, if $2 \leq p<n$, and 
\begin{equation}\label{eq:Af(s)1}
\left\|s^{-\frac{1}{n'}}\int_0^s f^*(r)\,\dd r\right\|
 _{\overline Y(0,|\Omega|)} \leq
 C\|f^*\|_{\overline X(0,|\Omega|)}
\end{equation}
with
\begin{equation}\label{eq:Bf(s)1}
\left\|\int_s^m f^*(r)r^{-\frac{1}{n'}}\,\dd r\right\| _{\overline Y(0,|\Omega|)} \leq C\|f^*\|_{\overline X(0,|\Omega|)}
\end{equation}
or for $1<p<2$, and
\begin{equation}\label{eq:Af(s)2}
 \left\|
 s^{-\frac{1}{n'}}
 \left(\int_0^s f^*(r)\,\mathrm{d}r\right)^{p-1}
 \right\|_{\overline Y(0,M)}
 \le
 C\lVert f^*\rVert_{\overline X(0,M)}^{p-1}
\end{equation}
with
\begin{equation}\label{eq:Bf(s)2}
\left\|\int_s^m f^*(r)r^{-\frac{1}{n'}}\,\dd r\right\| _{\overline Y(0,|\Omega|)} \leq C\|f^*\|_{\overline X(0,|\Omega|)}.
\end{equation}
Then
\begin{equation} \label{app:eq:ri-conclusion}
 \| H(\nabla u)^{p-1}\|_{Y(\Omega)}
 \le C\|f\|_{X(\Omega)}.
\end{equation}
\end{corollary}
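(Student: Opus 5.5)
The plan is to start from the pointwise estimate \eqref{eq:main-estimate} of \cref{thm:main} and apply the $\overline Y$ quasi-norm to both sides. Since $H(\nabla u)^{p-1}$ is a nonnegative function of $x$ and $t\mapsto t^{p-1}$ is increasing and continuous, we have
\[
[H(\nabla u)^{p-1}]^*(s)=\bigl([H(\nabla u)]^*(s)\bigr)^{p-1},
\]
and therefore $\|H(\nabla u)^{p-1}\|_{Y(\Omega)}=\|([H(\nabla u)]^*)^{p-1}\|_{\overline Y(0,|\Omega|)}$ by rearrangement invariance. It is enough to treat nonnegative $f$. The right-hand side of \eqref{eq:main-estimate} depends only on $f^*$, which equals $|f|^*$, and we may replace $f$ by $|f|$ in the hypotheses.

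\textbf{Degenerate case $2\le p<n$.} The lattice property of $\overline Y$ and \eqref{eq:main-estimate} give
\[
\|([H(\nabla u)]^*)^{p-1}\|_{\overline Y}\le C\bigl\|s^{-\frac1{n'}}A_f(s)+B_f(s)\bigr\|_{\overline Y}.
\]
The quasi-triangle inequality splits this into the two terms controlled by \eqref{eq:Af(s)1} and \eqref{eq:Bf(s)1}. This yields $C\|f^*\|_{\overline X}=C\|f\|_{X(\Omega)}$.

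\textbf{Singular case $1<p<2$.} Here the first term of $\cR_pf$ carries the factor $\|f\|_{L^1(\Omega)}^{2-p}$. We first use \cref{app:lem:X-into-L1}: every rearrangement-invariant space on a finite-measure set that satisfies the local integrability axiom embeds in $L^1$, so $\|f\|_{L^1}\le C_X\|f\|_X$. Then
\[
\bigl\|s^{-\frac1{n'}}\|f\|_{L^1}^{2-p}A_f(s)^{p-1}\bigr\|_{\overline Y}\le C\|f\|_X^{2-p}\,\|f^*\|_{\overline X}^{p-1}= C\|f\|_X,
\]
using the homogeneity of the quasi-norm and \eqref{eq:Af(s)2}. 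The $B_f$ term is handled by \eqref{eq:Bf(s)2} as before. Combining the two bounds with the quasi-triangle inequality gives \eqref{app:eq:ri-conclusion}.

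\textbf{Main obstacle.} The delicate point is the $L^1$ factor in the singular case. It relies on $X$ satisfying the integrability axiom, which is implicit in calling $X$ a rearrangement-invariant function space; I would state this assumption explicitly. One also needs $\overline X$ and $\overline Y$ to have the lattice (monotonicity) property so that the pointwise bound can be transferred. Finally, in \eqref{eq:Af(s)2} the interval $(0,M)$ should be read as $(0,|\Omega|)$.
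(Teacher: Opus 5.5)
Your proposal is correct and follows the paper's proof essentially step by step: you transfer the pointwise bound of \cref{thm:main} to $\overline Y$ through rearrangement invariance, lattice monotonicity and the quasi-triangle inequality, and in the singular range you absorb the factor $\|f\|_{L^1(\Omega)}^{2-p}$ using \cref{app:lem:X-into-L1} together with \eqref{eq:Af(s)2}. Your clarifications are sensible points that the paper leaves implicit: making the local integrability axiom explicit, replacing $f$ by $|f|$, and reading $(0,M)$ as $(0,|\Omega|)$.
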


\begin{proof}
With the notation before, one has
\begin{equation*}
 A_f(s)=\int_0^s f^*(r)\,\mathrm{d}r,
 \qquad
 B_f(s)=\int_s^m f^*(r)r^{-\frac{1}{n'}}\,\mathrm{d}r.
\end{equation*}
Consequently, when $2\le p<n$, the first line of Theorem \ref{thm:main}
becomes directly
\[
 \big([H(\nabla u)]^*(s)\big)^{p-1}
 \le C\bigl(s^{-\frac{1}{n'}}A_f(s)+B_f(s)\bigr).
\]
Next, by the defining property of the representation space
$\overline Y(0,m)$,
\begin{equation*}
 \| H(\nabla u)^{p-1}\|_{Y(\Omega)}
 =\| (|H(\nabla u)|^*)^{p-1}\|_{\overline Y(0,m)}.
\end{equation*}
By \eqref{eq:Af(s)1}, \eqref{eq:Bf(s)1}, lattice monotonicity of the representation quasi-norm, and its quasi-triangle inequality,
\begin{align}
\begin{split}
 \| \big(H(\nabla u)(s)\big)^{p-1}\|_{Y(\Omega)}
 &=\|\big([H(\nabla u)]^*(s)\big)^{p-1}\|_{\overline Y(0,m)}\\
 &\leq C\left(
 \| s^{-\frac{1}{n'}}A_f(s)\|_{\overline Y(0,m)}
 +\| B_f(s)\|_{\overline Y(0,m)}
 \right)\\
 &\leq  C\|f^*\|_{\overline X(0,|\Omega|)}\\
 &= C\|f\|_{X(\Omega)}.
 \end{split}
\end{align}
When $1<p<2$, the second line of Theorem \ref{thm:main} reads
\begin{equation*}
\big([H(\nabla u)]^*(s)\big)^{p-1}
 \le C\left(s^{-\frac{1}{n'}}\| f\|^{2-p}_{L^1(\Omega)}A_f(s)^{p-1}
   +B_f(s)\right)
\end{equation*}
Using the lattice monotonicity of $\overline Y(0,M)$, and its quasi-triangle
inequality, we obtain
\begin{align}
\begin{split}
 \bigl\|H(\nabla u)^{p-1}\bigr\|_{Y(\Omega)}
 &=
 \left\|
 \bigl(H(\nabla u)^{p-1}\bigr)^*
 \right\|_{\overline Y(0,M)}
 \notag\\
 &\le
 C\| f\rVert_{L^1(\Omega)}^{2-p}
 \left\|
 s^{-\frac{1}{n'}}
 A_f(s)^{p-1}
 \right\|_{\overline Y(0,M)}\\
 &\quad+
 C\left\|
 B_f(s)
 \right\|_{\overline Y(0,M)}.
\end{split}
\end{align}
Applying \eqref{eq:Af(s)2}, \eqref{eq:Bf(s)2} and Lemma \ref{app:lem:X-into-L1} yields
\begin{align}\label{app:eq:singular-before-final-embedding}
\begin{split}
 \bigl\|H(\nabla u)^{p-1}\bigr\|_{Y(\Omega)}
 &\le
 C\| f\|_{L^1(\Omega)}^{2-p}
 \| f\|_{X(\Omega)}^{p-1}
 +
 C\| f\|_{X(\Omega)}\\
 &\leq C
 \| f\|_{X(\Omega)}^{2-p}
 \| f\|_{X(\Omega)}^{p-1} + C\| f\rVert_{X(\Omega)}
 \\
 &=
 C\| f\|_{X(\Omega)}.
 \end{split}
\end{align}
Hence, the proof is complete.
\end{proof}

\begin{table}[H]
\centering
\caption{Comparison of degenerate and
singular ranges.}
\label{tab:comparison-degenerate-singular}

\begingroup
\small
\setlength{\tabcolsep}{5pt}
\renewcommand{\arraystretch}{1.25}

\def\cellmath#1{%
    \par\smallskip
    {\centering
    \(\displaystyle #1\)\par}%
    \smallskip
}

\begin{tabular}{
    @{}
    >{\raggedright\arraybackslash
      \hyphenpenalty=10000\relax
      \exhyphenpenalty=10000\relax}
      p{0.18\linewidth}
    >{\raggedright\arraybackslash
      \hyphenpenalty=10000\relax
      \exhyphenpenalty=10000\relax}
      p{0.38\linewidth}
    >{\raggedright\arraybackslash
      \hyphenpenalty=10000\relax
      \exhyphenpenalty=10000\relax}
      p{0.38\linewidth}
    @{}
}
\toprule
\textbf{Assumption on the datum}
&
\textbf{Degenerate range \(2\leq p<n\)}
&
\textbf{Singular range \(1<p<2\)}
\\
\midrule
\addlinespace[1ex]

\(q=1\)
&
Weak endpoint:
\cellmath{
    G\in L^{n'(p-1),\infty}(\Omega).
}
Strong subcritical estimates:
\cellmath{
    \begin{gathered}
        G\in L^\sigma(\Omega),\\[-0.3ex]
        \text{for every }0<\sigma<n'(p-1).
    \end{gathered}
}
&
The same 
\\

\addlinespace[4ex]

\(1<q<n\)
&
Strong endpoint estimate:
\cellmath{
    G\in L^{ \frac{nq(p-1)}{n-q}}(\Omega)
}

&
Weak endpoint estimate:
\cellmath{
    G\in
    L^{\sigma_{p,q},\infty}(\Omega).
}
Strong subcritical estimates:
\cellmath{
    \begin{gathered}
        G\in L^\sigma(\Omega),\\[-0.3ex]
        \text{for every }
        0<\sigma<\sigma_{p,q} = \frac{p-1}{
        \frac{1}{n'}
        -
        \frac{p-1}{q'}}.
    \end{gathered}
}
\\

\addlinespace[4ex]

\(q=n\)
&
Strong endpoint estimate:
\cellmath{
    \begin{gathered}
        G\in L^\sigma(\Omega),\\[-0.3ex]
        \text{for every }0<\sigma<\infty.
    \end{gathered}
}

&
Weak endpoint estimate:
\cellmath{
    G\in
    L^{
        \frac{n'(p-1)}{2-p},
        \infty
    }(\Omega).
}
Strong subcritical estimates:
\cellmath{
    \begin{gathered}
        G\in L^\sigma(\Omega),\\[-0.3ex]
        \text{for every }
        0<\sigma<
        \frac{n'(p-1)}{2-p}.
    \end{gathered}
}
\\

\addlinespace[4ex]

\(q>n\)
&
The rearrangement estimate directly yields
\cellmath{
    G\in L^\infty(\Omega).
}
&
Can't get \(L^\infty\) estimate in general,
Unless 
\cellmath{
    \begin{gathered}
         G\in L^\infty(\Omega),\\[-0.3ex]
        \text{for every }
        \frac{p-1}{q'} \geq \frac{1}{n'}.
    \end{gathered}
}
\\

\addlinespace[0.8ex]
\bottomrule
\end{tabular}

\endgroup
\end{table}

\begin{theorem}\label{thm:anisotropic-lebesgue-estimates}
Let $\Omega$, $H$, and $u$ be as in Theorem~1.1, and assume that $2\leq p<n$. If $f\in L^q(\Omega)$, then the following assertions hold:
\begin{enumerate}
    \item[\textup{(i)}]
    If $q=1$, then
    \[
    \|H(\nabla u)\|_{L^{n'(p-1),\infty}(\Omega)}
    \leq
    C\|f\|_{L^1(\Omega)}^{\frac{1}{p-1}}.
\]
    moreover, for every
    \[
        0<\sigma<\frac{n(p-1)}{n-1},
    \]
    one has
    \[
        \|H(\nabla u)\|_{L^\sigma(\Omega)}
        \leq
        C\|f\|_{L^1(\Omega)}^{\frac{1}{p-1}}.
    \]

    \item[\textup{(ii)}]
    If $1<q<n$, then
    \[
        \|H(\nabla u)\|_
        {L^{\frac{nq(p-1)}{n-q}}(\Omega)}
        \leq
        C\|f\|_{L^q(\Omega)}^{\frac{1}{p-1}}.
    \]

    \item[\textup{(iii)}]
    If $q=n$, then, for every $0<\sigma<\infty$,
    \[
        \|H(\nabla u)\|_{L^\sigma(\Omega)}
        \leq
        C_\sigma\|f\|_{L^n(\Omega)}^{\frac{1}{p-1}}.
    \]

    \item[\textup{(iv)}]
    If $q>n$, then
    \[
        \|H(\nabla u)\|_{L^\infty(\Omega)}
        \leq
        C\|f\|_{L^q(\Omega)}^{\frac{1}{p-1}}.
    \]
\end{enumerate}
The constants are independent of $f$ and $u$. They may depend on the
structural and geometric parameters occurring in Theorem~1.1 and on the
displayed exponents.
\end{theorem}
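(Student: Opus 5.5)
The plan is to derive all four assertions from the degenerate line of \Cref{thm:main},
\[
 [H(\nabla u)]^*(s)^{p-1}\le C\bigl(s^{-\frac{1}{n'}}A_f(s)+B_f(s)\bigr),\qquad 0<s<m,
\]
by reducing each one to a one-dimensional Hardy inequality. Part (i) needs nothing new. The weak bound is exactly \Cref{thm:individual}, with $q=n(p-1)/(n-1)=n'(p-1)$. The strong bound for $\sigma<n'(p-1)$ then follows because $|\Omega|<\infty$ gives $L^{n'(p-1),\infty}(\Omega)\hookrightarrow L^\sigma(\Omega)$, with a constant depending on $m$ and $\sigma$.

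For (ii), with $1<q<n$, I set $\gamma=\frac{nq}{n-q}$ and $\theta=\gamma(p-1)\cdot\frac{1}{p-1}$. The goal is
\[
 \Bigl\|s^{-\frac{1}{n'}}A_f(s)+B_f(s)\Bigr\|_{L^{\gamma}(0,m)}\le C\|f^*\|_{L^q(0,m)}.
\]
Raising the pointwise bound to the power $\frac{1}{p-1}$ gives $\|H(\nabla u)\|_{L^{\gamma(p-1)}}^{p-1}\le C\|f\|_{L^q}$, which is the claim. For the first term I write $s^{-\frac1{n'}}A_f(s)=s^{1/n}f^{**}(s)$. The classical Hardy inequality gives $\|f^{**}\|_{L^q}\le q'\|f^*\|_{L^q}$, but the extra weight $s^{1/n}$ means I need the weighted version: the operator $f\mapsto s^{-1+1/n}\int_0^s f$ is bounded from $L^q$ to $L^{\gamma}$ exactly when $\frac1\gamma=\frac1q-\frac1n$. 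This is the Riesz-potential-type Hardy inequality, which I would verify either with the Muckenhoupt/Bradley criterion for Hardy operators between $L^q$ and $L^\gamma$, or by observing that $s^{1/n}f^{**}\le C\|f\|_{L^{n,\infty}}$-type interpolation. The dual operator $B_f(s)=\int_s^m f^*r^{-1/n'}dr$ is handled by the same criterion in its conjugate form, again with $\frac1\gamma=\frac1q-\frac1n$. Checking the Bradley supremum condition for both power weights is a routine computation, and this step carries the real content.

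For (iii), with $q=n$, both terms are controlled by $\|f\|_{L^n}$ times a logarithm. By H\"older, $s^{-1/n'}A_f(s)\le\|f\|_{L^n}$. Also $B_f(s)\le\|f\|_{L^n}\bigl(\int_s^m r^{-1}dr\bigr)^{1/n'}=\|f\|_{L^n}\log(m/s)^{1/n'}$. Hence $[H(\nabla u)]^*(s)\le C\|f\|_{L^n}^{1/(p-1)}(1+\log(m/s))^{\frac{1}{n'(p-1)}}$, and $\int_0^m(1+\log(m/s))^{a}ds<\infty$ for every $a$, which gives every $L^\sigma$ with constant $C_\sigma$.

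For (iv), with $q>n$, H\"older gives $s^{-1/n'}A_f(s)\le s^{\frac1n-\frac1q}\|f\|_{L^q}\le m^{\frac1n-\frac1q}\|f\|_{L^q}$. Likewise $B_f(s)\le\|f\|_{L^q}\|r^{-1/n'}\|_{L^{q'}(0,m)}$, and this is finite since $q'/n'<1$. Both terms are bounded uniformly in $s$. Letting $s\downarrow0$ then yields the $L^\infty$ bound. Equivalently, $L^q\hookrightarrow L^{n,1}$ together with \Cref{cor:infty norm result} gives the same conclusion.

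The main obstacle is the endpoint Hardy inequality in (ii). The outer exponent must be exactly $\frac{nq}{n-q}$ rather than arbitrary $\sigma$ below it. Simple H\"older arguments lose this sharp exponent and yield only the weak-type bound, so the weighted Hardy criterion, or alternatively O'Neil's inequality for the Hardy operators, is what I would rely on.
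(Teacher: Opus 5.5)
Your proposal is correct. For (ii) and (iv) it is the paper's argument. In (ii) the paper bounds $s^{-1/n'}A_f$ and $B_f$ in $L^{nq/(n-q)}(0,m)$ by the Bradley--Muckenhoupt criteria for the two Hardy operators. The checks reduce to $\gamma/n'>1\iff q>1$, $q'/n'>1\iff q<n$, and the balance $\frac1\gamma+\frac1{q'}-\frac1{n'}=0$. In (iv) the paper uses exactly your two H\"older bounds.

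The differences are in (i) and (iii).

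\emph{Part (i).} The paper does not cite \Cref{thm:individual}. It re-derives the weak bound from the rearrangement estimate: $A_f\le\|f\|_{L^1}$ and $f^*(r)\le\|f\|_{L^1}/r$ give $B_f(s)\le C\|f\|_{L^1}s^{-1/n'}$, and it then integrates $s^{-\sigma/(n'(p-1))}$ for the strong bound. Your citation is shorter and does not need \Cref{thm:main}. The paper's version shows that the rearrangement bound alone already contains the $L^1$ endpoint.

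\emph{Part (iii).} The paper picks $q_0<n$ with $\frac{nq_0(p-1)}{n-q_0}>\sigma$, uses $L^n(\Omega)\hookrightarrow L^{q_0}(\Omega)$, and applies (ii). You instead obtain the explicit profile $[H(\nabla u)]^*(s)\le C\|f\|_{L^n}^{1/(p-1)}(1+\log(m/s))^{1/(n'(p-1))}$. This gives a bit more: it already places $H(\nabla u)$ in $\exp L^{n'(p-1)}(\Omega)$, which is the $\alpha=0$ case of \Cref{rem:anisotropic-orlicz-zygmund}, and it makes $C_\sigma$ explicit. The paper's route is quicker once (ii) is available.

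Two cosmetic points. Your $\theta$ equals $\gamma$ and plays no role. The phrase ``bounded exactly when $\frac1\gamma=\frac1q-\frac1n$'' should be read as identifying the sharp exponent, since on $(0,m)$ every smaller target exponent is also admissible.
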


\begin{proof}
Set $m =|\Omega|$ and define
\[
    A_f(s)=\int_0^s f^*(r)\,dr,
    \qquad
    B_f(s)=\int_s^m f^*(r)r^{-\alpha}\,dr,
    \qquad 0<s<m.
\]
By the first line of the rearrangement estimate in Theorem~1.1,
\begin{equation}
\label{eq:anisotropic-basic-rearrangement}
    \big([H(\nabla u)]^*(s)\big)^{p-1}
    \leq
    C\left[
        s^{-\frac{1}{n'}}A_f(s)+B_f(s)
    \right],
    \qquad 0<s<m.
\end{equation}

\medskip
\noindent
\emph{Case \textup{(i)}: $q=1$}

Since $A_f(s)$ is only the integral of $f^*$ over $(0,s)$, whereas
$\|f\|_{L^1(\Omega)}$ is the integral over the whole interval $(0,m)$,
we have
\begin{equation}
\label{eq:Af-L1-bound}
    A_f(s)
    =
    \int_0^s f^*(r)\,dr
    \leq
    \int_0^m f^*(r)\,dr
    =
    \|f\|_{L^1(\Omega)}.
\end{equation}
Since $f^*$ is nonincreasing, for every $r\in(0,m)$, we have
\begin{align}
    f^*(r)
    \leq
    \frac{1}{r}\int_0^r f^*(t)\,dt
    \leq
    \frac{\|f\|_{L^1(\Omega)}}{r}.
\end{align}
It follows that
\begin{align*}
    B_f(s)
    &=
    \int_s^m f^*(r)r^{-\frac{1}{n'}}\,dr\\
    &\leq
    \|f\|_{L^1(\Omega)}
    \int_s^m r^{-1-\frac{1}{n'}}\,dr\\
    &=
    \frac{\|f\|_{L^1(\Omega)}}{\frac{1}{n'}}
    \left(s^{-\frac{1}{n'}}-m^{-\frac{1}{n'}}\right)\\
    &\leq
    C \|f\|_{L^1(\Omega)}s^{-\frac{1}{n'}}.
\end{align*}
Combining this estimate with \eqref{eq:Af-L1-bound} and
\eqref{eq:anisotropic-basic-rearrangement}, we obtain
\begin{equation}
\label{eq:l-one-pointwise-anisotropic}
    \big([H(\nabla u)]^*(s)\big)^{p-1}
    \leq
    C\|f\|_{L^1(\Omega)}s^{-\frac{1}{n'}},
    \qquad 0<s<m.
\end{equation}
Since the function $t\mapsto t^{p-1}$ is strictly
increasing on $[0,\infty)$, decreasing rearrangement commutes with
this power. More precisely,
\begin{equation}
\label{eq:rearrangement-power-identity}
    \big([H(\nabla u)]^{p-1}\big)^*(s)
    =
    \big([H(\nabla u)]^*(s)\big)^{p-1},
    \qquad 0<s<m.
\end{equation}
In particular, recalling the definition of Lorentz norm and \eqref{eq:l-one-pointwise-anisotropic}, we get
\begin{align}\label{eq:H-inequality}
    \|[H(\nabla u)]^{p-1}\|_{L^{n',\infty}(\Omega)}
    =
    \sup_{0<s<m}s^{\frac{1}{n'}}\big([H(\nabla u)]^{p-1}\big)^*(s)
    \leq
    C\|f\|_{L^1(\Omega)}.
\end{align}

Moreover,
\begin{align*}
    \|[H(\nabla u)]^{p-1}\|_{L^{n',\infty}(\Omega)}
    &=
    \sup_{0<s<m}
    s^{\frac{1}{n'}}
    \big([H(\nabla u)]^{p-1}\big)^*(s)\\
    &=
    \sup_{0<s<m}
    s^{\frac{1}{n'}}
   \big([H(\nabla u)]^*(s)\big)^{p-1}\\
    &=
    \sup_{0<s<m}
    \left(
        s^{1/[n'(p-1)]}[H(\nabla u)]^*(s)
    \right)^{p-1}\\
    &=
    \|H(\nabla u)\|_{L^{n'(p-1),\infty}(\Omega)}^{p-1}.
\end{align*}
Consequently, by \eqref{eq:H-inequality} we have
\[
    \|H(\nabla u)\|_{L^{n'(p-1),\infty}(\Omega)}^{p-1}
    \leq
    C\|f\|_{L^1(\Omega)}.
\]
Taking the power $\frac{1}{p-1}$ on both sides and renaming the resulting
constant, we obtain
\[
    \|H(\nabla u)\|_{L^{n'(p-1),\infty}(\Omega)}
    \leq
    C\|f\|_{L^1(\Omega)}^{\frac{1}{p-1}}.
\]

Let now
\[
    0<\sigma<n'(p-1)
    =
    \frac{n(p-1)}{n-1}
\]
and set
\[
    \rho=\frac{\sigma}{p-1}.
\]
Then $0<\rho<n'$. By \eqref{eq:l-one-pointwise-anisotropic},
\begin{align*}
\begin{split}
    \|[H(\nabla u)]^{p-1}\|_{L^\rho(\Omega)}^\rho
    &=
    \int_0^m \big([H(\nabla u)]^{p-1}\big)^*(s)\bigr)^\rho\,ds\\
    &\leq
    C\|f\|_{L^1(\Omega)}^\rho
    \int_0^m s^{-\rho/n'}\,ds\\
    &=
    \frac{Cm^{1-\rho/n'}}{1-\rho/n'}
    \|f\|_{L^1(\Omega)}^\rho.
    \end{split}
\end{align*}
Therefore,
\[
    \|[H(\nabla u)]^{p-1}\|_{L^\rho(\Omega)}
    \leq
    C\|f\|_{L^1(\Omega)}.
\]
Since $\rho(p-1)=\sigma$, 
\begin{align*}
    \|[H(\nabla u)]^{p-1}\|_{L^\rho(\Omega)}
    &=
    \left(
        \int_\Omega
        [H(\nabla u)]^{\rho(p-1)}\,dx
    \right)^{1/\rho}\\
    &=
    \left(
        \int_\Omega
        [H(\nabla u)]^\sigma\,dx
    \right)^{1/\rho}\\
    &=
    \left(
        \int_\Omega
        [H(\nabla u)]^\sigma\,dx
    \right)^{(p-1)/\sigma}\\
    &=
    \|H(\nabla u)\|_{L^\sigma(\Omega)}^{p-1}.
\end{align*}
Consequently,
\[
    \|H(\nabla u)\|_{L^\sigma(\Omega)}^{p-1}
    \leq
    C\|f\|_{L^1(\Omega)}.
\]
Taking the power $\frac{1}{p-1}$ on both sides and renaming the resulting
constant, we conclude that
\[
    \|H(\nabla u)\|_{L^\sigma(\Omega)}
    \leq
    C\|f\|_{L^1(\Omega)}^{\frac{1}{p-1}}.
\]

\medskip
\noindent
\emph{Case \textup{(ii)}: $1<q<n$}

By H\"older's inequality,
\begin{align}\label{eq:first-hardy-critical-decay}
\begin{split}
    A_f(s)
    &=
    s^{-\frac{1}{n'}}\int_0^sf^*(r)\,dr
    \notag\\
    &\leq
    s^{-\frac{1}{n'}}
    \left(
        \int_0^s|f^*(r)|^q\,dr
    \right)^{\frac1q}
    s^{\frac{1}{q'}}
    \notag\\
    &\leq
    s^{\frac{1}{q'}-\frac{1}{n'}}
    \|f\|_{L^q(0,M)}.
\end{split}
\end{align}
The exponent of $s$ satisfies
\begin{align}
    \frac{1}{q'}-\frac{1}{n'}
    &=
    \left(1-\frac{1}{q}\right)
    -
    \left(1-\frac{1}{n}\right)
    \notag\\
    &=
    -\left(
        \frac{1}{q}-\frac{1}{n}
    \right).
\end{align}
Since $q<n$, the number
\[
    \theta_q
    :=
    \frac{1}{q}-\frac{1}{n}>0.
\]
 Hence
\eqref{eq:first-hardy-critical-decay} takes the form
\[
    A_f(s)
    \leq
    s^{-\theta_q}
    \|f\|_{L^q(0,m)}.
\]

The same critical power occurs for the second Hardy operator. Indeed,
since $q<n$, one has $\frac{q'}{n'}>1$, and H\"older's inequality gives
\begin{align}\label{eq:second-hardy-critical-decay}
\begin{split}
    B_f(s)
    &=
    \int_s^m f(r)r^{-\frac{1}{n'}}\,dr\\
    &\leq
    \|f\|_{L^q(0,m)}
    \left(
        \int_s^m r^{-\frac{q'}{n'}}\,dr
    \right)^{\frac{1}{q'}}\\
    &\leq
    C\|f\|_{L^q(0,m)}
    s^{\frac{1}{q'}-\frac{1}{n'}}\\
    &=
    C\|f\|_{L^q(0,m)}
    s^{-\theta_q}.
\end{split}
\end{align}

Thus both Hardy operators have the critical decay
$s^{-\theta_q}$. Since the critical power associated with weak
$L^r(0,m)$ is $s^{-\frac1r}$, the natural target exponent $r$ is
determined by
\[
    \frac{1}{r}
    =
    \theta_q
    =
    \frac{1}{q}-\frac{1}{n}.
\]
Solving this identity gives
\[
    r=\frac{nq}{n-q}.
\]
From now on, we denote this naturally determined exponent by
\begin{equation}
\label{eq:q-star-naturally-derived}
    q^\star
    :=
    \frac{nq}{n-q},
    \qquad
    \frac{1}{q^\star}
    =
    \frac{1}{q}-\frac{1}{n}.
\end{equation}
The strong $L^{q^\star}$ estimates follow from the classical weighted
Hardy inequalities:
\begin{equation}
\label{eq:two-hardy-strong-estimates}
    \|A_f\|_{L^{q^\star}(0,M)}
    +
    \|B_f\|_{L^{q^\star}(0,M)}
    \leq
    C\|f\|_{L^q(0,M)}.
\end{equation}
For completeness, the criteria for the two inequalities in
\eqref{eq:two-hardy-strong-estimates} are
\begin{equation}
\label{eq:first-hardy-criterion-natural}
    \sup_{0<t<M}
    \left(
        \int_t^M s^{-\frac{q^\star}{n'}}\,ds
    \right)^{\frac{1}{q^\star}}
    t^{\frac{1}{q'}}
    <\infty
\end{equation}
and
\begin{equation}
\label{eq:second-hardy-criterion-natural}
    \sup_{0<t<M}
    t^{\frac{1}{q^\star}}
    \left(
        \int_t^M r^{-\frac{q'}{n'}}\,dr
    \right)^{\frac{1}{q'}}
    <\infty.
\end{equation}
The first condition is satisfied because $q>1$ implies
\[
    \frac{q^\star}{n'}>1,
\]
whereas the second is satisfied because $q<n$ implies
\[
    \frac{q'}{n'}>1.
\]
Moreover, in both cases the remaining power of $t$ is zero, since
\begin{equation}
\label{eq:hardy-balance-natural}
    \frac{1}{q^\star}
    +
    \frac{1}{q'}
    -
    \frac{1}{n'}
    =0.
\end{equation}
This proves \eqref{eq:two-hardy-strong-estimates}.

Hence we obtain that
\begin{align}\label{eq:w-q-star-natural}
\begin{split}
    \|\big([H(\nabla u)]^{p-1}\big)\|_{L^{q^\star}(\Omega)}
    &=
    \|\big([H(\nabla u)]^{p-1}\big)^*(s)\|_{L^{q^\star}(0,M)}\\
    &\leq
    C\left(
        \|A_f\|_{L^{q^\star}(0,M)}
        +
        \|B_f\|_{L^{q^\star}(0,M)}
    \right)\\
    &\leq
    C\|f\|_{L^q(\Omega)}.
    \end{split}
\end{align}

Using \eqref{eq:rearrangement-power-identity}, a direct calculation
gives
\begin{align}
\begin{split}
    \|[H(\nabla u)]^{p-1}\|_{L^{q^\star}(\Omega)}
    &=
    \left(
        \int_\Omega
        [H(\nabla u)]^{q^\star(p-1)}\,dx
    \right)^{\frac{1}{q^\star}}\\
    &=
    \|H(\nabla u)\|_
    {L^{q^\star(p-1)}(\Omega)}^{p-1}.
\end{split}
\end{align}
Combining \eqref{eq:w-q-star-natural}, we conclude that
\begin{equation}
\label{eq:intermediate-final-natural}
    \|H(\nabla u)\|_
    {L^{q^\star(p-1)}(\Omega)}
    \leq
    C\|f\|_{L^q(\Omega)}^{\frac{1}{p-1}}.
\end{equation}
Finally, by \eqref{eq:q-star-naturally-derived},
\[
    q^\star(p-1)
    =
    \frac{nq(p-1)}{n-q}.
\]
Therefore,
\[
    \|H(\nabla u)\|_
    {L^{\frac{nq(p-1)}{n-q}}(\Omega)}
    \leq
    C\|f\|_{L^q(\Omega)}^{\frac{1}{p-1}}.
\]
This proves assertion \textup{(ii)}.

\medskip
\noindent
\emph{Case \textup{(iii)}: $q=n$}

Fix $0<\sigma<\infty$. Choose
$q_0\in(1,n)$ sufficiently close to $n$ so that
\[
    \tau
    :=
    \frac{nq_0(p-1)}{n-q_0}
    >
    \sigma.
\]
Since $|\Omega|=M<\infty$,
\[
    \|f\|_{L^{q_0}(\Omega)}
    \leq
    M^{\frac{1}{q_0}-\frac1n}
    \|f\|_{L^n(\Omega)}.
\]
Applying assertion \textup{(ii)} with $q=q_0$, we obtain
\[
    \|H(\nabla u)\|_{L^\tau(\Omega)}
    \leq
    C\|f\|_{L^n(\Omega)}^{\frac{1}{p-1}}.
\]
Since $\tau>\sigma$ and $\Omega$ has finite measure,
\[
    \|H(\nabla u)\|_{L^\sigma(\Omega)}
    \leq
    M^{\frac{1}{\sigma}-\frac{1}{\tau}}
    \|H(\nabla u)\|_{L^\tau(\Omega)}.
\]
It follows that
\[
    \|H(\nabla u)\|_{L^\sigma(\Omega)}
    \leq
    C_\sigma
    \|f\|_{L^n(\Omega)}^{\frac{1}{p-1}}.
\]
This proves assertion \textup{(iii)}.

\medskip
\noindent
\emph{Case \textup{(iv)}: $q>n$}

 By H\"older's inequality, we get the same result of first decay as \eqref{eq:first-hardy-critical-decay},
\begin{align}
\begin{split}
    A_f(s) &\leq 
    s^{\frac1n-\frac1q}
    \|f\|_{L^q(\Omega)}\\
    &\leq  M^{\frac1n-\frac1q}
    \|f\|_{L^q(\Omega)}
\end{split}
\end{align}
Moreover, we still get the same result of second decay as \eqref{eq:second-hardy-critical-decay}
\begin{align*}
    B_f(s)
    &=
    \int_s^M f^*(r)r^{-\frac{1}{n'}}\,dr\\
    &\leq
    \|f^*\|_{L^q(0,M)}
    \left(
        \int_0^M r^{-\frac{q'}{n'}}\,dr
    \right)^{1/q'}\\
    &\leq C\|f\|_{L^q(0,M)},
\end{align*}
where $q>n$, and $\frac{q'}{n'}<1$, one has the last integral is finite. Hence, we conclude that
\[
    \|H(\nabla u)\|_{L^\infty(\Omega)}
    \leq
    C\|f\|_{L^q(\Omega)}^{\frac{1}{p-1}}.
\]
This proves assertion \textup{(iv)} and completes the proof.
\end{proof}

\begin{theorem}
\label{thm:singular-gradient-estimates}

Let \(\Omega\), \(H\), \(f\), and \(u\) be as in Theorem~1.1, and
assume that $1<p<2$.
If \(f\in L^q(\Omega)\), then the following assertions hold.

\begin{enumerate}
\renewcommand{\labelenumi}{\textnormal{(\roman{enumi})}}

\item
If \(q=1\), then
\[
    \left\|H(\nabla u)\right\|_{
        L^{n'(p-1),\infty}(\Omega)
    }
    \leq
    C
    \|f\|_{L^1(\Omega)}^{\frac{1}{p-1}}.
\]
Moreover, for every
\[
    0<\sigma<n'(p-1),
\]
one has
\[
    \left\|H(\nabla u)\right\|_{L^\sigma(\Omega)}
    \leq
    C_\sigma
    \|f\|_{L^1(\Omega)}^{\frac{1}{p-1}}.
\]

\item
If \(1<q<n\), let
\[
    q'=\frac{q}{q-1}
\]
and define
\[
    \delta_{p,q}
    :=
    \frac{1}{n'}
    -
    \frac{p-1}{q'}.
\]
Since \(1<p<2\) and \(1<q<n\), one has
\[
    \delta_{p,q}>0.
\]
Set
\[
    \sigma_{p,q}
    :=
    \frac{p-1}{\delta_{p,q}}
    =
    \frac{p-1}{
        \displaystyle
        \frac{1}{n'}
        -
        \frac{p-1}{q'}
    }.
\]
Then
\[
    \left\|H(\nabla u)\right\|_{
        L^{\sigma_{p,q},\infty}(\Omega)
    }
    \leq
    C
    \|f\|_{L^q(\Omega)}^{\frac{1}{p-1}}.
\]
Consequently, for every
\[
    0<\sigma<\sigma_{p,q},
\]
one has
\[
    \left\|H(\nabla u)\right\|_{L^\sigma(\Omega)}
    \leq
    C_\sigma
    \|f\|_{L^q(\Omega)}^{\frac{1}{p-1}}.
\]

\item
If \(q=n\), define
\[
    \sigma_{p,n}
    :=
    \frac{n'(p-1)}{2-p}.
\]
Then
\begin{equation}
\label{eq:singular-weak-intermediate}
    \|H(\nabla u)\|_
    {L^{\sigma_{p,q},\infty}(\Omega)}
    \leq
    C\|f\|_{L^q(\Omega)}^{\frac{1}{p-1}}.
\end{equation}
Consequently, for every
\[
    0<\sigma<\sigma_{p,n}
    =
    \frac{n'(p-1)}{2-p},
\]
one has
\begin{equation}
\label{eq:singular-strong-intermediate}
    \|H(\nabla u)\|_{L^\sigma(\Omega)}
    \leq
    C_\sigma
    \|f\|_{L^q(\Omega)}^{\frac{1}{p-1}}.
\end{equation}

\item
If \(q>n\), and one additionally invokes
the global Lorentz endpoint estimate
\[
    \left\|H(\nabla u)\right\|_{L^\infty(\Omega)}
    \leq
    C
    \|f\|_{L^{n,1}(\Omega)}^{\frac{1}{p-1}},
\]
then the embedding
\[
    L^q(\Omega)\hookrightarrow L^{n,1}(\Omega),
    \qquad q>n,
\]
gives
\[
    \left\|H(\nabla u)\right\|_{L^\infty(\Omega)}
    \leq
    C
    \|f\|_{L^q(\Omega)}^{\frac{1}{p-1}}.
\]

\end{enumerate}

The constants are independent of \(f\) and \(u\). They may depend on
\(n\), \(p\), \(q\), \(\sigma\), \(|\Omega|\), and on the structural
and geometric parameters occurring in Theorem~1.1.
\end{theorem}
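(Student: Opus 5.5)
The four assertions follow from two earlier results: the weak endpoint \Cref{thm:individual}, and the singular line of \eqref{eq:Rp} in \Cref{thm:main}. In each case $1<q\le n$, the plan is to bound $\cR_p f(s)$ by a single power $C\|f\|_{L^q(\Omega)}s^{-\delta}$. Inverting the power $p-1$ then gives the weak Lorentz bound, and the strong $L^\sigma$ bounds for smaller $\sigma$ follow from $L^{\sigma_0,\infty}(\Omega)\hookrightarrow L^\sigma(\Omega)$ for $\sigma<\sigma_0$ on the finite-measure set $\Omega$. Throughout we use $N=\|f\|_{L^1(\Omega)}\le m^{1/q'}\|f\|_{L^q(\Omega)}$ and the identity $([H(\nabla u)]^{p-1})^*=([H(\nabla u)]^*)^{p-1}$. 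Since \eqref{eq:Rp} is stated for $n\ge3$ when $1<p<2$, cases (ii) and (iii) are for $n\ge 3$. Case (i) does not need this restriction, and case (iv) relies only on the cited endpoint estimate.

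\emph{Case (i).} Since $q=n'(p-1)$ in \Cref{thm:individual}, the weak bound is exactly \eqref{eq:individual}. For $0<\sigma<n'(p-1)$ we integrate:
\[
\int_0^m [H(\nabla u)]^*(s)^\sigma\,\dd s\le C N^{\frac{\sigma}{p-1}}\int_0^m s^{-\frac{\sigma}{n'(p-1)}}\,\dd s<\infty .
\]

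\emph{Case (ii).} By H\"older, $A_f(s)\le s^{1/q'}\|f\|_{L^q}$. Hence the first term of $\cR_pf(s)$ is at most
\[
C\|f\|_{L^q}^{2-p}\|f\|_{L^q}^{p-1}s^{-\frac1{n'}+\frac{p-1}{q'}}=C\|f\|_{L^q}s^{-\delta_{p,q}}.
\]
Positivity of $\delta_{p,q}$ follows from $\frac{p-1}{q'}<\frac1{q'}<\frac1{n'}$, since $q<n$. For the second term, H\"older on $(s,m)$ gives, as in the proof of \Cref{thm:anisotropic-lebesgue-estimates}(ii),
\[
B_f(s)\le C\|f\|_{L^q}s^{\frac1{q'}-\frac1{n'}}.
\]
Since $\frac1{n'}-\frac1{q'}<\delta_{p,q}$ and $s<m$, this is bounded by $C m^{\delta_{p,q}-\frac1{n'}+\frac1{q'}}\|f\|_{L^q}s^{-\delta_{p,q}}$. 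Therefore $[H(\nabla u)]^*(s)\le C\|f\|_{L^q}^{1/(p-1)}s^{-\delta_{p,q}/(p-1)}=C\|f\|_{L^q}^{1/(p-1)}s^{-1/\sigma_{p,q}}$, which is the weak estimate.

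\emph{Case (iii).} Now $A_f(s)\le s^{1/n'}\|f\|_{L^n}$, so the first term is at most $C\|f\|_{L^n}s^{-\frac{2-p}{n'}}$. The main obstacle is that the second term is only logarithmic: H\"older gives
\[
B_f(s)\le\|f\|_{L^n}\Bigl(\log\frac ms\Bigr)^{1/n'}.
\]
This is handled by the elementary bound $(\log\frac ms)^{1/n'}\le C_\varepsilon (m/s)^{\varepsilon}$ with $\varepsilon=\frac{2-p}{n'}>0$. Since $p<2$, the logarithm is then absorbed into $C\|f\|_{L^n}s^{-\frac{2-p}{n'}}$, and we get $[H(\nabla u)]^*(s)\le C\|f\|_{L^n}^{1/(p-1)}s^{-1/\sigma_{p,n}}$ with $\sigma_{p,n}=\frac{n'(p-1)}{2-p}$.

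\emph{Case (iv).} We do not use \eqref{eq:Rp}. Instead, \Cref{cor:infty norm result} is combined with the embedding
\[
\|f\|_{L^{n,1}}\le C\int_0^m r^{\frac1n-\frac1q}r^{\frac1q-1}f^*(r)\,\dd r\le C\|f\|_{L^q},
\]
which holds by H\"older since $q'\,(\frac1n-1)>-1$ when $q>n$.
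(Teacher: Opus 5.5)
Your proposal is correct. Case (ii) is the paper's argument essentially verbatim: H\"older for $A_f$, the bound $\|f\|_{L^1}\le m^{1/q'}\|f\|_{L^q}$, and absorption of $B_f(s)\le C\|f\|_{L^q}s^{-(1/q-1/n)}$ into $s^{-\delta_{p,q}}$ using the slack $(2-p)/q'>0$. The other three cases differ from the paper as follows.

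\begin{itemize}
\item \emph{Case (i).} The paper bounds both terms of the singular line of $\cR_pf$ by $C\|f\|_{L^1}s^{-1/n'}$. You instead quote \Cref{thm:individual} directly. Your route has the small advantage of not inheriting the restriction $n\ge 3$ imposed on the singular case of \Cref{thm:main}.

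\item \emph{Case (iii).} The paper just says that the case-(ii) weak bound, evaluated at $q=n$, gives the estimate at $\sigma_{p,n}$. But the case-(ii) estimate for $B_f$ explicitly used $q'/n'>1$. At $q=n$ it only yields $B_f(s)\le\|f\|_{L^n}(\log\frac ms)^{1/n'}$, and $B_f$ genuinely need not be bounded, since $L^n\not\hookrightarrow L^{n,1}$. Your absorption of the logarithm via the slack $\delta_{p,n}=(2-p)/n'>0$ is exactly the step needed to make the weak endpoint at $\sigma_{p,n}$ rigorous. The alternative hinted at by the paper, embedding $L^n$ into $L^{q_0}$ for $q_0<n$, would only give the strong bounds for $\sigma<\sigma_{p,n}$, not the weak endpoint.

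\item \emph{Case (iv).} The paper's proof only explains why the rearrangement bound alone cannot give $L^\infty$ when $\delta_{p,q}>0$; it leaves the endpoint-plus-embedding argument to the statement itself. Your H\"older verification of $L^q\hookrightarrow L^{n,1}$ for $q>n$ supplies that missing step.
\end{itemize}
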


\begin{proof}
Set
\[
    M=|\Omega|,
\]
Define
\[
    A_f(s)
    =
    \int_0^s f^*(r)\,dr
\]
and
\[
    B_f(s)
    =
    \int_s^M f^*(r)r^{-\frac{1}{n'}}\,dr.
\]

\medskip
\noindent
\emph{Case \textup{(i)}: $q=1$}

The only difference from the proof in the degenerate range is the
first term
\[
    s^{-\frac{1}{n'}}\|f\|_{L^1(\Omega)}^{2-p}A_f(s)^{p-1}.
\]
Since $p-1>0$, it follows from
\eqref{eq:Af-L1-bound} that
\[
    A_f(s)^{p-1}\leq \|f\|_{L^1(\Omega)}^{p-1}.
\]
Consequently,
\begin{align}
\begin{split}
    s^{-\frac{1}{n'}}\|f\|_{L^1(\Omega)}^{2-p}A_f(s)^{p-1}
    &\leq
    s^{-\frac{1}{n'}}\|f\|_{L^1(\Omega)}^{2-p}\|f\|_{L^1(\Omega)}^{p-1}\\
    &=
  s^{-\frac{1}{n'}} \|f\|_{L^1(\Omega)}.
\end{split}
\end{align}

The second term $B_f$ is identical to the one appearing in the degenerate range.  Hence we omit its repetition. This proves the assertion for $q=1$ in
the singular range has the same result as degenerate range.

\medskip
\noindent
\emph{Case \textup{(ii)}: $1<q<n$}

By the singular line of the rearrangement estimate in Theorem~1.1,
\begin{equation}
\label{eq:singular-main-intermediate}
    \big([H(\nabla u)]^*(s)\big)^{p-1}
    \leq
    C\left[
        s^{-\frac{1}{n'}}
        \|f\|_{L^1(\Omega)}^{2-p}
        A_f(s)^{p-1}
        +
        B_f(s)
    \right],
    \qquad 0<s<M.
\end{equation}

We estimate the two terms on the right-hand side separately. For the first term of the right side, by H\"older's inequality,
\begin{align}
    A_f(s)
    &=
    \int_0^s f^*(r)\,dr
    \notag\\
    &\leq
    \left(
        \int_0^s [f^*(r)]^q\,dr
    \right)^{\frac1q}
    \left(
        \int_0^s1^{q'}\,dr
    \right)^{\frac{1}{q'}}
    \notag\\
    &\leq
    s^{\frac{1}{q'}}
    \|f^*\|_{L^q(0,M)}
    \notag\\
    &=
    s^{\frac{1}{q'}}
    \|f\|_{L^q(\Omega)}.
\label{eq:singular-Af-q-intermediate}
\end{align}
Moreover, since $|\Omega|=M<\infty$, the finite-measure embedding
$L^q(\Omega)\hookrightarrow L^1(\Omega)$ gives
\begin{equation}
\label{eq:singular-Lq-L1-intermediate}
    \|f\|_{L^1(\Omega)}
    \leq
    M^{\frac{1}{q'}}
    \|f\|_{L^q(\Omega)}.
\end{equation}
Combining
\eqref{eq:singular-Af-q-intermediate} and
\eqref{eq:singular-Lq-L1-intermediate}, we obtain
\begin{align}
    &s^{-\frac{1}{n'}}
    \|f\|_{L^1(\Omega)}^{2-p}
    A_f(s)^{p-1}
    \notag\\
    &\quad\leq
    s^{-\frac{1}{n'}}
    \left(
        M^{\frac{1}{q'}}\|f\|_{L^q(\Omega)}
    \right)^{2-p}
    \left(
        s^{\frac{1}{q'}}\|f\|_{L^q(\Omega)}
    \right)^{p-1}
    \notag\\
    &\quad=
    M^{\frac{2-p}{q'}}
    s^{-\frac{1}{n'}+\frac{p-1}{q'}}
    \|f\|_{L^q(\Omega)}^{(2-p)+(p-1)}
    \notag\\
    &\quad=
    M^{\frac{2-p}{q'}}
    \|f\|_{L^q(\Omega)}
    s^{-\delta_{p,q}},
\label{eq:singular-head-q-intermediate}
\end{align}
and the definition
\[
    \delta_{p,q}
    =
    \frac{1}{n'}
    -
    \frac{p-1}{q'}.
\]
Thus
\begin{equation}
\label{eq:singular-head-final-intermediate}
    s^{-\frac{1}{n'}}
    \|f\|_{L^1(\Omega)}^{2-p}
    A_f(s)^{p-1}
    \leq
    C\|f\|_{L^q(\Omega)}
    s^{-\delta_{p,q}}.
\end{equation}

For the second term of right side, again by H\"older's inequality,
\begin{align}
    B_f(s)
    &=
    \int_s^M f^*(r)r^{-\frac{1}{n'}}\,dr
    \notag\\
    &\leq
    \left(
        \int_s^M [f^*(r)]^q\,dr
    \right)^{\frac1q}
    \left(
        \int_s^M r^{-\frac{q'}{n'}}\,dr
    \right)^{\frac{1}{q'}}
    \notag\\
    &\leq
    \|f\|_{L^q(\Omega)}
    \left(
        \int_s^M r^{-\frac{q'}{n'}}\,dr
    \right)^{\frac{1}{q'}}.
\label{eq:singular-tail-holder-intermediate}
\end{align}
Since $q<n$, one has
\[
    \frac{q'}{n'}>1.
\]
Therefore,
\begin{align*}
    \int_s^M r^{-\frac{q'}{n'}}\,dr
    &=
    \frac{
        s^{1-\frac{q'}{n'}}-M^{1-\frac{q'}{n'}}
    }{
        \frac{q'}{n'}-1
    }\\
    &\leq
    C s^{1-\frac{q'}{n'}}.
\end{align*}
Substituting this estimate into
\eqref{eq:singular-tail-holder-intermediate}, we obtain
\begin{align}
    B_f(s)
    &\leq
    C\|f\|_{L^q(\Omega)}
    s^{\frac{1}{q'}-\frac{1}{n'}}
    \notag\\
    &=
    C\|f\|_{L^q(\Omega)}
    s^{-\eta_q},
\label{eq:singular-tail-eta-intermediate}
\end{align}
where
\begin{equation}
\label{eq:eta-q-definition}
    \eta_q
    :=
    \frac{1}{n'}
    -
    \frac{1}{q'}
    =
    \frac{1}{q}
    -
    \frac{1}{n}.
\end{equation}
We now compare $\eta_q$ with $\delta_{p,q}$. A direct calculation gives
\begin{align}
    \delta_{p,q}-\eta_q
    &=
    \left(
        \frac{1}{n'}
        -
        \frac{p-1}{q'}
    \right)
    -
    \left(
        \frac{1}{n'}
        -
        \frac{1}{q'}
    \right)
    \notag\\
    &=
    \frac{2-p}{q'}
    \notag\\
    &=
    (2-p)
    \left(
        1-\frac{1}{q}
    \right)>0.
\label{eq:delta-eta-comparison}
\end{align}
Consequently,
\[
    0<\eta_q<\delta_{p,q}.
\]
Since $0<s<M$, it follows that
\begin{align*}
    s^{-\eta_q}
    &=
    s^{\delta_{p,q}-\eta_q}\cdot
    s^{-\delta_{p,q}}\\
    &\leq
    M^{\delta_{p,q}-\eta_q}
    s^{-\delta_{p,q}}.
\end{align*}
Thus \eqref{eq:singular-tail-eta-intermediate} yields
\begin{equation}
\label{eq:singular-tail-final-intermediate}
    B_f(s)
    \leq
    C\|f\|_{L^q(\Omega)}
    s^{-\delta_{p,q}}.
\end{equation}

Combining
\eqref{eq:singular-main-intermediate},
\eqref{eq:singular-head-final-intermediate}, and
\eqref{eq:singular-tail-final-intermediate}, we obtain
\begin{equation}\label{eq:singular-pointwise-before-root} 
     \big([H(\nabla u)]^*(s)\big)^{p-1}
    \leq
    C\|f\|_{L^q(\Omega)}
    s^{-\delta_{p,q}},
    \qquad 0<s<M.
\end{equation}
Since $p-1>0$, we may take the power $\frac{1}{p-1}$ on both sides of \eqref{eq:singular-pointwise-before-root} and recall that
\[
    \sigma_{p,q}
    =
    \frac{p-1}{\delta_{p,q}},
\]
we therefore obtain
\[
    s^{1/\sigma_{p,q}}[H(\nabla u)]^*(s)
    \leq
    C\|f\|_{L^q(\Omega)}^{\frac{1}{p-1}},
    \qquad 0<s<M.
\]
Taking the supremum over $s\in(0,M)$ and using the definition of the Lorentz norm give
\begin{align}\label{eq:weak-estamite-iii}
    \|H(\nabla u)\|_{L^{\sigma_{p,q},\infty}(\Omega)}
    =
    \sup_{0<s<M}
    s^{\frac{1}{\sigma_{p,q}}}G^*(s)  \leq
    C\|f\|_{L^q(\Omega)}^{\frac{1}{p-1}}.
\end{align}
This proves \eqref{eq:singular-weak-intermediate}.

Let
\[
    0<\sigma<\sigma_{p,q}.
\]
Using \eqref{eq:singular-pointwise-before-root}, we have
\begin{align*}
    \|H(\nabla u)\|_{L^\sigma(\Omega)}^\sigma
    &=
    \int_0^M
    \bigl([H(\nabla u)]^*(s)\bigr)^\sigma\,ds\\
    &\leq
    C\|f\|_{L^q(\Omega)}^{\frac{\sigma}{p-1}}
    \int_0^M
    s^{-\sigma/\sigma_{p,q}}\,ds\\
    &=
    \frac{
        CM^{1-\frac{\sigma}{\sigma_{p,q}}}
    }{
        1-\frac{\sigma}{\sigma_{p,q}}
    }
    \|f\|_{L^q(\Omega)}^{\frac{\sigma}{p-1}}.
\end{align*}
Finally, we conclude that
\begin{align}\label{eq:strong-estimeta-iii}
    \|H(\nabla u)\|_{L^\sigma(\Omega)}
    \leq
    C_\sigma
    \|f\|_{L^q(\Omega)}^{\frac{1}{p-1}}.
\end{align}
This proves \eqref{eq:singular-strong-intermediate}.

\medskip
\noindent
\emph{Case \textup{(iii)}: $q=n$}

The proof in this section is similar to the proof of part of Theorem \ref{thm:anisotropic-lebesgue-estimates} (iii). By
\eqref{eq:weak-estamite-iii} and definition of $\sigma_{p,q}$, we conclude that
\begin{equation}
\label{eq:singular-qn-weak}
    \|H(\nabla u)\|_
    {L^{\sigma_{p,n},\infty}(\Omega)}
    \leq
    C\|f\|_{L^n(\Omega)}^{\frac{1}{p-1}},
\end{equation}
where
\[
    \sigma_{p,n}
    =
    \frac{n'(p-1)}{2-p}.
\]
And since \eqref{eq:strong-estimeta-iii}, we easily proves the assertion for $q=n$.

\medskip
\noindent
\emph{Case \textup{(iv)}: $q>n$}

We first explain why we can't get an $L^\infty$ estimate directly. The main problem is the first term of $A_f(s)$. We obtain that
\begin{align}
    &s^{- \frac{1}{n'}}
    \|f\|_{L^1(\Omega)}^{2-p}
    A_f(s)^{p-1}
    \notag\\
    &\quad\leq
    s^{-\frac{1}{n'}}
    \left(
        M^{\frac{1}{q'}}\|f\|_{L^q(\Omega)}
    \right)^{2-p}
    \left(
        s^{\frac{1}{q'}}\|f\|_{L^q(\Omega)}
    \right)^{p-1}
    \notag\\
    &\quad=
    M^{\frac{2-p}{q'}}
    \|f\|_{L^q(\Omega)}
    s^{-\frac{1}{n'}+\frac{p-1}{q'}}.
\label{eq:singular-supercritical-head}
\end{align}
Set
\begin{equation}
\label{eq:singular-supercritical-delta}
    \delta_{p,q}
    :=
    \frac{1}{n'}-\frac{p-1}{q'}.
\end{equation}
Then \eqref{eq:singular-supercritical-head} becomes
\begin{equation}
\label{eq:singular-supercritical-head-delta}
    s^{-\frac{1}{n'}}
    \|f\|_{L^1(\Omega)}^{2-p}
    A_f(s)^{p-1}
    \leq
    C\|f\|_{L^q(\Omega)}
    s^{-\delta_{p,q}}.
\end{equation}
If $\delta_{p,q}>0$, then
\[
    s^{-\delta_{p,q}}\longrightarrow\infty
    \qquad\text{as }s\to0^+.
\]
Thus the right-hand side of
\eqref{eq:singular-supercritical-head} is not uniformly bounded on $(0,M)$. Consequently, it does not directly yield $G\in L^\infty(\Omega)$. Unless, we have
\[
\frac{p-1}{q'} \geq \frac{1}{n'}
\]
which means that
\begin{equation*}
    \delta_{p,q}=
    \frac{1}{n'}-\frac{p-1}{q'} \leq 0
\end{equation*}
and $s^{-\delta_{p,q}}$ is upper bounded on $(0,M)$. Hence we can get the same result as degenerate range.
\end{proof}

We first notice that \(\Phi\) is a Young function, since \(\frac{1}{p-1}>1\). Let
\[
    \widetilde \Phi(s)
    =
    \sup_{t\geq 0}\{st-\Phi(t)\},
    \qquad s\geq 0,
\]
be its complementary Young function. For a measurable set \(\Omega\) of
finite measure, we denote the modular associated with \(I\) by
\[
    I_{\Phi}(v)
    =
    \int_{\Omega} \Phi(|v(x)|)\,dx.
\]
The Luxemburg norm in \(L^\Phi(\Omega)\) is defined by
\[
    \|v\|_{L^\Phi}
    =
    \inf\left\{
        \lambda>0:
        I_{\Phi}\left(\frac{v}{\lambda}\right)\leq 1
    \right\},
\]
whereas the Orlicz norm is defined by
\[
    \|v\|_{O^\Phi}
    =
    \sup\left\{
        \int_D |v(x)z(x)|\,dx:
        I_{\widetilde \Phi}(z)\leq 1
    \right\}.
\]
The underlying Orlicz space is the same for the two norms.

\begin{corollary}[Anisotropic Orlicz and Zygmund space]
\label{rem:anisotropic-orlicz-zygmund}
Assume that \(2\leq p<n\), and let \(\Omega\), \(H\), \(f\), and \(u\)
satisfy the hypotheses of Theorem~1.1. 

\smallskip
\noindent
\emph{(i) Orlicz space.}
Let \(\Phi\) and \(\Psi\) be Young functions. Suppose that there exist
constants \(c>0\) and \(t_0>0\) such that
\begin{equation*}
    \Psi(t)\leq \Phi_n(ct)
    \qquad\text{and}\qquad
    \widetilde \Phi(t)\leq (\widetilde \Psi)_n(ct)
    \qquad\text{for every }t\geq t_0.
\end{equation*}
Define the Young function \(E\) by
\begin{equation*}
    E(t)=\Psi\bigl(t^{p-1}\bigr),
    \qquad t\geq0.
\end{equation*}
Then, for every \(f\in L^\Phi(\Omega)\),
\begin{equation*}
    \bigl\|H(\nabla u)\bigr\|_{L^E(\Omega)}
    \leq
    C\,
    \|f\|_{L^\Phi(\Omega)}^{\frac{1}{p-1}}.
\end{equation*}

\smallskip
\noindent
\emph{(ii) Zygmund space.}
In particular, let
\[
    f\in L^q(\log L)^\alpha(\Omega).
\]
The following estimates hold:
\begin{enumerate}
    \item[(a)] If \(q=1\) and \(\alpha>0\), then
    \begin{equation}
    \label{eq:anisotropic-zygmund-q1}
        \bigl\|H(\nabla u)\bigr\|_{
        L^{\frac{n(p-1)}{n-1}}
        (\log L)^{\frac{n\alpha}{n-1}-1}(\Omega)}
        \leq
        C\,
        \|f\|_{L(\log L)^\alpha(\Omega)}^{\frac{1}{p-1}}.
    \end{equation}

    \item[(b)] If \(1<q<n\) and \(\alpha\in\mathbb{R}\), then
    \begin{equation}
    \label{eq:anisotropic-zygmund-subcritical}
        \bigl\|H(\nabla u)\bigr\|_{
        L^{\frac{nq(p-1)}{n-q}}
        (\log L)^{\frac{n\alpha}{n-q}}(\Omega)}
        \leq
        C\,
        \|f\|_{L^q(\log L)^\alpha(\Omega)}^{\frac{1}{p-1}}.
    \end{equation}

    \item[(c)] If \(q=n\) and \(\alpha<n-1\), then
    \begin{equation}
    \label{eq:anisotropic-zygmund-critical}
        \bigl\|H(\nabla u)\bigr\|_{
        \exp L^{\frac{n(p-1)}{n-1-\alpha}}(\Omega)}
        \leq
        C\,
        \|f\|_{L^n(\log L)^\alpha(\Omega)}^{\frac{1}{p-1}}.
    \end{equation}

    \item[(d)] If \(q=n\) and \(\alpha=n-1\), then
    \begin{equation}
    \label{eq:anisotropic-zygmund-double-critical}
        \bigl\|H(\nabla u)\bigr\|_{
        \exp\left(\exp L^{\frac{n(p-1)}{n-1}}\right)(\Omega)}
        \leq
        C\,
        \|f\|_{L^n(\log L)^{n-1}(\Omega)}^{\frac{1}{p-1}}.
    \end{equation}

    \item[(e)] If either \(q=n\) and \(\alpha>n-1\), or
    \(q>n\) and \(\alpha\in\mathbb{R}\), then
    \begin{equation}
    \label{eq:anisotropic-zygmund-supercritical}
        \bigl\|H(\nabla u)\bigr\|_{L^\infty(\Omega)}
        \leq
        C\,
        \|f\|_{L^q(\log L)^\alpha(\Omega)}^{\frac{1}{p-1}}.
    \end{equation}
\end{enumerate}

\end{corollary}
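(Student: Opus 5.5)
The whole corollary will follow from the degenerate line of \Cref{thm:main} combined with \Cref{app:prop:ri-reduction}, applied with $X=L^\Phi(\Omega)$ and $Y=L^\Psi(\Omega)$. Since $2\le p<n$, \Cref{thm:main} gives
\[
\big([H(\nabla u)]^*(s)\big)^{p-1}\le C\bigl(s^{-\frac1{n'}}A_f(s)+B_f(s)\bigr),\qquad 0<s<m.
\]
The task therefore reduces to the two one-dimensional Hardy inequalities \eqref{eq:Af(s)1} and \eqref{eq:Bf(s)1} in the representation spaces $\overline{L^\Phi}(0,m)\to\overline{L^\Psi}(0,m)$.

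\emph{Part (i).} For the inequalities \eqref{eq:Af(s)1}--\eqref{eq:Bf(s)1} I would invoke the Cianchi characterisation of Orlicz--Sobolev (Riesz-potential) Hardy inequalities with kernel $r^{-1/n'}$. Here $\Phi_n$ denotes the Sobolev conjugate
\[
\Phi_n=\Phi\circ H_n^{-1},\qquad H_n(t)=\Bigl(\int_0^t\bigl(\tfrac{\tau}{\Phi(\tau)}\bigr)^{\frac1{n-1}}\,d\tau\Bigr)^{\frac1{n'}}.
\]
Cianchi's theorem states that the operator $f^*\mapsto s^{-1/n'}\int_0^sf^*$ is bounded $L^\Phi\to L^\Psi$ if and only if $\Psi$ is dominated by $\Phi_n$ near infinity. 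Likewise, its dual $f^*\mapsto\int_s^mf^*r^{-1/n'}dr$ is bounded if and only if $\widetilde\Phi$ is dominated by $(\widetilde\Psi)_n$. The hypothesis of the corollary is exactly these two dominations. The restriction $t\ge t_0$ is harmless because $|\Omega|<\infty$: small values are absorbed using $\Psi(t)\le\Psi(t_0)$ and the finite measure.

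This step yields $\|H(\nabla u)^{p-1}\|_{L^\Psi}\le C\|f\|_{L^\Phi}$. It remains to rewrite the left side as a norm of $H(\nabla u)$ in $L^E$ with $E(t)=\Psi(t^{p-1})$. At the modular level the identity $\int\Psi(H(\nabla u)^{p-1}/\lambda)=\int E(H(\nabla u)/\lambda^{1/(p-1)})$ gives directly
\[
\|H(\nabla u)\|_{L^E}=\|H(\nabla u)^{p-1}\|_{L^\Psi}^{\frac1{p-1}}.
\]
Since $p-1\ge1$, $E$ is again a Young function, so no equivalence constants are needed.

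\emph{Part (ii).} Each case (a)--(e) is obtained by choosing $\Phi(t)\simeq t^q\log^\alpha(e+t)$ near infinity and computing the optimal target $\Psi$ from (i).

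1. For $q=1$, $\alpha>0$, the two Hardy conditions give $\Psi(t)\simeq t^{n'}\log^{n'\alpha-1}t$ (the $-1$ comes from the dual condition, which is the binding one). Composing with $t^{p-1}$ gives the target $L^{n(p-1)/(n-1)}(\log L)^{n\alpha/(n-1)-1}$.
2. For $1<q<n$, one gets $\Psi(t)\simeq t^{q^\star}\log^{q^\star\alpha/q}t$, with $q^\star=nq/(n-q)$.
3. For $q=n$ and $\alpha<n-1$, $H_n$ grows like $\log^{(n-1-\alpha)/n}t$. This gives $\Psi=\exp(t^{n/(n-1-\alpha)})$, and composing with $t^{p-1}$ gives $\exp L^{n(p-1)/(n-1-\alpha)}$.
4. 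For $\alpha=n-1$, $H_n$ grows like $(\log\log t)^{1/n'}$, which produces the double exponential.
5. In the supercritical cases, $\int^\infty(\tau/\Phi(\tau))^{1/(n-1)}d\tau<\infty$, so $B_f$ and $s^{-1/n'}A_f$ are bounded. This yields $L^\infty$, as in \Cref{thm:anisotropic-lebesgue-estimates}(iv).

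\emph{Main obstacle.} The delicate step is verifying the second (dual) domination $\widetilde\Phi\le(\widetilde\Psi)_n$ in the borderline cases (a) and (d). There the target exponent of the logarithm is dictated by the dual Hardy operator $B_f$ rather than by $A_f$. I would check these cases directly on the operators, via weighted Hardy inequalities on $(0,m)$ in modular form, rather than through conjugate Young functions, if the asymptotic computation of $\widetilde\Psi$ becomes unwieldy.
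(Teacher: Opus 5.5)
Your proposal is correct and follows the route the paper sets up for this corollary. The paper states the corollary without a written proof, but your chain is exactly what its framework intends: the degenerate line of Theorem~\ref{thm:main}, then Corollary~\ref{app:prop:ri-reduction} with $X=L^\Phi$ and $Y=L^\Psi$, Cianchi's Orlicz characterisation of the two Hardy operators, and the exact Luxemburg identity $\|H(\nabla u)\|_{L^E}=\|H(\nabla u)^{p-1}\|_{L^\Psi}^{1/(p-1)}$. Your asymptotics in (a)--(e) are also right.

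One correction, which is harmless for the logic because both dominations are hypotheses: you have paired them with the wrong operators. The condition $\Psi\le\Phi_n$ governs $f^*\mapsto\int_s^m f^*(r)r^{-1/n'}\,dr$ (the Talenti/Sobolev operator). The condition $\widetilde\Phi\le(\widetilde\Psi)_n$ governs $f^*\mapsto s^{-1/n'}\int_0^s f^*$. To see this, note that for $q=n$ the latter operator maps $L^n$ into $L^\infty$, so it cannot require $\Psi\le\Phi_n$. Consequently, in (a) the binding operator is $A_f$, not $B_f$. In (d) the binding condition is $\Psi\le\Phi_n$, and the dual domination holds with room to spare, so your ``main obstacle'' paragraph should be redirected accordingly.
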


The elliptic results also apply to solutions of the parabolic
equation subject to the additional constraint
\(\partial_t u=\beta(u)\). The following lemma makes this
observation precise.

\begin{corollary}[An application to a class of parabolic equations]
\label{lem:elliptic-parabolic-transfer}
Assume that \(2\leq p<n\), let \(\Omega\subset\mathbb{R}^n\) be open, let \(I=(0,T)\), and
set \(Q=\Omega\times I\). Let
\(\beta:\mathbb{R}\to\mathbb{R}\) be a Borel function.
Suppose that
\[
u,\ f,\ \beta(u)\in L^1_{\mathrm{loc}}(Q)
\]
and that
\begin{equation}
\label{eq:constrained-parabolic-system}
 -\Delta_p^H u +\beta(u) = f,
\qquad
\beta(u) = \partial_t u
\qquad\text{in }\mathcal{D}'(Q).
\end{equation}
Then, for almost every \(t\in I\) there hold the same results as Corollary \ref{thm:anisotropic-lebesgue-estimates}.
\end{corollary}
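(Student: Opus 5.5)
The approach is to freeze the time variable and reduce, slice by slice, to the elliptic setting of Theorem~\ref{thm:anisotropic-lebesgue-estimates}. The key observation is that the two equations in \eqref{eq:constrained-parabolic-system} combine to
\[
 -\Delta_p^H u+\partial_t u=f \quad\text{in }\mathcal D'(Q),
\]
but the constraint $\partial_t u=\beta(u)$ also lets us keep the elliptic form $-\Delta_p^H u(\cdot,t)+\beta(u(\cdot,t))=f(\cdot,t)$ on a.e.\ slice. So the plan is to show that for a.e.\ $t\in I$ the function $u(\cdot,t)$ is the SOLA of \eqref{eq:poisson} with datum $f(\cdot,t)\in L^1(\Omega)$. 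Then Theorem~\ref{thm:main} (first line of \eqref{eq:Rp}) and hence all four items of Theorem~\ref{thm:anisotropic-lebesgue-estimates} apply, with constants independent of $t$.

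\emph{Step 1 (slicing).} By Fubini, for a.e.\ $t$ we have $u(\cdot,t)$, $f(\cdot,t)$ and $\beta(u(\cdot,t))$ in $L^1_{\mathrm{loc}}(\Omega)$. Testing \eqref{eq:constrained-parabolic-system} with product functions $\varphi(x)\chi_k(t)$ should pass to slices: take $\chi_k$ an approximate identity at $t_0$, and use a countable dense family of $\varphi\in C_c^\infty(\Omega)$ together with Lebesgue points of the $L^1_{\mathrm{loc}}$ maps $t\mapsto\int u\varphi$, $t\mapsto\int\beta(u)\varphi$, $t\mapsto\int f\varphi$. This yields, for a.e.\ $t_0$,
\[
 -\Delta_p^H u(\cdot,t_0)+\beta(u(\cdot,t_0))=f(\cdot,t_0)\quad\text{in }\mathcal D'(\Omega).
\]
The term $\cA_H(\nabla u)$ must also be sliced. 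This requires $\cA_H(\nabla u)\in L^1_{\mathrm{loc}}(Q)$, which is implicit in making sense of $\Delta_p^H u$ distributionally. I would state this as part of the hypotheses, together with the boundary condition being inherited slicewise.

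\emph{Step 2 (identification with the SOLA).} We need to know that the distributional slice solution coincides with the SOLA of Definition~\ref{def:approximable}, so that Theorem~\ref{thm:main} applies. Here $\beta$ is only Borel, so I would additionally use the standing monotonicity and $\beta(0)=0$ assumptions of \eqref{eq:poisson}. I would then invoke the uniqueness results for SOLAs/entropy solutions quoted in Section~2 (\cite{ACMM,alvino2008nonlinear,prignet1995remarks}), applied to the slice datum $f(\cdot,t)$. This step is the main obstacle. A mere distributional solution with $L^1$ data need not be the SOLA without an entropy-type or truncation regularity condition such as $T_k(u(\cdot,t))\in W_0^{1,p}$. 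I would therefore first try to carry such a condition over from the space–time hypothesis by Fubini, and otherwise add it as an explicit assumption ("under the hypotheses stated in the corresponding application", as the introduction announces).

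\emph{Step 3 (conclusion).} Once $u(\cdot,t)$ is the SOLA for a.e.\ $t$, Theorem~\ref{thm:main} gives
\[
 [H(\nabla u(\cdot,t))]^*(s)^{p-1}\le C\bigl[s^{-1/n'}A_{f(\cdot,t)}(s)+B_{f(\cdot,t)}(s)\bigr],
\]
with $C$ depending only on $n,p,\lambda,\Lambda,\Omega$, and in particular not on $t$. The proof of Theorem~\ref{thm:anisotropic-lebesgue-estimates} then repeats verbatim on each slice. For instance, if $f(\cdot,t)\in L^q(\Omega)$ with $1<q<n$, one gets
\[
 \|H(\nabla u(\cdot,t))\|_{L^{nq(p-1)/(n-q)}}\le C\|f(\cdot,t)\|_{L^q}^{1/(p-1)},
\]
and similarly in cases (i), (iii) and (iv).
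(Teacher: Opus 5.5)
The paper states this corollary without proof, so there is nothing to compare your argument with. Judged on its own, the proposal fails at Step~2, and that failure cannot be repaired from the stated hypotheses, because the statement as written is false. Take $\Omega$ a ball, $\beta\equiv0$, $f\equiv0$ and $u(x,t)=x_1$. Then $\cA_H(\nabla u)=\cA_H(e_1)$ is constant, both equations in \eqref{eq:constrained-parabolic-system} hold in $\mathcal D'(Q)$, and all functions are locally integrable. Yet item (iv) of Theorem~\ref{thm:anisotropic-lebesgue-estimates} on any slice would give $H(e_1)\le C\|0\|_{L^q(\Omega)}^{1/(p-1)}=0$.

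There is nothing for Fubini to ``carry over''. The space--time hypotheses contain no boundary condition and no spatial Sobolev regularity of $u$ (not even enough to give $\cA_H(\nabla u)$ a meaning). They also impose no monotonicity on $\beta$ and no condition on $\Omega$ beyond openness. The corollary becomes true once you assume, for a.e.\ $t$, that $\Omega$ is bounded and satisfies \eqref{eq:domain}, that $\beta$ is nondecreasing with $\beta(0)=0$, and that $u(\cdot,t)$ is the SOLA of \eqref{eq:poisson} with \eqref{eq:dirichlet} and datum $f(\cdot,t)$. Under that assumption your Step~3 is the entire proof. Neither the constraint $\partial_t u=\beta(u)$ nor your combined equation $-\Delta_p^H u+\partial_t u=f$ is used anywhere, since the first equation in \eqref{eq:constrained-parabolic-system} is already the elliptic equation in $x$ for each fixed $t$.

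If you want to assume something weaker on the slices and deduce the SOLA identification, the criterion you propose is not adequate. The uniqueness results you want to invoke concern entropy/renormalized solutions, equivalently SOLAs. A distributional slice solution with $T_k(u(\cdot,t))\in W^{1,p}_0(\Omega)$ for all $k$ is not known to belong to that class. The reason is that testing the equation with $T_k(u)$ is not justified when $\cA_H(\nabla u)$ is merely integrable, which is exactly why those solution notions were introduced. What you must assume on a.e.\ slice is either the entropy inequality, or the renormalized formulation including the energy condition $\lim_{k\to\infty}\int_{\{k<|u|<k+1\}}H(\nabla u)^p\,\dd x=0$. In the finite-energy case $u(\cdot,t)\in W^{1,p}_0(\Omega)$, $f(\cdot,t)\in W^{-1,p'}(\Omega)$, the weak formulation is enough. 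Your Step~1 is correct once $\cA_H(\nabla_x u)\in L^1_{\mathrm{loc}}(Q)$ is assumed, but it is needed only if such a condition is imposed in space--time form rather than slicewise.
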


\smallskip

\vspace{5pt}

\noindent\textbf{Funding Information:}
The first author L. Chen was partly supported by National Natural Science Foundation of China (No. 12622108), a grant from Beijing Institute of Technology (No.
2022CX01002) 


 The third author Y. Yang was supported by Hebei Natural Science Foundation (No. A2025105003) and the Program of China Scholarship Council (No. 202506030179). 

\medskip

\noindent\textbf{Data availability statement:}
As this work is mathematical in nature, there are no associated datasets and all relevant information is available in this manuscript.

\medskip
\noindent
\textbf{Conflicts of Interest Statement:}
The authors have no conflicts of interest to declare.

%
%
%
%


\end{document}